\newcommand{\eps}{\varepsilon}
\renewcommand{\P}{\mathbb{P}}
\newcommand{\F}{\mathcal{F}}
\newcommand{\R}{\mathbb{R}}
\newcommand{\er}{\mathbb{R}}
\newcommand{\N}{\mathbb{N}}
\newcommand{\Q}{\mathbb{Q}}
\newcommand{\E}{\mathbb{E}}
\newcommand{\e}{\mathcal{E}}
\newcommand{\ind}[1]{\mathds{1}_{#1}}
\newcommand{\sgn}{\text{sgn}}
\def\erfc{\mathop{\mbox{\normalfont erfc}}\nolimits}
\newcommand{\stalpha}{\Theta_{\alpha}}
\newcommand{\stonehalf}{\Theta_{\text{\tiny{$\tfrac{1}{2}$}}}}
\newcommand{\lo}{l_\sigma(\alpha)}
\newcommand{\X}{\overline{X}}
\newcommand{\hX}{\widehat{X}}
\newcommand{\Xe}[1]{\overline{X}_{\eta(#1)}}
\newcommand{\Z}[1]{\overline{Z}_{#1}}
\newcommand{\dt}{\Delta t}
\newcommand{\ds}{\Delta s}
\newcommand{\dW}{\Delta W}
\newcommand{\dS}{D }
\newcommand{\xx}{\bar{x}(\alpha)}
\newcommand{\XX}{(X_t,0\leq t\leq T)}
\newcommand{\XXb}{(\X_t,0\leq t\leq T)}
\newcommand{\XXh}{(\hX_t,0\leq t\leq T)}
\newcommand{\GG}{(\g_{t},0\leq t\leq T)}
\newcommand{\g}{\Gamma}
\newcommand{\bb}{{\beta}}
\newcommand{\dmax}{\Delta_{\max}}
\newtheorem{theo}{Theorem}[section]
\newtheorem{lem}[theo]{Lemma}
\newtheorem{proposition}[theo]{Proposition}
\newtheorem{hypothesis}[theo]{Hypothesis}
\newtheorem{corollary}[theo]{Corollary}
\newtheorem{rem}[theo]{Remark} 
\numberwithin{equation}{section}
\begin{document}
\title{Strong convergence of the symmetrized {M}ilstein scheme for some {CEV}-like {SDE}s}

\author[1]{Mireille Bossy\thanks{email: mireille.bossy@inria.fr}}
\author[2]{H\'ector Olivero \thanks{email: holivero@dim.uchile.cl. Financially supported by: Proyecto Mecesup UCH0607, the Direcci\'on de Postgrado y Post\'itulo de la Vicerrector\'ia de Asuntos Acad\'emicos de la Universidad de Chile, the Instituto Franc\'es de Chile - Embajada de Francia en Chile, and the Center for Mathematical Modeling CMM.
}   }
\affil[1]{TOSCA Laboratory, INRIA Sophia Antipolis -- M\'editerran\'ee, France}
\affil[2]{Departamento de Ingenier\'ia Matem\'atica, Universidad de Chile, Chile}
\date{\today}

\maketitle
\begin{abstract}
In this paper we study the rate of convergence of a symmetrized version of the Milstein scheme applied to the solution of the  one dimensional SDE 
\begin{equation*}
X_t = x_0 + \int_{0}^t{b(X_s)ds}+\int_{0}^t{\sigma |X_s|^\alpha dW_s}, \;x_0>0,\;\sigma>0,\; \alpha\in[\tfrac{1}{2},1).
\end{equation*}
Assuming $b(0)/\sigma^2$ big enough, and $b$ smooth, we prove a strong rate of convergence of order one, recovering the classical result of Milstein for SDEs with smooth  diffusion coefficient.  In contrast with other recent results, our proof does not relies on Lamperti transformation, and it can be applied to a wide class of drift functions. On the downside, our hypothesis on the critical parameter value $b(0)/\sigma^2$ is more restrictive than others available in the literature.  Some numerical experiments and comparison with various other schemes complement our theoretical analysis that also applies for the simple projected Milstein scheme with same convergence rate. 
\end{abstract}

\section{Introduction and main result}
The Milstein scheme was introduced by Milstein in \cite{Milstein:1974uq} for  one dimensional  Stochastic Differential Equations (SDEs) having smooth diffusion coefficient. Introducing an appropriated correction term,   this scheme has better convergence rate for the strong error than  the classical Euler-Maruyama scheme. 
Typically, when the drift and diffusion coefficient of one dimensional SDE are twice continuously differentiable  with bounded derivatives, the Milstein scheme is of order one for strong error (see eg. Talay \cite{talay-96}) instead of one-half for the Euler-Maruyama scheme. 
This well-know fact produces remarks on blogs, internet forums, and software packages that sometimes recommend to use the  Milstein scheme  for  constant elasticity of variance (CEV) models in finance, or its extension with stochastic volatility as SABR model, (see e.g Delbaen and Shirakawa \cite{Delbaen-Shirakawa_02} and  Lions and Musiela \cite{Lions20071} for a discussion on the (weak) existence of such models); CEV are popular stochastic volatility models of the form
\[ dX_t = \mu X_t dt + \sigma X_t^\gamma dW_t\]
with $0<\gamma<1$.  
But the interesting fact in this story is that the rate of convergence of  the Milstein scheme, for such family of processes with $0<\gamma<1$ is not yet well studied, to the best of our knowledge. 

In this paper we establish  a rate of convergence result for a symmetrized version of the Milstein scheme applied to the solution of the one dimensional SDE
\begin{equation}\label{exactProcess}
X_t = x_0 + \int_{0}^t{b(X_s)ds}+\int_{0}^t{\sigma |X_s|^\alpha dW_s},
\end{equation}
where $x_0>0$, $\sigma>0$ and $\tfrac{1}{2}\leq \alpha < 1$. Of course  Equation \eqref{exactProcess} does not satisfies the hypothesis to apply the classical result of Milstein \cite{Milstein:1974uq}. In particular, the diffusion coefficient is only H\"older continuous whereas the classical hypothesis is to have a $\mathcal{C}^2$ diffusion coefficient. 

The main picture of our convergence rate result is that Milstein scheme stays of order one in the case of Equation \eqref{exactProcess}, but some attention must be paid to the values of $b(0)$, $\alpha$ and $\sigma$. 

There exist in the literature other strategies for the discretization of the solution to \eqref{exactProcess}. There are some results based on the Lamperti transformation of the equation,  for example, by Alfonsi \cite{Alfonsi:2005aa, Alfonsi:2013aa}, and by  Chassagneux, Jacquier and Mihaylov \cite{Chassagneux:2015aa}.  And also, there  some are results where the equation \eqref{exactProcess} is discretized directly, as in  Berkaoui, Bossy and Diop \cite{BERKAOUI:2008fk} or in Kahl and Jack\"{e}l \cite{Kahl:2006aa}. In the numerical experiments section,  we compare the symmetrized Milstein scheme with a selection of schemes proposed in the aforementioned references.  We also experiment the symmetrized Milstein scheme in a multilevel Monte Carlo application and we compare with other schemes.  

\medskip

In the whole paper, we work under the following basis-hypothesis:
\begin{hypothesis}\label{hypothesisH0}
The power parameter  $\alpha$ in the diffusion coefficient of Equation \eqref{exactProcess} belongs to $[\tfrac{1}{2},1)$. The drift coefficient  $b$ is Lipschitz with constant $K>0$, and is such that  $b(0)>0$. 
\end{hypothesis}

Hypothesis  \ref{hypothesisH0} is a classical assumption to ensure a unique strong solution valued in $\er^+$. We assume  it in all the forthcoming results of the paper,  without recall it explicitly.  
To state the convergence  result (see Theorem \ref{mainTheorem}), another Hypothesis \ref{hypo:2} will be added and discussed, that in particular  constrains the values $\alpha$,  $b(0)$ and $\sigma$. 

\subsection{The symmetrized Milstein scheme}\label{sec:defSchemeAndMainResult}

To complete our task we follow the ideas of Berkaoui, Bossy and Diop in \cite{BERKAOUI:2008fk} who analyze the rate of convergence of the strong error for the symmetrized Euler scheme applied to Equation \eqref{exactProcess}. Although, whereas they utilize an argument of change of time, we consider first a weighted $L^p(\Omega)$-error for which we prove a convergence result, and then we utilize this result to prove the convergence of the actual $L^p(\Omega)$-error.

We consider  $x_0>0$, $T>0$,  and $N\in\N$. We define the constant step size $\dt=T/N$ and $t_k=k\dt$. Over this discretization of the interval $[0,T]$ we define the Symmetrized Milstein Scheme (SMS) $(\X_{t_k},k=0,\ldots,N)$ by

$$ \X_{t_{k}} =\left\{ \begin{array}{ll}x_0,\mbox{ for }k=0, \\
 \left|\X_{t_{k-1}}+b(\X_{t_{k-1}})\dt+\sigma\X_{t_{k-1}}^\alpha(W_{t_k}-W_{t_{k-1}})
+\dfrac{\alpha\sigma^2}{2}\X_{t_{k-1}}^{2\alpha-1}\left[(W_{t_k}-W_{t_{k-1}})^2 -\dt \right]\right|,  \\
\qquad\mbox{ for }k =1,\ldots,N.
 \end{array}
\right.
$$
In the following, we use the time continuous version of the SMS, $(\X_t, 0\leq t\leq T)$ satisfying 
\begin{align}\label{defXbarContinuous}
\X_t = \Big|\Xe{t}+b(\Xe{t})(t-\eta(t))  &+\sigma\Xe{t}^\alpha(W_t-W_{\eta(t)})+\frac{\alpha\sigma^2}{2}\Xe{t}^{2\alpha-1}\left[(W_t-W_{\eta(t)})^2 -(t-\eta(t)) \right]\Big|,
\end{align}
where $\eta(t)=\sup_{k\in\{1,\ldots,N\}}\{t_k:t_k\leq t\}$. We also introduce the increment process $(\Z{t}, 0\leq t\leq T)$ defined by
\begin{align}\label{defZ}
\Z{t} = \Xe{t}+b(\Xe{t})(t-\eta(t))&+\sigma\Xe{t}^\alpha(W_t-W_{\eta(t)})+\frac{\alpha\sigma^2}{2}\Xe{t}^{2\alpha-1}\left[(W_t-W_{\eta(t)})^2 -(t-\eta(t)) \right],
\end{align}
so that  $\X_t=|\Z{t}|$. Thanks to Tanaka's Formula, the semi-martingale decomposition of $\X_t$ is given by
\begin{align}\label{XbarIsSemiMartingale}
\X_t = x_0+ \int_0^t{\sgn(\Z{s})b(\Xe{s})ds} + L_t^0(\X) +\int_0^t\sgn(\Z{s})\left[\sigma\Xe{s}^\alpha+\alpha\sigma^2\Xe{s}^{2\alpha-1}(W_s-W_{\eta(s)})\right]dW_s
\end{align}
where $\sgn(x)  = 1 - 2 \ind{[x\leq 0]}$.

\subsubsection*{Moment upper bound estimations for $X$ and $\X$}
We summarize some facts about the process $\XX$, the proofs of which can be found in  Bossy and Diop~\cite{BOSSY:2013fk}.
 
\begin{lem}\label{lem:LemmaMomentsOfX}
For any $q\geq 1$, there exists a positive constant $C$ depending on $q$, but also on the parameters $b(0)$, $K$, $\sigma$, $\alpha$ and $T$ such that, for any $x_0 >  0$, 
\begin{equation}\label{boundedMomentsOfX}
\E\Big[\sup_{0\leq t\leq T}X_t^{2q} \Big]\leq C(1+x_0^{2q}).
\end{equation}
When $\tfrac{1}{2}<\alpha<1$, for any $q>0$,
\begin{equation} \label{boundedNegativeMomentsOfX}
\sup_{0\leq t\leq T} \E\left[ X_t^{-q} \right]\leq C(1+x_0^{-q}).
\end{equation}
When $\alpha=\tfrac{1}{2}$, for any $q$ such that $1<q<\tfrac{2b(0)}{\sigma^2} -1$,
\begin{equation} \label{boundedNegativeMomentsOfXcir}
\sup_{0\leq t\leq T} \E\left[ X_t^{-q} \right]\leq Cx_0^{-q}.
\end{equation}
\end{lem}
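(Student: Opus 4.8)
The plan is to treat the three estimates by applying It\^o's formula to suitable powers of $X_t$ and reducing each to a Gronwall argument; the positive moments are essentially routine, while the negative moments require a careful analysis of the competition between the drift and diffusion singular terms near the origin, which is precisely where the condition on $b(0)/\sigma^2$ enters.

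For \eqref{boundedMomentsOfX}, since $X_t\geq 0$ one has $|X_t|^\alpha=X_t^\alpha$, and It\^o's formula applied to $x\mapsto x^{2q}$ gives
\begin{align*}
X_t^{2q}=x_0^{2q}+\int_0^t\Big(2q X_s^{2q-1}b(X_s)+q(2q-1)\sigma^2 X_s^{2q-2+2\alpha}\Big)ds+\int_0^t 2q\sigma X_s^{2q-1+\alpha}dW_s.
\end{align*}
Using that $b$ is Lipschitz, hence $|b(x)|\leq b(0)+Kx$, and that $2q-2+2\alpha<2q$ because $\alpha<1$, Young's inequality bounds the drift integrand by $C(1+X_s^{2q})$. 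Taking expectations, discarding the martingale term after a standard localization and applying Gronwall's lemma yields $\sup_{0\leq t\leq T}\E[X_t^{2q}]\leq C(1+x_0^{2q})$. To upgrade this to the supremum inside the expectation, I would apply the Burkholder--Davis--Gundy inequality to the stochastic integral, bound its bracket by $\big(\sup_{s}X_s^{2q}\big)^{1/2}\big(\int_0^T X_s^{2q-2+2\alpha}ds\big)^{1/2}$, and absorb the resulting factor $\E[\sup_s X_s^{2q}]$ into the left-hand side via Young's inequality; the remaining term is controlled by the bound on $\sup_t\E[X_t^{2q}]$ just established.

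For the negative moments I would apply It\^o's formula to $x\mapsto x^{-q}$ on the event where $X$ stays away from $0$, working with the stopping times $\tau_\eps=\inf\{t:X_t\leq\eps\}$ and letting $\eps\downarrow 0$ with Fatou's lemma at the end (the strict positivity of $X$ needed here follows from $b(0)>0$, and in the regime $\alpha=\tfrac12$ from the constraint on $q$, which forces $2b(0)/\sigma^2>2$). The generator of $X^{-q}$ equals
\begin{align*}
\mathcal{A}X^{-q}=-qX^{-q-1}b(X)+\tfrac12 q(q+1)\sigma^2 X^{-q-2+2\alpha}.
\end{align*}
Writing $b(X)\geq b(0)-KX$ isolates the stabilizing term $-qb(0)X^{-q-1}$ and produces the harmless contribution $qKX^{-q}$, so that $\mathcal{A}X^{-q}\leq -qb(0)X^{-q-1}+qKX^{-q}+\tfrac12 q(q+1)\sigma^2 X^{-q-2+2\alpha}$.

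The crux is the comparison of the two singular exponents $-q-1$ and $-q-2+2\alpha$ as $X\to 0$. When $\tfrac12<\alpha<1$ we have $-q-2+2\alpha>-q-1$, so the drift term is the more singular one and dominates the diffusion term near the origin; a short computation (using $X^{2\alpha-1}\leq\eps X+C_\eps$ since $2\alpha-1<1$) shows $\mathcal{A}X^{-q}\leq C X^{-q}$ for all $X>0$, whence $\tfrac{d}{dt}\E[X_t^{-q}]\leq C\,\E[X_t^{-q}]$ and Gronwall's lemma gives \eqref{boundedNegativeMomentsOfX}. When $\alpha=\tfrac12$ both singular terms carry the exponent $-q-1$ and combine into $q\big(\tfrac{(q+1)\sigma^2}{2}-b(0)\big)X^{-q-1}$; this coefficient is negative \emph{exactly} under the hypothesis $q<\tfrac{2b(0)}{\sigma^2}-1$, which again yields $\mathcal{A}X^{-q}\leq qK X^{-q}$ and the bound \eqref{boundedNegativeMomentsOfXcir}. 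I expect the main obstacle to be the rigorous justification of these formal computations --- namely the localization and the passage $\eps\downarrow0$ guaranteeing that $X$ never reaches $0$ and that It\^o's formula is legitimate despite the singularity of $x^{-q}$ at the origin --- rather than the differential inequalities themselves, whose structure transparently explains why the threshold $2b(0)/\sigma^2-1$ is sharp.
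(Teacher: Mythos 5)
The paper does not prove this lemma: it is quoted from Bossy and Diop \cite{BOSSY:2013fk}, so there is no internal proof to compare against. Your argument is the standard one and is correct: It\^o's formula applied to $x^{2q}$ (resp.\ $x^{-q}$ after localization at the level sets $\{X\le\eps\}$), the Lipschitz bound $b(0)-Kx\le b(x)\le b(0)+Kx$, absorption of the singular diffusion term into the singular drift term (via the interpolation $X^{-q-2+2\alpha}\le \delta X^{-q-1}+C_\delta X^{-q}$ when $\alpha>\tfrac12$, and via the exact cancellation condition $q<2b(0)/\sigma^2-1$ when $\alpha=\tfrac12$), Gronwall, Fatou as $\eps\downarrow0$, and BDG plus Young to put the supremum inside the expectation for the positive moments. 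This is essentially the proof given in the cited reference. One small caveat: the threshold $q<2b(0)/\sigma^2-1$ is where \emph{this Lyapunov argument} closes, not the sharp integrability threshold --- for the CIR case the density near the origin behaves like $x^{2b(0)/\sigma^2-1}$, so negative moments actually exist up to order $2b(0)/\sigma^2$; your closing remark about sharpness should be softened, but this does not affect the validity of the proof of the stated lemma.
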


\begin{lem}\label{LemmaExponentialMomentOfX}
Let $\XX$ be the solution of \eqref{exactProcess} with $\tfrac{1}{2} < \alpha < 1$. For all $\mu\geq0$, there exists a positive constant $C(T,\mu)$, increasing in $\mu$ and $T$, depending also on $b,\; \sigma,\;\alpha$ and $x_0$ such that
\begin{equation}\label{eqExponentialMomentsExactProcess}
\E\exp\left( \mu \int_{0}^T{\frac{ds}{X_s^{2(1-\alpha)}}} \right)\leq C(T,\mu).
\end{equation}
When $\alpha=\tfrac{1}{2}$, the inequality \eqref{eqExponentialMomentsExactProcess} holds if $ b(0) > \tfrac{\sigma^2}{2}$ and $\mu\leq \tfrac{\sigma^2}{8}(\tfrac{2b(0)}{\sigma^2}-1)^2$.
\end{lem}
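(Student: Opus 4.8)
The plan is to bound the exponential functional by an explicit Lyapunov (super\-solution) function combined with the super\-martingale property, rather than by estimating the terms of its series expansion. Write $A_t=\int_0^t X_s^{-2(1-\alpha)}\,ds$, which is finite almost surely since $\E A_T=\int_0^T\E[X_s^{-2(1-\alpha)}]\,ds<\infty$ by \eqref{boundedNegativeMomentsOfX}. I would look for a constant $\nu=\nu(\mu)\geq0$ and a function $g\in\mathcal{C}^2((0,\infty))$ with $g>0$ and $\inf_{x>0}g(x)>0$ satisfying the differential inequality
\begin{equation*}
\frac{\sigma^2}{2}x^{2\alpha}g''(x)+b(x)g'(x)+\mu\,x^{2\alpha-2}g(x)\leq\nu\,g(x),\qquad x>0.
\end{equation*}
Given such a pair $(g,\nu)$, applying It\^o's formula to $Y_t=\exp(\mu A_t-\nu t)g(X_t)$ and using \eqref{exactProcess} shows that the finite–variation part of $Y$ is nonincreasing, so $Y$ is a nonnegative local super\-martingale, hence a genuine super\-martingale; after localizing with the stopping times $\inf\{t:X_t\leq 1/n\}\wedge\inf\{t:A_t\geq n\}$ and invoking Fatou's lemma, this yields $\E[\exp(\mu A_T)g(X_T)]\leq e^{\nu T}g(x_0)$. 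Dividing by $\inf g>0$ gives the claim with $C(T,\mu)=e^{\nu T}g(x_0)/\inf g$, increasing in $T$ and, through $\nu(\mu)$, in $\mu$.

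The heart of the matter is the construction of $g$, and this is where the two regimes separate. Near $x=0$ the dangerous term is the potential $\mu x^{2\alpha-2}g$, which must be compensated by the drift through $b(x)g'(x)$ using $b(0)>0$. When $\tfrac12<\alpha<1$ the exponent $2\alpha-2$ exceeds $-1$, so $\int_0 x^{2\alpha-2}\,dx<\infty$; choosing $g$ decreasing near the origin with $b(0)g'(x)\approx-\mu x^{2\alpha-2}$ makes $g$ vary by only a finite, controllable amount down to the boundary, so $g$ can be kept bounded away from zero for \emph{every} $\mu\geq0$. For large $x$ one uses the Lipschitz bound on $b$ together with the decay $x^{2\alpha-2}\to0$ of the potential, while the slack term $-\nu g$ absorbs the residual positivity on the intermediate range. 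The constant $\nu$ is genuinely needed: at the interior minimum of $g$ one has $g'=0$ and $g''\geq0$, so the positive potential cannot be offset by the drift and $\nu$ must be taken large enough there. This is also why no $T$–uniform bound can hold, since ergodic considerations make $A_T$ grow linearly in $T$.

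The borderline case $\alpha=\tfrac12$ is more delicate because $\int_0 x^{-1}\,dx$ diverges and the crude balance above fails. Here I would instead match $g(x)\sim x^{r}$ near the origin; substituting into the left–hand side, all three terms scale like $x^{r-1}$ and the admissible exponents solve the indicial equation $r^2+(\tfrac{2b(0)}{\sigma^2}-1)r+\tfrac{2\mu}{\sigma^2}=0$. A positive local super\-solution exists precisely when this equation has real roots, i.e. when its discriminant $(\tfrac{2b(0)}{\sigma^2}-1)^2-\tfrac{8\mu}{\sigma^2}$ is nonnegative; this is exactly the stated condition $b(0)>\tfrac{\sigma^2}{2}$ together with $\mu\leq\tfrac{\sigma^2}{8}(\tfrac{2b(0)}{\sigma^2}-1)^2$, beyond which the roots become complex and the candidate $g$ oscillates in sign. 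The same threshold can be read off more explicitly by applying It\^o's formula to $\log X_t$, giving $\tfrac{\sigma^2}{2}A_t=\log x_0-\log X_t+\int_0^t \tfrac{b(X_s)}{X_s}\,ds+\sigma\int_0^t X_s^{\alpha-1}\,dW_s$; estimating $\int_0^t b(X_s)/X_s\,ds\geq b(0)A_t-Kt$ and feeding this into the Dol\'eans--Dade super\-martingale $\exp\!\big(\lambda\int_0^\cdot X_s^{\alpha-1}dW_s-\tfrac{\lambda^2}{2}A_\cdot\big)$ produces a quadratic in $\lambda$ whose maximum over the admissible range $\lambda\in(-\sigma(\tfrac{2b(0)}{\sigma^2}-1),0)$ is attained at $\lambda^\ast=-\tfrac{\sigma}{2}(\tfrac{2b(0)}{\sigma^2}-1)$ and equals $\mu^\ast=\tfrac{\sigma^2}{8}(\tfrac{2b(0)}{\sigma^2}-1)^2$.

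I expect the construction and verification of $g$ — the matching of the singular behaviour at the origin with the far field, and in particular keeping $\inf g>0$ while controlling the intermediate region — to be the main technical obstacle; everything else is localization, It\^o calculus, and, for $\alpha=\tfrac12$, a final H\"older splitting of $X_T^{\lambda/\sigma}\exp(\mu A_T)$ against the bounded positive moments \eqref{boundedMomentsOfX} to remove the leftover boundary factor.
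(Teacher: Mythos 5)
The paper itself does not prove this lemma: it is imported from Bossy and Diop \cite{BOSSY:2013fk}, where the case $\alpha=\tfrac12$ is obtained essentially by the ``more explicit'' computation you relegate to a side remark --- It\^o's formula for $\log X_t$, the bound $\int_0^t b(X_s)/X_s\,ds\ge b(0)A_t-Kt$ with $A_t=\int_0^tX_s^{-2(1-\alpha)}ds$, and the supermartingale property of the Dol\'eans--Dade exponential of $\lambda\int_0^\cdot X_s^{\alpha-1}dW_s$, whose bracket is exactly $A_t$; optimizing the resulting quadratic in $\lambda$ gives $\lambda^\ast=-\tfrac{\sigma}{2}(\tfrac{2b(0)}{\sigma^2}-1)$ and the stated threshold $\mu^\ast=\tfrac{\sigma^2}{8}(\tfrac{2b(0)}{\sigma^2}-1)^2$, exactly as you say. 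Your primary route --- a positive supersolution $g$ of $\tfrac{\sigma^2}{2}x^{2\alpha}g''+bg'+\mu x^{2\alpha-2}g\le\nu g$ with $\inf g>0$, turned into the nonnegative supermartingale $e^{\mu A_t-\nu t}g(X_t)$ --- is a genuinely different and arguably more unified packaging: it treats both regimes by one mechanism, explains structurally why $\alpha>\tfrac12$ requires no restriction on $\mu$ (the potential $x^{2\alpha-2}$ is integrable at $0$, so $b(0)g'$ can absorb it while $g$ varies only by a finite amount), and at $\alpha=\tfrac12$ the indicial equation $r^2+(\tfrac{2b(0)}{\sigma^2}-1)r+\tfrac{2\mu}{\sigma^2}=0$ reproduces the sharp threshold, including the endpoint $\mu=\mu^\ast$ through the double root. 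The verification you defer does go through: with $g'\sim -cx^{2\alpha-2}$ near the origin the diffusion term $\tfrac{\sigma^2}{2}x^{2\alpha}g''$ is of order $x^{2\alpha-1}\cdot x^{2\alpha-2}=o(x^{2\alpha-2})$, so it cannot upset the leading balance, and patching to a constant far field only costs a larger $\nu$.

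Two caveats. First, the ``final H\"older splitting'' you propose for $\alpha=\tfrac12$ is circular as stated: applying H\"older to $\E\bigl[X_T^{|\lambda|/\sigma}\,\mathcal{E}_T(\lambda)\bigr]$ raises the stochastic exponential to a power $q'>1$ and thereby reintroduces a factor $\exp\bigl(\tfrac{(q'^2-q')\lambda^2}{2}A_T\bigr)$, which is the very quantity being estimated; one must instead work at a suboptimal $\lambda$ (losing the endpoint $\mu=\mu^\ast$) or pass to the Girsanov measure and bound the moment of $X_T$ there. This does not damage your proof, since the supersolution route already covers $\mu=\mu^\ast$, but it should not be presented as an independent derivation of the full statement. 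Second, the localization at $\inf\{t:X_t\le 1/n\}$ implicitly uses that $X$ never reaches $0$ (Feller's test, which is where $b(0)>0$, resp.\ $b(0)>\sigma^2/2$, enters), and this should be said explicitly since $g$, $g'$, $g''$ blow up at the origin when $\alpha=\tfrac12$; granted that, nonnegativity of the local supermartingale plus Fatou closes the argument as you indicate.
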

Notice that the condition $b(0) > \sigma^2/2$ is also imposed by the Feller test in the case $\alpha =\tfrac{1}{2}$ for the strict positivity of $X$, that allows to rewrite Equation \eqref{exactProcess} as 
\begin{align*}
X_t = x_0 + \int_{0}^t{b(X_s)ds}+\int_{0}^t \sigma \sqrt{X_s} dW_s.
\end{align*}

Using the semimartingale representation \eqref{XbarIsSemiMartingale}, we prove the following Lemma regarding the existence of moments of any order for $\X_t$. 
\begin{lem}\label{lem:finitenessOfTheMomentsOfXbar}
For any $q\geq 1$, there exists a positive constant $C$ depending on $q$, but also on the parameters $b(0)$, $K$, $\sigma$, $\alpha$ and $T$ such that for any $x_0 >  0$, 
$$\E\Big[\sup_{t\in[0,T]}\X_t^{2q} \Big]\leq C(1+x_0^{2q}).$$  
\end{lem}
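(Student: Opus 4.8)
The plan is to prove moment bounds that hold \emph{pointwise in time} and at \emph{every} order first, and only at the very end to move the supremum inside the expectation. Two elementary observations drive the argument. First, since $\X_t=|\Z{t}|$ and $2q$ is even, $\X_t^{2q}=\Z{t}^{2q}$, so at the grid points the scheme is a plain recursion whose coefficients grow at most linearly: under Hypothesis~\ref{hypothesisH0} one has $|b(x)|\le b(0)+Kx$, while $x^{\alpha}\le 1+x$ and $x^{2\alpha-1}\le 1+x$ for $x\ge0$ because $\alpha\in[\tfrac12,1)$. Second, applying Itô's formula to $x\mapsto x^{2q}$ along the decomposition \eqref{XbarIsSemiMartingale}, the local-time contribution is $2q\int_0^t\X_s^{2q-1}\,dL_s^0(\X)$, which is identically zero: $L^0(\X)$ increases only on $\{\X_s=0\}$ and there $\X_s^{2q-1}=0$ (this is where $q\ge1$ is used). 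Thus the single nonsmooth feature of the scheme disappears and one is left with a standard-looking Itô expansion.

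The first step is to show that for every $p\ge1$, $\sup_{0\le t\le T}\E[\X_t^{2p}]\le C_p(1+x_0^{2p})$. Writing $\Z{t_{k+1}}=A_k+M_k$ with $A_k=\X_{t_k}+b(\X_{t_k})\dt$ being $\F_{t_k}$-measurable and $M_k=\sigma\X_{t_k}^{\alpha}\,\Delta W+\tfrac{\alpha\sigma^2}{2}\X_{t_k}^{2\alpha-1}(\Delta W^2-\dt)$ for the increment $\Delta W=W_{t_{k+1}}-W_{t_k}$, one has $\E[M_k\mid\F_{t_k}]=0$ and $\E[M_k^{2}\mid\F_{t_k}]\le C\dt(1+\X_{t_k}^{2})$. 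Expanding $(A_k+M_k)^{2p}$ by the binomial theorem, the $j=1$ term has zero conditional mean, while each term with $j\ge2$ carries a factor of at least $\dt$ after taking $\E[\,\cdot\mid\F_{t_k}]$; using $A_k\le(1+K\dt)\X_{t_k}+b(0)\dt$ and Young's inequality (linear growth ensures no exponent larger than $2p$ is ever created), this yields the one-step bound $\E[\X_{t_{k+1}}^{2p}\mid\F_{t_k}]\le(1+C\dt)\X_{t_k}^{2p}+C\dt$. The discrete Gronwall lemma then gives $\max_k\E[\X_{t_k}^{2p}]\le C(1+x_0^{2p})$, and the same expansion with $t-\eta(t)\le\dt$ extends the bound to all $t$. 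The conditioning on $\F_{\eta(t)}$ is what removes any correlation between $\Xe{t}$ and the fresh increment $W_t-W_{\eta(t)}$.

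With the pointwise bounds of all orders in hand, the supremum follows directly from \eqref{XbarIsSemiMartingale} without a second Gronwall argument. Taking $\sup_{u\le t}$ in the Itô expansion of $\X_u^{2q}$ and then expectations, the drift is controlled by $\E\int_0^t\X_s^{2q-1}(1+\Xe{s})\,ds$, the Itô correction by $\E\int_0^t\X_s^{2q-2}\big[\sigma\Xe{s}^{\alpha}+\alpha\sigma^2\Xe{s}^{2\alpha-1}(W_s-W_{\eta(s)})\big]^2ds$, and the martingale by the Burkholder--Davis--Gundy inequality, which produces $C\,\E\big[(\int_0^t\X_s^{4q-2}[\cdots]^2\,ds)^{1/2}\big]$ (a genuine martingale thanks to the finite moments just established). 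In each term I would bound the squared diffusion coefficient by $C(1+\Xe{s}^2)\big(1+(W_s-W_{\eta(s)})^2\big)$, split off the Brownian increment by the Cauchy--Schwarz inequality (its moments are bounded uniformly since $\mathrm{Var}(W_s-W_{\eta(s)})\le\dt$), and separate $\X_s$ from $\Xe{s}$ by Young's inequality. Because BDG doubles the polynomial degree, these manipulations leave expectations of $\X_s$ and $\Xe{s}$ of order up to $8q$; all of them are finite and bounded by $C(1+x_0^{2q})$ once the Phase~1 estimate is invoked at the appropriate order, so collecting the terms yields $\E[\sup_{0\le t\le T}\X_t^{2q}]\le C(1+x_0^{2q})$.

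The main obstacle is bookkeeping rather than conceptual: one must guarantee that the coefficient of $\X_{t_k}^{2p}$ in the one-step estimate is \emph{exactly} $1+O(\dt)$ and not a genuine constant $>1$, since otherwise the discrete Gronwall bound would degenerate like $C^{N}=C^{T/\dt}$ as the mesh refines. This forces a clean separation of the mean-zero Milstein increment $M_k$ from the $\F_{t_k}$-measurable part $A_k$, together with a systematic use of linear growth. The secondary difficulty, namely the coupling between the frozen value $\Xe{s}$, the living value $\X_s$, and the within-step increment $W_s-W_{\eta(s)}$, is precisely the reason for establishing the pointwise all-order bounds first: conditioning on $\F_{\eta(s)}$ decouples them in Phase~1, and Cauchy--Schwarz does so in Phase~2.
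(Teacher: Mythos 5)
Your argument is correct, but your Phase~1 takes a genuinely different route from the paper. The paper works entirely in continuous time: it localizes with $\tau_m=\inf\{t:\X_t\geq m\}$, applies It\^o's formula to $\X_{t\wedge\tau_m}^{2p}$ along the Tanaka decomposition \eqref{XbarIsSemiMartingale} (silently dropping the local-time term, which vanishes for the reason you state), bounds $\X_s^{2p}$ in terms of $\Xe{s}^{2p}$ and Gaussian increments via Young's inequality, and closes with a continuous Gronwall argument before letting $m\to\infty$. You instead exploit $\X_{t}^{2p}=\Z{t}^{2p}$ to run a purely discrete one-step recursion: splitting $\Z{t_{k+1}}=A_k+M_k$ with $A_k\in\F_{t_k}$ and $M_k$ conditionally centered, a binomial expansion gives $\E[\X_{t_{k+1}}^{2p}\mid\F_{t_k}]\leq(1+C\dt)\X_{t_k}^{2p}+C\dt$ and discrete Gronwall finishes. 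Your route buys you two things: no stopping-time localization is needed (the one-step conditional bound is unconditional), and Tanaka's formula and the local time never enter Phase~1 at all; the price is the combinatorial bookkeeping you correctly flag, namely that the coefficient of $\X_{t_k}^{2p}$ must be $1+O(\dt)$. Both proofs then pass the supremum inside via BDG, a step the paper only sketches. One point in your Phase~2 is loose: if you estimate the BDG term by brute-force Cauchy--Schwarz and invoke the pointwise bound at order $8q$, you obtain $C(1+x_0^{8q})^{1/2}\cdot(\dots)$ and the final bound degrades to $C(1+x_0^{4q})$ rather than the stated $C(1+x_0^{2q})$. To preserve the exponent you should instead write $\bigl(\int_0^t\X_s^{4q-2}(\Sigma_s')^2ds\bigr)^{1/2}\leq\eps\sup_{s\leq t}\X_s^{2q}+C_\eps\int_0^t\X_s^{2q-2}(\Sigma_s')^2ds$ and absorb the supremum into the left-hand side; this is standard and does not affect the validity of the argument, only the form of the constant.
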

The proof of this lemma is based on the  Lipschitz property of $b$ and classical combination of It{\^o} formula and Young Inequality. For the sake of completeness, we give a short proof in the Appendix. 

\subsection{Strong rate of convergence}
The main result of this works is the strong convergence at rate one of the SMS $\X$ to the exact process $X$. The convergence holds in $L^{p}$ for $p\geq 1$.  To state it, we add to Hypothesis \ref{hypothesisH0} the following. \\
For any $x$ in $\er^+$, we denote $\lceil x \rceil$  the rounded up integer. 

\begin{hypothesis}\label{hypo:2}

\item{(i)} Let $p\geq1$. To control the $L^p(\Omega)$-norm of the error, if $\alpha>\tfrac{1}{2}$ we assume $b(0)>2\alpha(1-\alpha)^2\sigma^2$. Whereas for $\alpha = \tfrac{1}{2}$ we assume  $b(0)> 3( 2[p\lor2]+1)\sigma^2/2$.

\item{(ii)} The drift coefficient  $b$ is of class $\mathcal{C}^2(\R)$, and $b''$ has polynomial growth.
\end{hypothesis}

We now state our main theorem. 
To lighten the notation, we consider  for $\alpha\in(\tfrac{1}{2},1)$ 
\begin{align}\label{def:b_alpha}
\left\{
\begin{aligned}
b_\sigma(\alpha)&:=b(0)-2(1-\alpha)^2\alpha\sigma^2,\\
K(\alpha)& := K + \frac{\alpha\sigma^2}{2}(2\alpha-1)[2(1-\alpha)]^{-\frac{2(1-\alpha)}{2\alpha-1}}
\end{aligned}
\right.
\end{align}
and we extend this definitions to $\alpha=\tfrac{1}{2}$ taking limits. So, $b_\sigma(\tfrac{1}{2})=\lim_{\alpha\to\tfrac{1}{2}}b_\sigma(\alpha)=b(0)-\sigma^2/4$, and $K(\tfrac{1}{2})=\lim_{\alpha\to\tfrac{1}{2}}K(\alpha)=K$.  Notice that  $\lim_{\alpha\to1}K(\alpha)=K+\sigma^2/2$,  and since $K(\alpha)$ is continuous on $(\tfrac{1}{2},1)$, we have that $K(\alpha)$ is bounded. This is especially important in the definition of $\dmax(\alpha)$ bellow, because tells us that $\alpha \mapsto \dmax(\alpha)$ is strictly positive and bounded on $[\tfrac{1}{2},1)$.

\begin{theo}\label{mainTheorem}Assume Hypotheses \ref{hypothesisH0} and \ref{hypo:2}.  Define a maximum step size $\dmax(\alpha)$ as 
\begin{equation}\label{eq:def_dt0_alpha}
\dmax(\alpha) =  \dfrac{x_0}{(1-\sqrt{\alpha})b_\sigma(\alpha)} \wedge  \left\{\begin{array}{ll}
 \dfrac{1}{4\alpha K(\alpha)},\quad\mbox{ for } \alpha \in(\tfrac{1}{2},1)\\
 \dfrac{1}{4K}  \wedge x_0,\quad\mbox {for  }\alpha=\tfrac{1}{2}. 
\end{array}
\right.
\end{equation}
Let $\XX$ be the process defined on \eqref{exactProcess} and $\XXb$ the  symmetrized Milstein scheme given in \eqref{defXbarContinuous}. 
Then for  any $p\geq1$ that allows Hypotheses \ref{hypo:2}, there exists  a constant $C$ depending on $p$, $T$, $b(0)$, $\alpha$, $\sigma$, $K$,  and $x_0$, but not on $\dt$, such that for all $\dt\leq\dmax(\alpha)$, 
\begin{align}\label{eq:convergence}
\sup_{0\leq t\leq T}\Big( \E\left[|X_t-\X_t|^{p}\right]\Big)^{\frac{1}{p}}\leq C\dt.
\end{align}
\end{theo}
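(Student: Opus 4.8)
The plan is to derive a \emph{weighted} $L^p$-estimate for the error in which a multiplicative exponential weight absorbs the singularity produced by the non-Lipschitz diffusion coefficient, and then to recover the unweighted bound \eqref{eq:convergence} by H\"older's inequality combined with the exponential-moment estimate of Lemma~\ref{LemmaExponentialMomentOfX}. Set $e_t:=X_t-\X_t$ and $\Delta W_s:=W_s-W_{\eta(s)}$. Subtracting the Tanaka decomposition \eqref{XbarIsSemiMartingale} from \eqref{exactProcess}, $e_t$ is a continuous semimartingale with
\begin{align*}
de_s &= \big[b(X_s)-\sgn(\Z{s})\,b(\Xe{s})\big]\,ds - dL_s^0(\X)\\
&\quad + \big[\sigma|X_s|^\alpha-\sgn(\Z{s})\big(\sigma\Xe{s}^\alpha+\alpha\sigma^2\Xe{s}^{2\alpha-1}\Delta W_s\big)\big]\,dW_s .
\end{align*}
Writing $D_s$ for the bracket multiplying $dW_s$, the whole difficulty is concentrated there: by the mean value theorem $\sigma(X_s^\alpha-\Xe{s}^\alpha)=\alpha\sigma\,\xi_s^{\alpha-1}(X_s-\Xe{s})$ for some $\xi_s$ between $X_s$ and $\Xe{s}$, so $D_s$ carries a term of size $\alpha\sigma\,\xi_s^{-(1-\alpha)}e_s$ whose square feeds the singular quantity $\alpha^2\sigma^2\,X_s^{-2(1-\alpha)}e_s^2$ into the It\^o correction.

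Next I introduce, for a constant $\lambda>0$ to be fixed, the weight
\begin{equation*}
\g_s := \exp\!\Big(-\lambda\int_0^s \frac{du}{X_u^{2(1-\alpha)}}\Big),
\end{equation*}
which is exactly the functional integrated in Lemma~\ref{LemmaExponentialMomentOfX}, and I study $\E[\g_t|e_t|^{p}]$ for $p\ge 2$ (the range $1\le p<2$ then follows by Jensen's inequality). Applying It\^o's formula to $\g_t|e_t|^p$ (legitimate since $x\mapsto|x|^p$ is $\mathcal{C}^2$ for $p\ge2$), taking expectations, and using that the $dW$-integral is a true martingale thanks to the moment bounds of Lemmas~\ref{lem:LemmaMomentsOfX} and~\ref{lem:finitenessOfTheMomentsOfXbar}, the drift splits into three pieces. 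The local-time contribution $-\E\int_0^t\g_s\,p|e_s|^{p-1}\sgn(e_s)\,dL_s^0(\X)$ has a favorable sign and is dropped: on the support of $dL^0(\X)$ one has $\X_s=0$, hence $e_s=X_s\ge 0$ and $\sgn(e_s)=1$. The It\^o correction $\tfrac{p(p-1)}{2}\g_s|e_s|^{p-2}D_s^2$ is estimated through the mean value bound of the first paragraph together with $\xi_s^{-2(1-\alpha)}\le X_s^{-2(1-\alpha)}+\Xe{s}^{-2(1-\alpha)}$; choosing $\lambda$ larger than a fixed multiple of $\alpha^2\sigma^2$, the leading singular term $\alpha^2\sigma^2 X_s^{-2(1-\alpha)}|e_s|^p$ is \emph{exactly cancelled} by the weight-induced drift $-\lambda X_s^{-2(1-\alpha)}\g_s|e_s|^p$.

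What then remains are the drift difference and the genuinely discrete remainders, which must be shown to be of order $\dt$ rather than $\sqrt{\dt}$. Here the design of the scheme is decisive: the Milstein correction $\tfrac{\alpha\sigma^2}{2}\Xe{s}^{2\alpha-1}[(\Delta W_s)^2-(s-\eta(s))]$ is tailored to remove, inside $D_s$, the $\Delta W_s$-linear term coming from the Taylor expansion of $\sigma(X_s^\alpha-\Xe{s}^\alpha)$ against the one-step increment $\X_s-\Xe{s}$ — precisely the half-order term responsible for the rate $1/2$ of the symmetrized Euler scheme. For the drift difference I split $b(X_s)-b(\X_s)$ (controlled by $K|e_s|$, feeding the Gronwall term) from $b(\X_s)-b(\Xe{s})$, which is expanded by It\^o's formula, using that $b\in\mathcal{C}^2$ with $b''$ of polynomial growth (Hypothesis~\ref{hypo:2}(ii)); the mean-zero $\Delta W_s$-parts are kept inside iterated integrals and bounded via the Burkholder--Davis--Gundy inequality, so they contribute at order $\dt$ and not $\sqrt{\dt}$. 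All residual factors carry singular negative powers of $X_s$ or $\Xe{s}$ and are tamed by Lemmas~\ref{lem:LemmaMomentsOfX}--\ref{lem:finitenessOfTheMomentsOfXbar}; the step-size restriction $\dt\le\dmax(\alpha)$ and the constant $K(\alpha)$ enter exactly here, to keep the resulting constants finite and the scheme non-degenerate. This yields $\E[\g_t|e_t|^p]\le C\dt^p+C\int_0^t\E[\g_s|e_s|^p]\,ds$, hence $\sup_{t\le T}\E[\g_t|e_t|^p]\le C\dt^p$ by Gronwall. Finally, for conjugate exponents $r,r'$,
\begin{equation*}
\E\big[|e_t|^p\big] = \E\big[\g_t^{-1}\,\g_t|e_t|^p\big] \le \big(\E[\g_t^{-r}]\big)^{1/r}\big(\E[\g_t^{r'}|e_t|^{pr'}]\big)^{1/r'},
\end{equation*}
so running the weighted estimate at exponent $pr'$ and bounding $\E[\g_t^{-r}]=\E\exp\!\big(\lambda r\int_0^t X_s^{-2(1-\alpha)}ds\big)$ by Lemma~\ref{LemmaExponentialMomentOfX} gives \eqref{eq:convergence}.

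The main obstacle is the tension in the choice of $\lambda$: it must be taken large (of order $\alpha^2\sigma^2$) to dominate the singular It\^o correction $\alpha^2\sigma^2 X_s^{-2(1-\alpha)}e_s^2$, yet small enough that $\E\exp\!\big(\lambda r\int_0^T X_s^{-2(1-\alpha)}ds\big)$ stays finite through the admissible threshold of Lemma~\ref{LemmaExponentialMomentOfX}. Reconciling these two constraints is exactly what pins down the lower bound on $b(0)/\sigma^2$ in Hypothesis~\ref{hypo:2}(i) and, at $\alpha=\tfrac12$, its dependence on $p$ (through the exponent $pr'$ needed in the H\"older step). Carrying this out uniformly in $t$ while simultaneously upgrading the discretization remainders from $\sqrt{\dt}$ to $\dt$ — exploiting the martingale structure of $\Delta W_s$ and the exact cancellation furnished by the Milstein term, all in the presence of the extra singular factor $\Xe{s}^{2\alpha-1}$ and the reflection local time — is the technically delicate core of the argument.
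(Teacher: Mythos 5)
Your overall architecture coincides with the paper's: an exponential weight built on $\int_0^t X_u^{-2(1-\alpha)}du$, a weighted Gronwall estimate, recovery of the unweighted bound through H\"older and Lemma \ref{LemmaExponentialMomentOfX}, an It\^o expansion of the drift increment to upgrade it from $\sqrt{\dt}$ to $\dt$, and the Milstein correction to do the same for the diffusion part. (Your sign argument for dropping the local-time term at the top level is correct and slightly cleaner than the paper's use of Lemma \ref{lem:Control2ndMomentLocalTime}, and applying H\"older once at the end against $\E[\Gamma_t^{-r}]$ is essentially equivalent to the paper's interleaved Cauchy--Schwarz between the $L^{2p}$ error and the $L^{4p}$ weighted error.) However, there is a concrete gap at the heart of your cancellation step. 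Bounding $\sigma(X_s^\alpha-\X_s^\alpha)$ by the mean value theorem produces $\alpha\sigma\,\xi_s^{-(1-\alpha)}|e_s|$ with $\xi_s$ between $X_s$ and $\X_s$, hence $\xi_s^{-2(1-\alpha)}\leq X_s^{-2(1-\alpha)}+\X_s^{-2(1-\alpha)}$. The second summand is fatal: the scheme $\X$ actually hits zero (that is the whole point of the symmetrization), so $\X_s^{-2(1-\alpha)}$ has no usable moment bounds, and more importantly a term $\X_s^{-2(1-\alpha)}\Gamma_s|e_s|^p$ cannot be cancelled by a weight built solely on the \emph{exact} process. The paper avoids this with the one-sided inequality \eqref{theAlgebraicTrick}, $|x^\alpha-y^\alpha|(x^{1-\alpha}+y^{1-\alpha})\leq 2\alpha|x-y|$, which places the entire negative power on $X_s$ alone; without this (or an equivalent device) your ``exact cancellation'' does not close.

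Two further steps are asserted rather than established, and they are exactly where the paper's restrictive hypotheses and most of its technical work live. First, the claim that after subtracting the Milstein correction the residual $\sigma\X_s^\alpha-\sigma\Xe{s}^\alpha-\alpha\sigma^2\Xe{s}^{2\alpha-1}\Delta W_s$ is of order $\dt$ in $L^{2p}$ is the content of Lemma \ref{lem:CorrectedLocalError}; its proof requires negative moments of the stopped increment process $\Z{\cdot\wedge\stalpha}$ and the stopping time \eqref{defStoppingTime}, and it is precisely the source of the condition $b(0)>3(2p+1)\sigma^2/2$ at $\alpha=\tfrac12$ — your sketch attributes that condition instead to the H\"older step, which is not where the binding constraint arises. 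Second, for the drift increment, expanding $b(\X_\cdot)-b(\Xe{\cdot})$ by It\^o along the \emph{scheme} reintroduces the local time, the sign function and the scheme's singular negative powers; the paper instead expands $b(X_{\eta(s)})-b(X_\cdot)$ along the exact process and, crucially, obtains the order-$\dt$ bound not from Burkholder--Davis--Gundy (which only yields $\sqrt{\dt}$ over a step) but from integration by parts on the product $\e_u^{4p-1}\big(b(X_{\eta(s)})-b(X_u)\big)$ so that the martingale increments pair with controlled quantities (Lemma \ref{lem:driftScaledError}). Finally, all terms carrying $\sgn(\Z{s})$ or $\ind{\{\Z{s}<0\}}$ are never addressed in your sketch; they require the exponential bound of Lemma \ref{lem:ZbarRemainsPositive}.
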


\paragraph{About Hypothesis \ref{hypo:2}. }   Notice that for $\alpha>\tfrac{1}{2}$, 
 Assumption $(i)$ does not depend on $p$ and becomes easier to fulfill as $\alpha$ increases.  On the other hand, for $\alpha=\tfrac{1}{2}$, Assumption $(i)$ depends on $p$ in a unpleasant manner. However, as we will see later in Section~\ref{sec:numExp} (see Table~ \ref{table:ConditionsOverParametersDifferentSchemesAlpha1/2}), this kind of dependence in $p$ is expected, and similar conditions are asked in the literature for other approximation schemes in order to obtain similar rate of convergence results. 

Also, notice that $(i)$ is a sufficient condition: in the numerical experiments we still observe a rate of convergence of order one for parameters that do not satisfy it, but we also observe that  for parameters such that $b(0)\ll\sigma^2$, although the convergence occurs, it does in a sublinear fashion. 

On the other hand, Assumption {$(ii)$} is the classical requirement for the strong convergence of the Milstein scheme. As we will see later in the proof of the main theorem, with the help of the Itô formula, this hypothesis let us conclude that 
$$\E\left[ |X_s -\X_s|^{2p-1}\big(b(X_{\eta(s)})-b(X_s)\big)\right]\leq C\left( \sup_{u\leq s} \E\left[|X_u -\X_u|^{2p}\right] + \dt^{2p}\right)$$ 
instead of 
$$\E\left[   |X_s -\X_s|^{2p-1}\big(b(X_{\eta(s)})-b(X_s)\big)\right]\leq C\left(\sup_{u\leq s}\E\left[|X_u -\X_u|^{2p}\right] + \dt^{p}\right), $$ 
which is the classical bound obtained for the Euler-Maruyama scheme under a Lipschitz condition for a drift $b$.

\medskip
The rest of the paper is organized as follow.  In Section \ref{sec:PreliminaryResults} we state some preliminary results on the scheme which will be building blocks in the proof of  Theorem \ref{mainTheorem}.   Section \ref{sec:theProof} is devoted to the proof of the convergence rate. 
 The main idea is first to introduce a weight process in the  $L^{4p}(\Omega)$-error. Wet get the rate of convergence for this weighted error process, and we use this intermediate bound to control the $L^{2p}(\Omega)$-error, from where we finally control the  $L^p(\Omega)$-error. Also, as a byproduct we obtain the order one convergence of the Projected Milstein Scheme (see \ref{sec:ConvergencePMS}). In  section \ref{sec:numExp}, we display some numerical experiments to show the effectiveness of the theoretical rate of convergence of the scheme, but also to test  Hypotheses \ref{hypo:2}-$(i)$ on a set of parameters.  In this section we also shows how the inclusion of the SMS in a Multilevel Monte Carlo framework could help to optimize  the computational  time of weak approximation of  assets valuation. In Section \ref{sec:ProofForPreliminaryResultsOnXbar} we present the proof of the preliminary results on the scheme. Finally we have included in a small appendix a couple of proofs to make this paper self contained. 

\section{Some preliminary results for $\X$}\label{sec:PreliminaryResults}

This short section is devoted to state some results about the behavior of $\X$, their proofs are postponed to Section~\ref{sec:ProofForPreliminaryResultsOnXbar}. All these results hold under  Hypothesis \ref{hypo:2}-$(i)$ which is in fact stronger than what we need here. So, we present the next lemmas with their minimal hypotheses (still assuming Hypothesis \ref{hypothesisH0}). 

\begin{lem}[Local error] \label{lem:localErrorOfTheScheme} For any {$x_0 >  0$}, for any {$p\geq 1$}, there exists a positive constant $C$, depending on $p$, $T$, the parameters of the model $b(0)$, $K$, $\sigma$, $\alpha$, but not on $\dt$ such that 
\[\sup_{0\leq t\leq T}\E\left[ |\X_t -\Xe{t}|^{2p}  \right]\leq C\dt^p.\]
\end{lem}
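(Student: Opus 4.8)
The plan is to avoid the local-time term in \eqref{XbarIsSemiMartingale} entirely and to work directly with the increment process $\Z{t}$. The crucial observation is that $\Xe{t}=\X_{\eta(t)}\geq 0$, since every value of the scheme is defined as an absolute value. Writing $\X_t=|\Z{t}|$ and applying the reverse triangle inequality with $\Xe{t}=|\Xe{t}|$,
\[
|\X_t-\Xe{t}| = \big||\Z{t}|-|\Xe{t}|\big| \leq |\Z{t}-\Xe{t}|,
\]
so it suffices to prove $\sup_{0\leq t\leq T}\E[|\Z{t}-\Xe{t}|^{2p}]\leq C\dt^p$. This reduces the estimate to a moment computation for an explicit sum of It\^o-type increments, sidestepping any analysis of $L^0_t(\X)$.

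Next I would read off from \eqref{defZ} the decomposition $\Z{t}-\Xe{t}=A_t+B_t+C_t$, where $A_t=b(\Xe{t})(t-\eta(t))$ is the drift increment, $B_t=\sigma\Xe{t}^\alpha(W_t-W_{\eta(t)})$ is the diffusion increment, and $C_t=\tfrac{\alpha\sigma^2}{2}\Xe{t}^{2\alpha-1}[(W_t-W_{\eta(t)})^2-(t-\eta(t))]$ is the Milstein correction. By the convexity inequality $|a+b+c|^{2p}\leq 3^{2p-1}(|a|^{2p}+|b|^{2p}+|c|^{2p})$, it is enough to bound $\E[|A_t|^{2p}]$, $\E[|B_t|^{2p}]$ and $\E[|C_t|^{2p}]$ separately.

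Each bound follows the same scheme: $\Xe{t}$ is $\F_{\eta(t)}$-measurable while the Brownian increment $W_t-W_{\eta(t)}$ is independent of $\F_{\eta(t)}$ and Gaussian with variance $t-\eta(t)\leq\dt$. For the drift term, the Lipschitz property of $b$ gives $|b(\Xe{t})|^{2p}\leq C(1+\Xe{t}^{2p})$, which combined with Lemma \ref{lem:finitenessOfTheMomentsOfXbar} and $t-\eta(t)\leq\dt$ yields $\E[|A_t|^{2p}]\leq C\dt^{2p}$. For the diffusion term, conditioning on $\F_{\eta(t)}$ and using $\E[|W_t-W_{\eta(t)}|^{2p}]=c_p(t-\eta(t))^p\leq c_p\dt^p$ together with the boundedness of $\E[\Xe{t}^{2p\alpha}]$ (from Lemma \ref{lem:finitenessOfTheMomentsOfXbar}, since $2p\alpha\leq 2p$) gives $\E[|B_t|^{2p}]\leq C\dt^p$. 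For the correction term, the centered variable $(W_t-W_{\eta(t)})^2-(t-\eta(t))$ satisfies $\E[|(W_t-W_{\eta(t)})^2-(t-\eta(t))|^{2p}]=c_p'(t-\eta(t))^{2p}\leq c_p'\dt^{2p}$, and with the boundedness of $\E[\Xe{t}^{2p(2\alpha-1)}]$ (again $0\leq 2p(2\alpha-1)\leq 2p$ because $\alpha\in[\tfrac12,1)$) this gives $\E[|C_t|^{2p}]\leq C\dt^{2p}$. Absorbing the higher-order terms via $\dt^{2p}\leq T^p\dt^p$ and summing the three contributions gives the claim.

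The reassuring feature is that for $\alpha\in[\tfrac12,1)$ both exponents appearing in $B_t$ and $C_t$, namely $\alpha$ and $2\alpha-1$, are nonnegative, so no negative moments of the scheme are needed and only Lemma \ref{lem:finitenessOfTheMomentsOfXbar}—valid under Hypothesis \ref{hypothesisH0} alone—is invoked, consistent with the claim that this local estimate requires no more than the basis hypotheses. The only genuine step is the reverse–triangle–inequality reduction at the start; once the local time term is discarded, the rest is a routine conditioning-and-Gaussian-moments computation, and I expect no real obstacle beyond bookkeeping the constants and checking that each power of $\Xe{t}$ is covered by the available moment bounds.
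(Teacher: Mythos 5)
Your proposal is correct and follows essentially the same route as the paper: the paper also passes from $|\X_t-\Xe{t}|$ to the three explicit increments of $\Z{t}-\Xe{t}$ (implicitly via the same reverse triangle inequality, since $\Xe{t}\geq 0$), applies the convexity inequality, and then bounds each term using the linear growth of $b$, Lemma \ref{lem:finitenessOfTheMomentsOfXbar}, and Gaussian moment identities. Your write-up merely spells out the conditioning argument that the paper leaves as "quite easy to conclude."
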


By construction the scheme $\X$ is nonnegative, but a key point of the convergence proof resides in the analysis of the behavior of $\X$ or $\Z{}$ visiting the point 0.  
The next Lemma shows that although $\Z{t}$ is not always positive, the probability of $\Z{t}$ being negative is actually very small under suitable  hypotheses.

\begin{lem}\label{lem:ZbarRemainsPositive}
For  $\alpha\in[\tfrac{1}{2},1)$, if $b(0)>2\alpha(1-\alpha)^2\sigma^2$,  and $\dt\leq \tfrac{1}{2 K(\alpha)}$, then there exists a positive constant $\gamma$, depending on the parameters of the model, but not on $\dt$, such that
\begin{equation}\label{eq:ProbabilityOfReflection}
\sup_{k=0,\ldots,N-1}\P\left( \inf_{t_k< s\leq t_{k+1}}\Z{s}\leq0 \right)\leq \exp\left( -\frac{\gamma}{\dt}\right).
\end{equation}
In particular,
\begin{equation}\label{eq:ZbarRemainsPositive}
\sup_{0\leq t\leq T}\P\left( \Z{t}\leq0 \right)\leq C\exp\left( - \frac{\gamma}{\dt}\right).
\end{equation}
\end{lem}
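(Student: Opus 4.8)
The plan is to condition on $\F_{t_k}$ and exploit the Markov structure of the scheme. On the interval $(t_k,t_{k+1}]$ the value $\Xe{s}=\X_{t_k}$ is $\F_{t_k}$-measurable, so writing $y=\X_{t_k}\ge0$ and $B_u=W_{t_k+u}-W_{t_k}$ (a standard Brownian motion independent of $\F_{t_k}$), I would reduce the claim to the bound $\sup_{y\ge0}\P(\inf_{0<u\le\dt}\zeta_u\le0)\le\exp(-\gamma/\dt)$, where $\zeta_u=y+b(y)u+\sigma y^\alpha B_u+\frac{\alpha\sigma^2}2 y^{2\alpha-1}(B_u^2-u)$. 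The key algebraic observation is that $\zeta_u$ is a convex quadratic in $B_u$; completing the square gives
\[\zeta_u=\frac{\alpha\sigma^2}2 y^{2\alpha-1}\Big(B_u+\tfrac{y^{1-\alpha}}{\alpha\sigma}\Big)^2+y\Big(1-\tfrac1{2\alpha}\Big)+u\Big(b(y)-\tfrac{\alpha\sigma^2}2 y^{2\alpha-1}\Big).\]
Writing $a=a(y)=\frac{y^{1-\alpha}}{\alpha\sigma}$, the square term is nonnegative and vanishes only when $B_u=-a$, so the minimal value of $\zeta_u$ over the Gaussian increment is concentrated around the excursion $B_u\approx-a$.

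Second, I would show that the reflection event forces a Brownian excursion of size bounded below by a constant independent of $\dt$. From the completed-square form, $\{\zeta_u\le0\}$ is contained in $\{B_u\le-a+\sqrt{r(y,u)}\}$ where $r(y,u)=-(2\alpha-1)a^2+u\big(1-\frac{2b(y)}{\alpha\sigma^2 y^{2\alpha-1}}\big)$ is the discriminant, and in particular requires $r(y,u)\ge0$. Using the Lipschitz lower bound $b(y)\ge b(0)-Ky$ this already rules out small $y$: when $Ky+\frac{\alpha\sigma^2}2 y^{2\alpha-1}\le b(0)$ one has $b(y)\ge\frac{\alpha\sigma^2}2 y^{2\alpha-1}$, the discriminant is negative and no reflection is possible; hence reflection forces $y$ (and therefore $a(y)$) to be bounded below by a positive constant. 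Inserting $b(y)\ge b(0)-Ky$ into $r$ and absorbing the resulting cross term $\frac{2uK}{\alpha\sigma^2}y^{2(1-\alpha)}$ by Young's inequality is exactly what produces the constant $K(\alpha)$ of \eqref{def:b_alpha}; together with the step restriction $\dt\le\frac1{2K(\alpha)}$ (which makes $1-\dt K\ge\frac12$) and the standing assumption $b(0)>2\alpha(1-\alpha)^2\sigma^2$, i.e. $b_\sigma(\alpha)>0$, one obtains that whenever $\zeta_u\le0$ for some $u\le\dt$ one must have $a(y)-\sqrt{r(y,u)}\ge c$ for a constant $c>0$ depending only on the model parameters. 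Equivalently $\{\inf_{0<u\le\dt}\zeta_u\le0\}\subseteq\{\inf_{0<u\le\dt}B_u\le-c\}$.

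Finally, the reflection principle gives $\P(\inf_{0<u\le\dt}B_u\le-c)=2\P(B_\dt\le-c)\le\exp(-c^2/(2\dt))$, so \eqref{eq:ProbabilityOfReflection} follows with $\gamma=c^2/2$. The pointwise bound \eqref{eq:ZbarRemainsPositive} is then immediate: for fixed $t$ with $t_k=\eta(t)$ one has $\{\Z{t}\le0\}\subseteq\{\inf_{t_k<s\le t}\Z{s}\le0\}\subseteq\{\inf_{t_k<s\le t_{k+1}}\Z{s}\le0\}$, whence the single-time probability is bounded by the same exponential (with $C=1$). I expect the main obstacle to be the uniform-in-$y$ bookkeeping of the second step: the required excursion size $c$ must be shown positive simultaneously in the small-$y$ regime (where $b(0)>0$ precludes reflection through the drift), in the intermediate regime near the reflection threshold (where $a(y)$ is smallest and the condition $b_\sigma(\alpha)>0$ is binding), and in the large-$y$ regime (where the value itself dominates). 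The case $\alpha=\tfrac12$ is degenerate, since then $y(1-\frac1{2\alpha})=0$ and $y^{2\alpha-1}=1$; there the completed square reduces to $\zeta_u=\frac{\sigma^2}4(B_u+\frac{2\sqrt y}\sigma)^2+u(b(y)-\frac{\sigma^2}4)$, reflection forces $y>b_\sigma(\tfrac12)/K>0$ through the condition $b(0)>\sigma^2/4$, and the same excursion estimate applies, so I would treat it separately.
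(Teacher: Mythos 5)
Your proposal is correct and follows essentially the same route as the paper's proof: the negative-discriminant analysis of the quadratic in the Brownian increment (after inserting $b(y)\ge b(0)-Ky$ and the bound $y^{2\alpha-1}\le 4(1-\alpha)^2+(2\alpha-1)[2(1-\alpha)]^{-2(1-\alpha)/(2\alpha-1)}y$, which is exactly what produces $b_\sigma(\alpha)$ and $K(\alpha)$) rules out reflection when $\X_{t_k}$ lies below the threshold $\xx$ --- this is the paper's Lemma~\ref{lem:ZbarPositiveSmallx} --- while above the threshold the reflection event forces the Brownian increment to drop by a fixed amount, giving the tail $\exp(-\gamma/\dt)$. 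The only cosmetic difference is in the second regime: the paper keeps the drift and invokes the explicit law of the running minimum of a drifted Brownian motion together with an $\erfc$ bound, whereas you discard the drift and use the plain reflection principle; both yield the same estimate.
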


To prove Lemma \ref{lem:ZbarRemainsPositive}, it is necessary to establish before the following one, which although technical, gives some intuition about the difference between the SMS and the Symmetrized Euler scheme presented in \cite{BERKAOUI:2008fk}.  
\begin{lem}\label{lem:ZbarPositiveSmallx}
For $\alpha\in[\tfrac{1}{2},1)$,  if $b(0)>2\alpha(1-\alpha)^2\sigma^2$, we set $\xx =  \tfrac{b_\sigma(\alpha)}{K(\alpha)}>0$. Then for all $t\in[0,T]$, and  for all $\rho\in(0,1]$, 
$$\P\left[\Z{t}\leq (1-\rho )b_\sigma(\alpha)\dt ,\; \Xe{t}<{\rho} \xx \right. ]=0.$$
\end{lem}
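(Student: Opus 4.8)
My plan is to turn the probabilistic statement into an impossible deterministic configuration, exploiting that the Milstein correction makes $\Z{t}$ a \emph{convex} quadratic in the Brownian increment. Fix $t\in[0,T]$ and abbreviate $y:=\Xe{t}$, which is $\F_{\eta(t)}$-measurable, and $G:=W_t-W_{\eta(t)}$. Since $\Xe{t}=\X_{\eta(t)}$ has a density when $\eta(t)>0$ and equals $x_0>0$ when $\eta(t)=0$, the set $\{y=0\}$ is negligible and I work on $\{y>0\}$. Reading \eqref{defZ} as a polynomial in $G$,
\[
\Z{t}=\frac{\alpha\sigma^2}{2}y^{2\alpha-1}G^{2}+\sigma y^{\alpha}G+\Big(y+\big(b(y)-\tfrac{\alpha\sigma^2}{2}y^{2\alpha-1}\big)(t-\eta(t))\Big),
\]
the leading coefficient is strictly positive, so completing the square shows that for \emph{every} realization of $G$,
\[
\Z{t}\ \ge\ \frac{2\alpha-1}{2\alpha}\,y+\Big(b(y)-\tfrac{\alpha\sigma^2}{2}y^{2\alpha-1}\Big)(t-\eta(t)),
\]
the minimum being attained at $G=-y^{1-\alpha}/(\alpha\sigma)$. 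Thus it suffices to prove that, on $\{y<\rho\,\xx\}$, this deterministic lower bound exceeds $(1-\rho)b_\sigma(\alpha)\dt$; the event of the statement is then empty.

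The heart of the argument is an affine minorant of the drift--correction term that reproduces exactly the constants in \eqref{def:b_alpha}. For $\alpha\in(\tfrac12,1)$ the exponent $2\alpha-1$ lies in $(0,1)$, so $y\mapsto y^{2\alpha-1}$ is concave on $\er^{+}$ and lies below its tangent at $y_0=[2(1-\alpha)]^{1/(2\alpha-1)}$; evaluating that tangent gives
\[
y^{2\alpha-1}\ \le\ (2\alpha-1)[2(1-\alpha)]^{-\frac{2(1-\alpha)}{2\alpha-1}}\,y+4(1-\alpha)^{2},\qquad y\ge0.
\]
Multiplying by $\tfrac{\alpha\sigma^2}{2}$ turns the slope into $K(\alpha)-K$ and the intercept into $2\alpha(1-\alpha)^2\sigma^2=b(0)-b_\sigma(\alpha)$; combined with the Lipschitz bound $b(y)\ge b(0)-Ky$ this yields
\[
b(y)-\tfrac{\alpha\sigma^2}{2}y^{2\alpha-1}\ \ge\ b_\sigma(\alpha)-K(\alpha)\,y.
\]
The endpoint $\alpha=\tfrac12$ is obtained by passing to the limit, where $y^{2\alpha-1}\equiv1$, $K(\alpha)=K$, and the inequality reduces to $b(y)-\tfrac{\sigma^2}{4}\ge b_\sigma(\tfrac12)-Ky$.

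Inserting this minorant, the lower bound for $\Z{t}$ becomes at least $\tfrac{2\alpha-1}{2\alpha}\,y+\big(b_\sigma(\alpha)-K(\alpha)y\big)(t-\eta(t))$. On $\{y<\rho\,\xx\}$, the definition $\xx=b_\sigma(\alpha)/K(\alpha)$ forces $K(\alpha)y<\rho\,b_\sigma(\alpha)$, hence $b_\sigma(\alpha)-K(\alpha)y>(1-\rho)b_\sigma(\alpha)>0$ (here $b_\sigma(\alpha)>0$ because $b(0)>2\alpha(1-\alpha)^2\sigma^2$). Since the remaining term $\tfrac{2\alpha-1}{2\alpha}\,y$ is nonnegative, the deterministic minorant of $\Z{t}$ is strictly positive and bounded below by $(1-\rho)b_\sigma(\alpha)\dt$; consequently $\{\Z{t}\le(1-\rho)b_\sigma(\alpha)\dt\}\cap\{\Xe{t}<\rho\,\xx\}$ is empty up to a null set and has probability $0$.

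The only genuinely delicate point is the second step: identifying the \emph{sharp} tangent line to $y^{2\alpha-1}$ whose slope and intercept are exactly calibrated so that the minorant collapses to $b_\sigma(\alpha)-K(\alpha)y$. This is what dictates the otherwise opaque shape of $K(\alpha)$ and pins down the threshold $b(0)>2\alpha(1-\alpha)^2\sigma^2$ ensuring $b_\sigma(\alpha)>0$ and $\xx>0$; checking the tangency algebra and the $\alpha\to\tfrac12$ limit is all that stands between the elementary square-completion and the conclusion.
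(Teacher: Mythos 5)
Your route is the paper's route: the pathwise minorant you build is exactly \eqref{eq:linearboundForZbar} (Lipschitz bound $b(y)\ge b(0)-Ky$ combined with the tangent-line inequality \eqref{eq:boundForBadTerm}, whose slope and intercept you recompute correctly, so that the drift term collapses to $b_\sigma(\alpha)-K(\alpha)y$), and your completion of the square in $G$ is arithmetically the same as the paper's verification that the discriminant $\Delta(x,\alpha)$ of the conditional Gaussian quadratic is negative for $x<\rho\,\xx$: both statements say that, conditionally on $\Xe{t}=y$, the quadratic stays above its vertex value $\tfrac{2\alpha-1}{2\alpha}\,y+\bigl(b_\sigma(\alpha)-K(\alpha)y\bigr)\bigl(t-\eta(t)\bigr)$. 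Up to that point the argument is sound (the detour through densities of $\Xe{t}$ to discard $\{y=0\}$ is unnecessary, by the way: for $\alpha=\tfrac12$ the quadratic is nondegenerate at $y=0$, and for $\alpha>\tfrac12$ one has $\Z{t}=b(0)\,(t-\eta(t))\ge 0$ directly there).

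The genuine gap is your last sentence. On $\{y<\rho\,\xx\}$ the vertex value dominates $(1-\rho)b_\sigma(\alpha)\,\bigl(t-\eta(t)\bigr)$, \emph{not} $(1-\rho)b_\sigma(\alpha)\dt$: since $t-\eta(t)\le\dt$, the inequality you need points the wrong way, and the spare term $\tfrac{2\alpha-1}{2\alpha}\,y$ cannot make up the deficit $(1-\rho)b_\sigma(\alpha)\bigl(\dt-(t-\eta(t))\bigr)$, because $y$ is only constrained above (it may be arbitrarily small) and this term vanishes identically at $\alpha=\tfrac12$. Concretely, for $\alpha=\tfrac12$, $\rho<1$ and $t$ just after a grid point, your deterministic minorant is close to $0$, far below $(1-\rho)b_\sigma(\tfrac12)\dt$, so the claimed domination is false as a pathwise statement and the event cannot be declared empty this way. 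What your computation actually proves is the lemma with $\dt$ replaced by $t-\eta(t)$ --- which, for what it is worth, is precisely what the paper's displayed computation proves as well: the paper works throughout with the threshold $(1-\rho)b_\sigma(\alpha)\ds$, $\ds=s-\eta(s)$, and passes to the stated form only through the terse ``taking $\ds=\dt$ we conclude'', i.e.\ by reading the bound over a full time step. A careful write-up should therefore either state the conclusion with $t-\eta(t)$, or note that in the form actually needed downstream ($\rho=1$, as in Lemma \ref{lem:ZbarRemainsPositive}) the threshold is $0$ and the two formulations coincide; asserting the pointwise bound with the full $\dt$ at arbitrary intermediate $t$, as you do, is not justified by the square-completion.
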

Roughly speaking, from this lemma we see that when $\Z{\eta(t)}>0$, $\Z{t}$ becomes negative only when 
\[|\Z{t}-\Z{\eta(t)}|>{\rho}\xx.\]
But observe that only the left-hand side of this inequality  depends on $\dt$,  and its expectation decreases to zero proportionally  to $\sqrt{\dt}$, according to Lemma~\ref{lem:localErrorOfTheScheme}. 

Now imposing $\dt$ small enough, we prove an explicit bound for the local time moment of $\X$. 
\begin{lem}\label{lem:Control2ndMomentLocalTime} For $\alpha\in[\tfrac{1}{2},1)$, if $b(0)>2\alpha(1-\alpha)^2\sigma^2$ and $\dt\leq \tfrac{1}{2K(\alpha)} \wedge \tfrac{x_0}{(1-\sqrt{\alpha})b_\sigma(\alpha)}$, then there exist  positive constants $C$ and  $\gamma>0$ depending on $\alpha$, $b(0)$, $K$, and $\sigma$ but not in $\dt$ such that
$$\E\left(L_T^0(\X)^2\right)\leq C\frac{1}{\sqrt{\dt}}\exp\left(  \frac{-\gamma}{2\dt}\right).$$
\end{lem}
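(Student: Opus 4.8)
The plan is to exploit the fact that the local time $L^0(\X)$ grows only on the zero set $\{s:\X_s=0\}=\{s:\Z{s}=0\}$, and that by Lemma \ref{lem:ZbarRemainsPositive} the scheme meets this set only on an event of exponentially small probability. The argument transfers that smallness to the second moment through a Cauchy--Schwarz split, using a crude but $\dt$-uniform bound on a higher moment of $L_T^0(\X)$.

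First I would establish the uniform-in-$\dt$ fourth moment bound $\E[L_T^0(\X)^4]\le C$. Isolating $L_T^0(\X)$ in the Tanaka decomposition \eqref{XbarIsSemiMartingale} and using $L_T^0(\X)\ge0$ gives
\[0\le L_T^0(\X)\le \X_T + x_0 + \int_0^T|b(\Xe{s})|\,ds + |M_T|,\]
where $M_T=\int_0^T\sgn(\Z{s})[\sigma\Xe{s}^\alpha+\alpha\sigma^2\Xe{s}^{2\alpha-1}(W_s-W_{\eta(s)})]\,dW_s$. Raising to the fourth power and taking expectations, the first three terms are handled by the Lipschitz bound $|b(x)|\le b(0)+K|x|$ together with Lemma \ref{lem:finitenessOfTheMomentsOfXbar}, and the martingale term by the Burkholder--Davis--Gundy inequality followed again by Lemma \ref{lem:finitenessOfTheMomentsOfXbar}; here the exponents $\alpha$ and $2\alpha-1$ are nonnegative and bounded for $\alpha\in[\tfrac12,1)$, so all moments of $\Xe{s}$ that appear are controlled, and $\E[(W_s-W_{\eta(s)})^8]\le C\dt^4$ stays bounded. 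Every resulting constant is independent of $\dt$.

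Next I would localize the local time on the grid. Write $L_T^0(\X)=\sum_{k=0}^{N-1}\Delta_k L$ with $\Delta_k L=L_{t_{k+1}}^0(\X)-L_{t_k}^0(\X)$, and set $A_k=\{\inf_{t_k<s\le t_{k+1}}\Z{s}\le0\}$. Since $\X=|\Z{}|$ and $\Z{}$ is continuous on each $(t_k,t_{k+1}]$ with $\Z{t_k^{+}}=\X_{t_k}\ge0$, the process $\X$ can vanish on $(t_k,t_{k+1}]$ only if $\Z{}$ touches $0$ there; as $L^0(\X)$ increases only on $\{\X=0\}$, this forces $\Delta_k L=\Delta_k L\,\ind{A_k}$. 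Summing, every increment vanishes off $B:=\bigcup_{k}A_k$, hence $L_T^0(\X)=L_T^0(\X)\,\ind{B}$.

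Finally I would combine the two ingredients. Cauchy--Schwarz gives
\[\E[L_T^0(\X)^2]=\E[L_T^0(\X)^2\ind{B}]\le \big(\E[L_T^0(\X)^4]\big)^{1/2}\,\P(B)^{1/2},\]
while a union bound and Lemma \ref{lem:ZbarRemainsPositive} yield $\P(B)\le\sum_{k=0}^{N-1}\P(A_k)\le N\exp(-\gamma/\dt)=(T/\dt)\exp(-\gamma/\dt)$; the hypotheses $b(0)>2\alpha(1-\alpha)^2\sigma^2$ and $\dt\le\tfrac{1}{2K(\alpha)}\wedge\tfrac{x_0}{(1-\sqrt{\alpha})b_\sigma(\alpha)}$ are precisely what allow that lemma to be invoked with a $\dt$-independent $\gamma>0$. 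Inserting the fourth-moment bound gives $\E[L_T^0(\X)^2]\le C(T/\dt)^{1/2}\exp(-\gamma/(2\dt))=C\dt^{-1/2}\exp(-\gamma/(2\dt))$, as claimed. The main obstacle is the localization step: one must argue carefully that $L^0(\X)$ puts no mass outside the events $A_k$, which hinges on $\X=|\Z{}|$, the within-interval continuity of $\Z{}$, and on counting each grid point as the right endpoint of a single subinterval so that boundary visits are neither missed nor double-counted. Once this support property is secured, the exponential smallness of $\P(B)$ does all the work, and the precise exponent ($4$) in the auxiliary moment bound is immaterial.
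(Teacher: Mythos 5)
Your proof is correct and reaches the stated bound with the same three-part skeleton as the paper --- a $\dt$-uniform bound on $\E[L_T^0(\X)^4]$ obtained from the Tanaka decomposition \eqref{XbarIsSemiMartingale} and Burkholder--Davis--Gundy, a localization of the support of $dL^0(\X)$ on an exponentially rare event, and a Cauchy--Schwarz split --- but the localization step is executed by a different route. The paper works with the stopping time $\stalpha=\inf\{s>0:\X_s<(1-\sqrt{\alpha})b_\sigma(\alpha)\dt\}$, shows via the $\varepsilon$-limit characterization of local time (Revuz--Yor, Corollary VI.1.9) that $L^0_{T\wedge\stalpha}(\X)=0$ because $\X$ stays above the strictly positive threshold $(1-\sqrt{\alpha})b_\sigma(\alpha)\dt$ before $\stalpha$, and then invokes Lemma \ref{lem:ProbabilityOfThetaLessThanT} to get $\P(\stalpha\le T)\le(T/\dt)\exp(-\gamma/\dt)$; this is also why the hypothesis $\dt\le x_0/[(1-\sqrt{\alpha})b_\sigma(\alpha)]$ is imposed (it makes $\stalpha$ a.s.\ strictly positive). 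You instead decompose over the grid intervals, use the support property of local time ($dL^0(\X)$ is carried by $\{s:\X_s=0\}$) together with $\X=|\Z{}|$ to conclude $L_T^0(\X)=L_T^0(\X)\ind{B}$ with $B=\cup_k A_k$, and bound $\P(B)$ by a union bound over Lemma \ref{lem:ZbarRemainsPositive}. The two routes are quantitatively equivalent --- Lemma \ref{lem:ProbabilityOfThetaLessThanT} is itself proved by a union bound over the same grid intervals, and both give the factor $(T/\dt)\exp(-\gamma/\dt)$ whose square root produces the $\dt^{-1/2}\exp(-\gamma/(2\dt))$ in the statement --- but your version is slightly more economical: it bypasses the stopping time, the positive threshold, and the condition on $\dt$ involving $x_0$, needing only $\dt\le 1/(2K(\alpha))$ and $b(0)>2\alpha(1-\alpha)^2\sigma^2$ to invoke Lemma \ref{lem:ZbarRemainsPositive}, at the price of arguing the support/partition step directly as you do.
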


We end this section with another key preliminary result, which is the convergence rate of order 1 for the {\it corrected} local error.  Although the classical local error is of order 1/2, as stated in  Lemma~\ref{lem:localErrorOfTheScheme}, the local error seen by the diffusion coefficient function, corrected with the Milstein term stays  of order 1. 
\begin{lem}[Corrected local error process]\label{lem:CorrectedLocalError}
Let us fix $p\geq1$, and $\alpha\in[\tfrac{1}{2},1)$. For $\alpha>\tfrac{1}{2}$, assume $b(0)>2\alpha(1-\alpha)^2\sigma^2$, whereas for  $\alpha = \tfrac{1}{2}$,  assume  ${b(0)}>3(2p+1)\sigma^2/2.$ 
Then, there exists $C>0$, depending on the parameters of the model but not in $\dt$, such that for all $\dt\leq~\dmax(\alpha)$, the Corrected Local Error satisfies
$$\sup_{0\leq t\leq T}\E\left[ \left|  \sigma \X_t^\alpha-\sigma \Xe{t}^\alpha-\alpha\sigma^2\Xe{t}^{2\alpha-1}(W_t-W_{\eta(t)}) \right|^{2p}\right]\leq C\dt^{2p}.$$
\end{lem}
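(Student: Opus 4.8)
The plan is to work on the event $\{\Z{t}>0\}$, where $\X_t=\Z{t}$ is genuinely positive, and to perform a second order Taylor expansion of $x\mapsto\sigma x^\alpha$ around $\Xe{t}$: the first order term reproduces the Milstein correction $\alpha\sigma^2\Xe{t}^{2\alpha-1}(W_t-W_{\eta(t)})$ exactly, so that the corrected local error reduces to remainders that are all of order $\dt$. Write $\mathcal{E}_t:=\sigma\X_t^\alpha-\sigma\Xe{t}^\alpha-\alpha\sigma^2\Xe{t}^{2\alpha-1}(W_t-W_{\eta(t)})$ and split $\E[|\mathcal{E}_t|^{2p}]$ over $\{\Z{t}>0\}$ and $\{\Z{t}\leq0\}$. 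On the second event I would use H\"older's inequality together with the moment bounds of Lemma~\ref{lem:finitenessOfTheMomentsOfXbar} (applied to $\X_t^\alpha$, $\Xe{t}^\alpha$ and $\Xe{t}^{2\alpha-1}$) and the Gaussian moments of the increment to bound $\E[|\mathcal{E}_t|^{4p}]$ by a constant independent of $\dt$, and then invoke \eqref{eq:ZbarRemainsPositive} of Lemma~\ref{lem:ZbarRemainsPositive}; this contributes a term of size $C\exp(-\gamma/(2\dt))$, negligible compared with $\dt^{2p}$.

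On $\{\Z{t}>0\}$ I set $Y:=\Xe{t}$ and $D:=\X_t-Y=\Z{t}-Y$, so that $D=b(Y)(t-\eta(t))+\sigma Y^\alpha(W_t-W_{\eta(t)})+\tfrac{\alpha\sigma^2}{2}Y^{2\alpha-1}[(W_t-W_{\eta(t)})^2-(t-\eta(t))]$. Since $\alpha\sigma Y^{\alpha-1}\cdot\sigma Y^\alpha(W_t-W_{\eta(t)})=\alpha\sigma^2Y^{2\alpha-1}(W_t-W_{\eta(t)})$, the first order term of the expansion cancels the Milstein correction and leaves $\mathcal{E}_t\ind{\Z{t}>0}=(T_1+T_2+T_3)\ind{\Z{t}>0}$, with the drift remainder $T_1=\alpha\sigma Y^{\alpha-1}b(Y)(t-\eta(t))$, the Milstein cross term $T_2=\tfrac{\alpha^2\sigma^3}{2}Y^{3\alpha-2}[(W_t-W_{\eta(t)})^2-(t-\eta(t))]$, and the genuine Taylor remainder $T_3=\sigma[\X_t^\alpha-Y^\alpha-\alpha Y^{\alpha-1}D]$.

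For $T_1$ and $T_2$ I would use that $Y$ is $\mathcal{F}_{\eta(t)}$-measurable and hence independent of the increment $W_t-W_{\eta(t)}$, factor the expectations, and apply $\E|W_t-W_{\eta(t)}|^{2p}\leq C\dt^p$, $\E|(W_t-W_{\eta(t)})^2-(t-\eta(t))|^{2p}\leq C\dt^{2p}$, the Lipschitz bound $|b(Y)|\leq b(0)+KY$, and $t-\eta(t)\leq\dt$. Both terms are then $O(\dt)$ in $L^{2p}(\Omega)$ provided the (possibly negative) moments $\E[Y^{(\alpha-1)2p}]$ and $\E[Y^{(3\alpha-2)2p}]$ are bounded uniformly in $\dt$. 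For $T_3$ I would use the elementary monotonicity estimates for the remainder of $x\mapsto x^\alpha$, namely $|T_3|\leq C\,Y^{\alpha-2}D^2$ when $\X_t\geq Y$ and $|T_3|\leq C\min(Y^\alpha,\,\X_t^{\alpha-2}D^2)$ when $\X_t<Y$, and split according to the size of $Y$ relative to $\xx$. On $\{Y\geq\xx/2\}$ the point $Y$, and off a large deviation event also the intermediate point, are bounded below by a constant; on $\{Y<\xx/2\}$ Lemma~\ref{lem:ZbarPositiveSmallx} forces $\X_t=\Z{t}\gtrsim\dt$, so the intermediate point is bounded below by a multiple of $\dt$. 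In either region the configurations making the intermediate point much smaller than the typical increment scale require an atypically large $|W_t-W_{\eta(t)}|$ and are therefore exponentially rare; bounding $|T_3|$ there by $C\,Y^\alpha$ and applying H\"older absorbs them.

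The main obstacle is the uniform control, in $\dt$, of the negative powers of the scheme appearing in $T_1$, $T_2$ and in the comparable regime $\X_t\asymp Y$ of $T_3$ (where $Y^{\alpha-2}D^2\asymp Y^{3\alpha-2}\dt$). Unlike the exact process, the scheme only satisfies a pathwise lower bound $\gtrsim\dt$ near zero, so I would first establish, from the positivity Lemmas~\ref{lem:ZbarRemainsPositive} and~\ref{lem:ZbarPositiveSmallx} and Hypothesis~\ref{hypo:2}-$(i)$, a scheme analogue of the negative moment bounds \eqref{boundedNegativeMomentsOfX}--\eqref{boundedNegativeMomentsOfXcir}, uniform in $\dt$, up to the order dictated by the exponents above. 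This is exactly the point where the difference between the cases enters: for $\alpha>\tfrac12$ the exponent $2\alpha-1>0$ tames the most singular contributions and the $p$-independent condition $b(0)>2\alpha(1-\alpha)^2\sigma^2$ suffices, whereas for $\alpha=\tfrac12$ the second derivative of $x\mapsto x^{1/2}$ is as singular as $x^{-3/2}$, and controlling the corresponding negative moments of the scheme forces the stronger, $p$-dependent threshold $b(0)>3(2p+1)\sigma^2/2$, whose right-hand side is calibrated so that $2b(0)/\sigma^2$ exceeds the order of the negative moments required.
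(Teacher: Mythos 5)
Your algebra is sound: on the event where the scheme is positive, the first-order term of the Taylor expansion of $x\mapsto\sigma x^\alpha$ around $\Xe{t}$ does reproduce the Milstein correction exactly, leaving the drift remainder $T_1$, the cross term $T_2$ and the Lagrange remainder $T_3$, each formally of order $\dt$ in $L^{2p}(\Omega)$. The paper localizes differently --- with the stopping time $\stalpha=\inf\{s>0:\X_s<(1-\sqrt{\alpha})b_\sigma(\alpha)\dt\}$ and It\^o's formula for $\sigma|x|^\alpha$ on $[\eta(s),s\wedge\stalpha]$ rather than a static expansion on $\{\Z{t}>0\}$ --- but that choice is not cosmetic, as explained below.

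The genuine gap is the step you yourself call ``the main obstacle'' and then leave unexecuted: the uniform-in-$\dt$ control of the negative moments of the scheme needed for $T_1$, $T_2$ and $T_3$ (powers $Y^{(\alpha-1)2p}$, $Y^{(3\alpha-2)2p}$, and $\xi^{(\alpha-2)2p}$ at the intermediate point, i.e.\ negative moments of order up to roughly $3p$ when $\alpha=\tfrac{1}{2}$). This is not a routine consequence of Lemmas \ref{lem:ZbarRemainsPositive} and \ref{lem:ZbarPositiveSmallx}: those only give a pathwise floor of order $\dt$ near zero, which yields the useless bound $\E[\Z{t}^{-q}]\leq C\dt^{-q}$. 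In the paper this estimate is the bulk of the proof (Lemma \ref{lem:Lemmas} and the chain of lemmas proving it): one must stop the process at $\stalpha$ so that It\^o's formula can be applied to $z\mapsto z^{-q}$ along the \emph{whole path}, control the quadratic variation by the a.s.\ inequality $\left(\sigma\Xe{s}^{\alpha}+\alpha\sigma^2\Xe{s}^{2\alpha-1}\dW_s\right)^2\leq\sigma^2\Xe{s}^{2\alpha}+2\alpha\sigma^2\Xe{s}^{2\alpha-1}\Z{s}$, bound the ratio $\Xe{s}/\Z{s}$ off the event $\{\Z{s}\leq\Xe{s}/2\}$, and estimate the probability of that event --- for $\alpha=\tfrac{1}{2}$ via the exponential moment bound of Lemma \ref{LemmaBossyDiop} at the grid points --- before closing with Gronwall. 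The threshold $b(0)>3(2p+1)\sigma^2/2$ enters there as the sign condition $b(0)>\tfrac{3}{2}\sigma^2(q+1)$ that makes the term $q\left(\tfrac{3(q+1)}{2}\sigma^2-b(0)\right)\E\int\Z{s}^{-(q+1)}ds$ negative in the It\^o expansion; it cannot be recovered from a generic count of ``the order of the negative moments required.'' Your decomposition over the single-time event $\{\Z{t}>0\}$ does not give access to this pathwise It\^o/Gronwall machinery, so as written the plan cannot be closed without importing --- and proving --- essentially all of the paper's Section 5.4. A secondary soft spot: the intermediate point $\xi$ in your remainder is not adapted and its negative moments are not controlled by any quoted lemma; the paper avoids this entirely by keeping all negative powers in terms of $\Z{u\wedge\stalpha}$ inside It\^o integrals, where Lemma \ref{lem:Lemmas} applies.
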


\section{Proof of the main Theorem \ref{mainTheorem}}\label{sec:theProof}

The proof of Theorem \ref{mainTheorem} is built in several steps. First, we work with the $L^{2p}(\Omega)$-norm of the error, for $p\geq1$, then at the last step of the proof we go back to the $L^p(\Omega)$-norm for $p\geq1$.

In what follows we denote 
$$\e_t:= \X_t-X_t$$
and 
$$\Sigma_t := \sgn(\Z{t})\left[\sigma\Xe{t}^\alpha+\alpha\sigma^2\Xe{t}^{2\alpha-1}(W_t-W_{\eta(t)})\right]-\sigma X_t^\alpha$$
so that 
$$d\e_t  = \left(\sgn(\Z{t})b(\Xe{t})-b(X_t)\right)dt + dL_t^0(\X)+ \Sigma_t dW_t.$$
Also, to make the notation lighter, we will denote the  Corrected Local Error by
$$D_t(\X) := \sigma \X_t^\alpha-\sigma \Xe{t}^\alpha-\alpha\sigma^2\Xe{t}^{2\alpha-1}(W_t-W_{\eta(t)}).$$

\subsection{The Weighted Error}

Before to prove the main theorem, we  establish in the Proposition \ref{LemmaScaledError} the convergence of a weighted  error. For $p\geq1$, let us consider $(\beta_t,0\leq t\leq T)$, defined by 
\begin{align}\label{def:beta}
\bb_t = 2p\|b'\|_\infty+2p(4p-1)+\frac{8\alpha^2p(4p-1)\sigma^2}{X_t^{2(1-\alpha)}},
\end{align}
and the Weight Process $\GG$ defined by
\begin{equation}\label{defScaleProcessTilde}
\Gamma_t = \exp\left( - \int_{0}^t{\bb_s ds} \right).
\end{equation}

The  Weight Process is adapted, almost surely positive, and bounded by $1$. Its paths are non increasing and hence has bounded variation, and  also satisfies 
$$d\Gamma_t = -\bb_t\Gamma_tdt.$$ 

The process $\left(\Gamma_t\right)_{t\geq0}$  can be seen as an integrating factor in the sense of linear first order ODE (see for example \cite{Simmons:1972aa}). When we apply the It\^o's Lemma to  $\Gamma^{ 2}_t\e^{4p}_t$, instead of $\e^{4p}_t$ alone, we can remove a very annoying term that appears in the righthand side bound (see the proof of Lemma \ref{lem:firstStepInScaledError}).  The exponential weight  in a $L^p(\Omega)$-norm is a useful tool to obtain a priori bound (as an example, for the existence and uniqueness of the solution of a Backward SDE, it is introduced a norm with exponential weight, such that the operator associated  to the BSDE is contractive under this new norm (see proof of Theorem 1.2 in \cite{Pardoux:1998aa}). 
In the same way, here we introduce this exponential weight to get the following a priori error-bound, which will allow us to prove Theorem \ref{mainTheorem}.

\begin{proposition}[Weighted Error]\label{LemmaScaledError} 
Under the hypotheses of Theorem \ref{mainTheorem}, for $p\geq1$ and $\alpha\in[\tfrac{1}{2},1)$, there exists a constant $C$ not depending on $\dt$ such that for all $\dt\leq \dmax(\alpha)$
\begin{equation}\label{eqLemmaScaledError}
\sup_{0\leq t\leq T}\E\left[ \Gamma^{2}_t\e^{4p}_t \right]\leq C\dt^{4p}.
\end{equation}
\end{proposition}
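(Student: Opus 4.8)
The plan is to apply It\^o's formula to the product $\Gamma_t^2\e_t^{4p}$, and to exploit the weight $\Gamma$ to cancel \emph{exactly} the one term that is not amenable to Gronwall's lemma: the term carrying the singular factor $1/X_s^{2(1-\alpha)}$ produced by the non-Lipschitz diffusion coefficient. Since $t\mapsto\Gamma_t^2$ has bounded variation with $d(\Gamma_t^2)=-2\bb_t\Gamma_t^2\,dt$, while $\e$ is the continuous semimartingale with $d\e_t=(\sgn(\Z{t})b(\Xe{t})-b(X_t))\,dt+dL_t^0(\X)+\Sigma_t\,dW_t$ and $d\langle\e\rangle_t=\Sigma_t^2\,dt$, the product rule and It\^o's formula give
\[ d(\Gamma_t^2\e_t^{4p})=\Gamma_t^2\Big(-2\bb_t\e_t^{4p}+4p\,\e_t^{4p-1}(\sgn(\Z{t})b(\Xe{t})-b(X_t))+2p(4p-1)\e_t^{4p-2}\Sigma_t^2\Big)dt+4p\,\Gamma_t^2\e_t^{4p-1}\,dL_t^0(\X)+4p\,\Gamma_t^2\e_t^{4p-1}\Sigma_t\,dW_t. \]
After localising so that the stochastic integral is a true martingale---its expectation then vanishes, using $\Gamma\le1$, the bound $2\alpha-1\ge0$, and the moment estimates of Lemmas \ref{lem:LemmaMomentsOfX} and \ref{lem:finitenessOfTheMomentsOfXbar}---I would take expectations and let the localisation tend to infinity.

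Two contributions are discarded at once. The local-time term is nonpositive: $dL_t^0(\X)$ is carried by $\{\X_t=0\}$, where $\e_t=-X_t\le0$ (here $X_t>0$ a.s. by the strict positivity of the exact process, which also follows from the finite negative moments in Lemma \ref{lem:LemmaMomentsOfX}), so $\e_t^{4p-1}\le0$ and the integral against $dL^0(\X)$ has nonpositive expectation. For the remaining drift and diffusion brackets I split according to the sign of $\Z{s}$. On $\{\Z{s}\le0\}$ every integrand is controlled in $L^1$ by the moment bounds already quoted, while Lemma \ref{lem:ZbarRemainsPositive} bounds $\P(\Z{s}\le0)$ by $\exp(-\gamma/\dt)$; a Cauchy--Schwarz argument then makes this whole part $O(\exp(-\gamma/2\dt))=o(\dt^{4p})$.

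On $\{\Z{s}>0\}$ one has $\sgn(\Z{s})=1$, $\X_s=\Z{s}$, and $\Sigma_s=\sigma(\X_s^\alpha-X_s^\alpha)-D_s(\X)$. The decisive ingredient is the elementary concavity inequality
\[ (\X_s^\alpha-X_s^\alpha)^2\le\frac{\e_s^2}{X_s^{2(1-\alpha)}}, \]
obtained by setting $r=\X_s/X_s$ and checking that $\big((r^\alpha-1)/(r-1)\big)^2\le1$ for all $r\ge0$. Together with $\Sigma_s^2\le2\sigma^2(\X_s^\alpha-X_s^\alpha)^2+2D_s(\X)^2$, this yields $\e_s^{4p-2}\Sigma_s^2\le 2\sigma^2\,\e_s^{4p}/X_s^{2(1-\alpha)}+2\,\e_s^{4p-2}D_s(\X)^2$, so the singular part of the diffusion term is at most $4p(4p-1)\sigma^2\,\Gamma_s^2\e_s^{4p}/X_s^{2(1-\alpha)}$. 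On the other hand, the $1/X_s^{2(1-\alpha)}$ part of $-2\bb_s\Gamma_s^2\e_s^{4p}$ equals $-16\alpha^2p(4p-1)\sigma^2\,\Gamma_s^2\e_s^{4p}/X_s^{2(1-\alpha)}$. Because $\alpha\ge\tfrac12$ forces $16\alpha^2\ge4$, the sum of these two singular terms is nonpositive and is discarded. This exact cancellation is the crux of the argument and dictates both the constant $8\alpha^2$ in \eqref{def:beta} and the lower bounds on $b(0)$ in Hypothesis \ref{hypo:2}-$(i)$.

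What survives must be shown to be either of order $\dt^{4p}$ or absorbable by Gronwall's lemma. The term $2\,\e_s^{4p-2}D_s(\X)^2$ is handled by Young's inequality and Lemma \ref{lem:CorrectedLocalError} (invoked at the order in $p$ that fixes the $\alpha=\tfrac12$ form of Hypothesis \ref{hypo:2}-$(i)$), contributing $C\dt^{4p}$ plus a multiple of $\e_s^{4p}$. For the drift I write, on $\{\Z{s}>0\}$, $b(\Xe{s})-b(X_s)=\big(b(\Xe{s})-b(X_{\eta(s)})\big)+\big(b(X_{\eta(s)})-b(X_s)\big)$: the first difference is Lipschitz, bounded by $\|b'\|_\infty|\e_{\eta(s)}|$, and after Young's inequality is absorbed by the $2p\|b'\|_\infty$ term in $\bb_s$ (using $\Gamma_s\le\Gamma_{\eta(s)}$) together with a Gronwall term. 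The second difference is where Hypothesis \ref{hypo:2}-$(ii)$ enters: expanding $b(X_s)-b(X_{\eta(s)})$ by It\^o's formula, which is legitimate since $b\in\mathcal C^2$ with $b''$ of polynomial growth and $X$ has all the moment bounds of Lemmas \ref{lem:LemmaMomentsOfX}--\ref{LemmaExponentialMomentOfX}, and using that the martingale increment $\int_{\eta(s)}^s b'(X_u)\sigma X_u^\alpha\,dW_u$ has zero conditional mean, one upgrades this discretization error from order $\sqrt{\dt}$ to order $\dt$, yielding
\[ \E\!\left[\Gamma_s^2\,|\e_s|^{4p-1}\big(b(X_{\eta(s)})-b(X_s)\big)\right]\le C\Big(\sup_{u\le s}\E[\Gamma_u^2\e_u^{4p}]+\dt^{4p}\Big). \]
Collecting all the surviving contributions gives
\[ \E[\Gamma_t^2\e_t^{4p}]\le C\int_0^t\sup_{u\le s}\E[\Gamma_u^2\e_u^{4p}]\,ds+C\dt^{4p}, \]
and Gronwall's lemma applied to $s\mapsto\sup_{u\le s}\E[\Gamma_u^2\e_u^{4p}]$ yields \eqref{eqLemmaScaledError}. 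The two genuine obstacles are the exact weight cancellation of the singular $1/X_s^{2(1-\alpha)}$ term---which is precisely what forces $\alpha\ge\tfrac12$ and the constraints on $b(0)$---and the It\^o/$\mathcal C^2$ upgrade of the drift discretization error to order one; by contrast the $\{\Z{s}\le0\}$ and local-time contributions are rendered harmless by the exponential estimate of Lemma \ref{lem:ZbarRemainsPositive} and the sign argument.
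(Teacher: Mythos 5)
Your strategy is the paper's own: integration by parts/It\^o on $\Gamma_t^{2}\e_t^{4p}$, exact disposal of the singular $X_s^{-2(1-\alpha)}$ part of the quadratic variation against the $8\alpha^2p(4p-1)\sigma^2X_t^{-2(1-\alpha)}$ component of $\bb$, removal of everything carrying $\ind{\{\Z{s}<0\}}$ via Lemma~\ref{lem:ZbarRemainsPositive}, Lemma~\ref{lem:CorrectedLocalError} for the $D_s(\X)$ contribution, and Gronwall. Your two local variants are both sound and harmless: you replace \eqref{theAlgebraicTrick} by the sharper $|x^\alpha-y^\alpha|\le |x-y|\,y^{-(1-\alpha)}$ so that only the inequality $16\alpha^2\ge 4$ is needed rather than an exact match of constants, and you discard $\int\Gamma_s^2\e_s^{4p-1}dL_s^0(\X)$ by the sign of $\e_s$ on $\{\X_s=0\}$ where the paper instead bounds it by $C\dt^{4p}$ through Lemma~\ref{lem:Control2ndMomentLocalTime} (which you still need later, so nothing is saved).

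The one step whose stated justification does not hold is the drift term. Writing $b(X_s)-b(X_{\eta(s)})=\int_{\eta(s)}^{s}\bigl(b'(X_u)b(X_u)+\tfrac{\sigma^2}{2}b''(X_u)X_u^{2\alpha}\bigr)du+\int_{\eta(s)}^{s}b'(X_u)\sigma X_u^{\alpha}dW_u$, the martingale increment indeed has zero mean conditionally on $\F_{\eta(s)}$, but it is multiplied by $\Gamma_s^{2}\e_s^{4p-1}$, which is $\F_s$-measurable and correlated with it; the product's expectation is therefore not zero and, estimated crudely, is still only $O(\sqrt{\dt})$. To reach order one you must integrate by parts a second time, applying It\^o to $u\mapsto\Gamma_u^{2}\e_u^{4p-1}\bigl(b(X_{\eta(s)})-b(X_u)\bigr)$ on $[\eta(s),s]$, as in \eqref{theBadGuy}--\eqref{eq:ItoForb}. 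The true martingale part then vanishes, but you inherit cross-variation terms such as $\E\int_{\eta(s)}^{s}\Gamma_u^{2}\e_u^{4p-2}b'(X_u)X_u^{\alpha}\Sigma_u\,du$ and $\E\int_{\eta(s)}^{s}\Gamma_u^{2}\e_u^{4p-3}\bigl(b(X_{\eta(s)})-b(X_u)\bigr)\Sigma_u^{2}du$, whose control requires a further round of Lemma~\ref{lem:CorrectedLocalError}, the negative moments of $X$ and of $\Gamma$ collected in the Remark following the Proposition, and the local-time estimate; this is the content of Lemma~\ref{lem:driftScaledError} and is where the bulk of the work lies. Your target inequality for this term is exactly that lemma's conclusion, so the route is correct, but ``zero conditional mean'' alone does not prove it.
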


\begin{rem}
\item{(i)} \label{rem:hypothesisForB0norm2p}
Since we are going to work first with the $L^{2p}(\Omega)$-norm of the error,  when $\alpha = \tfrac{1}{2}$, Hypothesis \ref{hypo:2}-$(i)$ becomes
$$b(0)>\frac{3(2[2p\lor 2]+1)\sigma^2}{2}=\frac{3(4p+1)\sigma^2}{2},$$
in particular
\begin{equation}\label{hypothesisForB0norm2p}
\left( \frac{2b(0)}{\sigma^2}-1\right)> 12p+2.
\end{equation}
\item{(ii)} \label{rem:integrabilityOfBeta} From Lemma \ref{lem:LemmaMomentsOfX}, the process $\bb$ has polynomial moments of any order for $\alpha>\tfrac{1}{2}$, and when $\alpha=\tfrac{1}{2}$,  there exists $C$ such that $\E[\bb_t^q]  <C,$ for all $1<q<2b(0)/\sigma^2-1$. From the previous point in this Remark, it follows that the process $\bb$ has moments at least up to order $12p+2$.

\item{(iii)} From the definition $\beta$ in \eqref{def:beta}, and due to Lemma \ref{LemmaExponentialMomentOfX}, there exists a constant $C$ such that $\E [ \Gamma_T^{-q}]<C$ for all $q>0$ when $\alpha>\tfrac{1}{2}$, whereas for $\alpha=\tfrac{1}{2}$,  the $q$-th negative moment of the weight process is finite, as soon as
$$ 2p(4p-1)\sigma^2 \, q \leq \frac{\sigma^2}{8}\left( \frac{2b(0)}{\sigma^2}-1\right)^2. $$
Notice that, thanks to point $(i)$ in this Remark, a sufficient condition such that this last inequality holds is 
$$ 16p(4p-1)\, q \leq \left(12p+2\right)^2. $$
\end{rem}

We cut the proof of Proposition \ref{LemmaScaledError}  in two technical lemmas.
\begin{lem}\label{lem:firstStepInScaledError}
Under the hypotheses of Theorem \ref{mainTheorem},  for $p\geq1$ and $\alpha\in[\tfrac{1}{2},1)$, there exists a constant $C$ not depending on $\dt$ such that for all $\dt\leq \dmax(\alpha)$
\begin{align}\label{eq:firstStepInScaledError}
 \E[ \Gamma^{2}_t\e^{4p}_t] \leq 4p \int_{0}^t{\E\left[ \Gamma_s^{2}\e^{4p-1}_s\left(b(X_{\eta(s)})-b(X_s)\right)\right]ds} 
+4p\|b'\|_\infty \int_{0}^t{\sup_{0\leq u\leq s}\E[\Gamma_{u}^{2}\e_{u}^{4p}] ds}  + C\dt^{4p}.
\end{align}
\end{lem}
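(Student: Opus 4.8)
The plan is to apply Itô's formula to the product $\Gamma_t^2 \e_t^{4p}$ and then carefully treat each resulting term, exploiting the non-increasing bounded-variation nature of $\Gamma$ and the semimartingale decomposition $d\e_t = (\sgn(\Z{t})b(\Xe{t}) - b(X_t))\,dt + dL_t^0(\X) + \Sigma_t\,dW_t$. First I would compute $d(\Gamma_t^2 \e_t^{4p})$ using the product rule together with the Itô expansion of $\e_t^{4p}$. Since $\Gamma$ has bounded variation with $d\Gamma_t = -\beta_t \Gamma_t\,dt$, the product rule contributes a term $2\Gamma_t\,d\Gamma_t\cdot \e_t^{4p} = -2\beta_t \Gamma_t^2 \e_t^{4p}\,dt$, and there is no cross-variation term between $\Gamma$ and $\e^{4p}$. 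The Itô expansion of $\e_t^{4p}$ produces a drift contribution $4p\,\e_t^{4p-1}(\sgn(\Z{t})b(\Xe{t}) - b(X_t))$, a local-time contribution $4p\,\e_t^{4p-1}\,dL_t^0(\X)$, a quadratic-variation contribution $\tfrac{1}{2}\cdot 4p(4p-1)\e_t^{4p-2}\Sigma_t^2\,dt$, and a martingale part $4p\,\e_t^{4p-1}\Sigma_t\,dW_t$.

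Next I would take expectations, killing the martingale term (after a localization argument justified by the moment bounds of Lemmas \ref{lem:LemmaMomentsOfX} and \ref{lem:finitenessOfTheMomentsOfXbar}, together with the integrability of $\beta$ from Remark part (ii) and the negative-moment control of $\Gamma$ from part (iii)). The local-time term $4p\,\E[\Gamma_t^2 \e_t^{4p-1}\,dL_t^0(\X)]$ must be shown to be negligible: since $L^0(\X)$ only increases when $\X_t = 0$, on that event $\e_t = -X_t$ and one can bound the contribution using Cauchy–Schwarz together with the sharp local-time estimate of Lemma \ref{lem:Control2ndMomentLocalTime}, which gives an exponentially small factor $\exp(-\gamma/(2\dt))$ that is absorbed into $C\dt^{4p}$. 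The central algebraic manipulation is to rewrite the drift term $4p\,\e_t^{4p-1}(\sgn(\Z{t})b(\Xe{t}) - b(X_t))$ by splitting it as $(b(X_{\eta(t)}) - b(X_t))$ plus remainder terms; the term $4p\,\e_t^{4p-1}(\sgn(\Z{t})b(\Xe{t}) - b(X_{\eta(t)}))$ collects both the sign discrepancy (controlled by $\P(\Z{t}\leq 0)$ from Lemma \ref{lem:ZbarRemainsPositive}, again exponentially small) and the Lipschitz difference $b(\Xe{t}) - b(X_{\eta(t)})$, which via $\|b'\|_\infty$ and Young's inequality feeds into the $4p\|b'\|_\infty \int_0^t \sup_{u\leq s}\E[\Gamma_u^2 \e_u^{4p}]\,ds$ term.

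The quadratic-variation term is where the weight process earns its keep, and I expect this to be the main obstacle. Expanding $\Sigma_t^2 = (\sgn(\Z{t})[\sigma\Xe{t}^\alpha + \alpha\sigma^2\Xe{t}^{2\alpha-1}(W_t - W_{\eta(t)})] - \sigma X_t^\alpha)^2$, I would insert the corrected local error $D_t(\X)$ so that $\Sigma_t$ splits into $\sigma\X_t^\alpha - \sigma X_t^\alpha - D_t(\X)$ plus lower-order pieces arising from the sign flip. The leading contribution $\tfrac{1}{2}\cdot 4p(4p-1)\e_t^{4p-2}(\sigma\X_t^\alpha - \sigma X_t^\alpha)^2$ is problematic because the Hölder coefficient is only $\alpha$-Hölder, not Lipschitz; the standard estimate $|\X_t^\alpha - X_t^\alpha|^2 \leq C|\e_t|^2/(X_t^{2(1-\alpha)} \wedge \X_t^{2(1-\alpha)})$ produces a factor $X_t^{-2(1-\alpha)}$ that does not close a Gronwall argument on its own. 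This is precisely why $\beta_t$ was defined to contain the term $8\alpha^2 p(4p-1)\sigma^2 / X_t^{2(1-\alpha)}$: I would show that the dangerous quadratic term is dominated by $2p(4p-1)\sigma^2 \e_t^{4p}\cdot(2\alpha)^2/X_t^{2(1-\alpha)}$ up to controllable remainders, and this is exactly cancelled by the corresponding piece of $-2\beta_t \Gamma_t^2 \e_t^{4p}$ coming from the $d\Gamma_t$ term. The remaining pieces of $\beta_t$, namely $2p\|b'\|_\infty$ and $2p(4p-1)$, are arranged to absorb the Lipschitz drift remainder and the cross-terms involving $D_t(\X)$ respectively, the latter being of order $\dt^{2p}$ by Lemma \ref{lem:CorrectedLocalError} after Young's inequality. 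Once every dangerous term is either cancelled by $\beta_t$ or bounded by $C\dt^{4p}$ (via the local-error Lemma \ref{lem:localErrorOfTheScheme}, the exponential estimates, and the corrected-local-error Lemma), what survives is exactly the right-hand side of \eqref{eq:firstStepInScaledError}, with the only remaining drift term being the genuinely subtle $4p\,\E[\Gamma_s^2 \e_s^{4p-1}(b(X_{\eta(s)}) - b(X_s))]$ that Hypothesis \ref{hypo:2}-(ii) will later handle in the subsequent lemma.
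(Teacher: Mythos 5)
Your proposal follows essentially the same route as the paper's proof: Itô/integration by parts on $\Gamma_t^2\e_t^{4p}$, the local-time term killed via Cauchy--Schwarz and Lemma \ref{lem:Control2ndMomentLocalTime}, the sign-flip terms via Lemma \ref{lem:ZbarRemainsPositive}, the drift split through $b(X_{\eta(s)})$, and the decomposition $\Sigma_s = (\sigma\X_s^\alpha - \sigma X_s^\alpha) - D_s(\X)$ with the H\"older part absorbed by the $X_s^{-2(1-\alpha)}$ piece of $2\beta_s$ via inequality \eqref{theAlgebraicTrick} and the corrected local error handled by Lemma \ref{lem:CorrectedLocalError}. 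This matches the paper's argument in both structure and the key cancellations.
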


\begin{lem}\label{lem:driftScaledError}
Under the hypotheses of Theorem \ref{mainTheorem},  for $p\geq1$ and $\alpha\in[\tfrac{1}{2},1)$, there exists a constant $C$ not depending on $\dt$ such that for all $\dt\leq \dmax(\alpha)$, and for any $s\in[0,T]$
\begin{equation}\label{eq:boundDriftTerm}
| \E[\Gamma^{2}_s\e^{4p-1}_s (b(X_{\eta(s)})-b(X_s))] | \leq C \sup_{u\leq s}{\E [\Gamma^{2}_u\e^{4p}_u] } + C\dt^{4p}. 
\end{equation}

\end{lem}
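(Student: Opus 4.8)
The plan is to exploit the $\mathcal{C}^2$-regularity of $b$ (Hypothesis~\ref{hypo:2}-(ii)) through It\^o's formula, which is exactly what upgrades the estimate from the Euler rate $\dt^{2p}$ to the Milstein rate $\dt^{4p}$. Throughout, write $m(s):=\sup_{u\le s}\E[\Gamma_u^2\e_u^{4p}]$ and recall $0<\Gamma_t\le 1$. Applying It\^o's formula to $u\mapsto b(X_u)$ on $[\eta(s),s]$ with $dX_u=b(X_u)\,du+\sigma X_u^\alpha\,dW_u$ gives $b(X_{\eta(s)})-b(X_s)=-I_s-J_s$, where $I_s:=\int_{\eta(s)}^s\big(b'b+\tfrac12\sigma^2X^{2\alpha}b''\big)(X_u)\,du$ is of finite variation and $J_s:=\int_{\eta(s)}^s\sigma b'(X_u)X_u^\alpha\,dW_u$ is a martingale increment. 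Since $b'$ is bounded and $b''$ has polynomial growth, Lemma~\ref{lem:LemmaMomentsOfX} yields $\E[|I_s|^{4p}]\le C\dt^{4p}$, and then H\"older followed by Young gives $|\E[\Gamma_s^2\e_s^{4p-1}I_s]|\le m(s)^{(4p-1)/(4p)}(\E|I_s|^{4p})^{1/(4p)}\le C\dt\,m(s)^{(4p-1)/(4p)}\le Cm(s)+C\dt^{4p}$. This Hölder-then-Young pattern, splitting off a power of $m(s)$ and Young-ing against the explicit smallness factor, is the workhorse of the whole proof.

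The real work is the martingale part $\E[\Gamma_s^2\e_s^{4p-1}J_s]$. The crucial structural fact is $J_{\eta(s)}=0$ and $\E[J_s\mid\mathcal F_{\eta(s)}]=0$, so freezing the weighted error at the grid point contributes nothing and only the correlation built up over $[\eta(s),s]$ survives. I would make this precise by It\^o's product formula (integration by parts) for $t\mapsto G_tJ_t$ with $G_t:=\Gamma_t^2\e_t^{4p-1}$, using $G_{\eta(s)}J_{\eta(s)}=0$ and that $\int_{\eta(s)}^sG_t\,dJ_t$ is a (mean-zero) martingale. This leaves three contributions: a cross-variation term $(4p-1)\E\int_{\eta(s)}^s\Gamma_t^2\e_t^{4p-2}\Sigma_t\,\sigma b'(X_t)X_t^\alpha\,dt$, the drift terms $\E\int_{\eta(s)}^s J_t\,(\text{drift of }G_t)\,dt$, and a local-time term $(4p-1)\E\int_{\eta(s)}^s J_t\,\Gamma_t^2\e_t^{4p-2}\,dL_t^0(\X)$.

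For the cross-variation term I would substitute, on $\{\Z{t}>0\}$, the identity $\Sigma_t=\sigma(\X_t^\alpha-X_t^\alpha)-D_t(\X)$. The diffusion-error piece satisfies $|\X_t^\alpha-X_t^\alpha|\le\alpha(\X_t\wedge X_t)^{\alpha-1}|\e_t|$ by concavity of $x\mapsto x^\alpha$; combined with $\e_t^{4p-2}$ it produces $|\e_t|^{4p-1}$, so H\"older gives a factor $m(s)^{(4p-1)/(4p)}$ and Young against the length $\dt$ of the integral bounds it by $Cm(s)+C\dt^{4p}$, the singular factor $(\X_t\wedge X_t)^{\alpha-1}$ being absorbed using the negative moments of $X$ (Lemma~\ref{lem:LemmaMomentsOfX} under Hypothesis~\ref{hypo:2}-(i)) together with the control of $\X$ near the origin. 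The corrected-local-error piece uses Lemma~\ref{lem:CorrectedLocalError} with $p$ replaced by $2p$, which gives $\E|D_t(\X)|^{4p}\le C\dt^{4p}$; this extra $\dt$-smallness, grouped with the integration length, again yields a $C\dt^{4p}$ residual.

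The remaining terms are genuinely lower order. In $\E\int_{\eta(s)}^s J_t\,(\text{drift of }G_t)\,dt$ — which contains the weight-correction term $-2\beta_t\Gamma_t^2\e_t^{4p-1}$ — the lone mean-zero increment $J_t$, paired with the Brownian part of the drift, supplies after a further conditioning on $\mathcal F_{\eta(s)}$ an extra factor $t-\eta(s)$, so these contributions are $O(\dt^2)$ times a quantity that H\"older/Young bound by $Cm(s)+C\dt^{4p}$; here I would invoke the moment estimates for $\beta$ and the negative moments of $\Gamma$ recorded in the remark following Proposition~\ref{LemmaScaledError}. The local-time and $\{\Z{t}\le0\}$ contributions are negligible, being controlled by the exponentially small estimates of Lemmas~\ref{lem:Control2ndMomentLocalTime} and~\ref{lem:ZbarRemainsPositive}, hence far below $\dt^{4p}$. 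Collecting everything gives \eqref{eq:boundDriftTerm}. I expect the main obstacle to be precisely this martingale term: extracting the extra power of $\dt$ from the mean-zero increments (so as to reach $\dt^{4p}$ rather than the Euler-type $\dt^{2p}$) while doing the H\"older-then-Young bookkeeping so that every $\e$-dependent factor is absorbed into $m(s)$ and every genuinely small factor lands at $\dt^{4p}$, all the while keeping the singular factors $X^{\alpha-1}$ and $\X^{\alpha-1}$ integrable through the weight $\Gamma$ and the positivity estimates.
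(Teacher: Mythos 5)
Your proposal follows essentially the same route as the paper's proof: everything rests on the fact that the product vanishes at $u=\eta(s)$, so one integrates by parts over $[\eta(s),s]$, applies It\^o's formula to $b(X_\cdot)$ (this is exactly where Hypothesis \ref{hypo:2}-$(ii)$ enters), kills the stochastic-integral increments by taking expectations, and is left with the cross-variation term $\E\int\Gamma_t^{2}\e_t^{4p-2}\Sigma_t\,\sigma b'(X_t)X_t^\alpha\,dt$ plus drift, local-time and $\{\Z{t}\le 0\}$ contributions — in bijection with the terms $I_1,\dots,I_5$ of \eqref{eq:ItoForb}, the only organizational difference being that you peel off the finite-variation part of $b(X_{\eta(s)})-b(X_s)$ at the endpoint $s$ before doing the product rule on $\Gamma^2\e^{4p-1}J$, whereas the paper expands the whole product at once. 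The H\"older-then-Young bookkeeping, the use of Lemma \ref{lem:CorrectedLocalError} for the $D_t(\X)$ piece, and the exponential negligibility of the local-time and sign-flip terms (Lemmas \ref{lem:Control2ndMomentLocalTime} and \ref{lem:ZbarRemainsPositive}) all match.

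One step needs repair. To absorb the diffusion-error piece $\sigma(\X_t^\alpha-X_t^\alpha)$ of $\Sigma_t$ you invoke $|\X_t^\alpha-X_t^\alpha|\le\alpha(\X_t\wedge X_t)^{\alpha-1}|\e_t|$, which places the singularity on $\X_t\wedge X_t$. Negative moments of the unstopped scheme $\X$ are \emph{not} available — $\X$ is a reflected process that visits $0$, and the paper only controls negative moments of the stopped increment $\Z{t\wedge\stalpha}$ — so ``the control of $\X$ near the origin'' cannot carry this factor. The fix is the symmetric inequality \eqref{theAlgebraicTrick}, which gives $|\X_t^\alpha-X_t^\alpha|\le 2\alpha|\e_t|\,X_t^{\alpha-1}$ and puts the whole singular factor on the exact process; in the cross-variation term it even cancels against $X_t^\alpha$ to leave the harmless power $X_t^{2\alpha-1}$, and in the $\Sigma_t^2$ term it requires only the negative moments of $X$ of order at most $16(1-\alpha)p\le 8p<12p+2$, which Lemma \ref{lem:LemmaMomentsOfX} and Remark $(ii)$ after Proposition \ref{LemmaScaledError} provide under Hypothesis \ref{hypo:2}-$(i)$. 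With that substitution your argument closes and coincides with the paper's.
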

As we will see soon, we prove \eqref{eq:boundDriftTerm} with the help of the It\^o's formula applied to $b$, and here is where we need $b$ of class $\mathcal{C}^2$ required in Hypotheses \ref{hypo:2}-$(ii)$.

\begin{proof}[Proof of Proposition \ref{LemmaScaledError}]
Thank to Lemmas \ref{lem:firstStepInScaledError} and \ref{lem:driftScaledError}, we have 
$$\E [\Gamma^{2}_t\e^{4p}_t] \leq  C\int_{0}^t{\sup_{u\leq s}{\E[\Gamma^{2}_u\e^{4p}_u]} ds}  + C\dt^{4p}. $$
and since the right-hand side is increasing, it follows
$$\sup_{s\leq t}\E[ \Gamma^{2}_s\e^{4p}_s ] \leq C\int_{0}^t{\sup_{u\leq s}{\E[\Gamma^{2}_u\e^{4p}_u]}     ds} +C\dt^{4p}, $$
from where we conclude the result thanks to Gronwall's Inequality.
\end{proof}

Now we present the proof of the technical lemmas.

\begin{proof}[Proof of Lemma \ref{lem:firstStepInScaledError}]
By the integration by parts formula, 
\begin{align*}
\E[\Gamma^{2}_t\e^{4p}_t]&=\;
4p\E\left[ \int_{0}^t{\Gamma_s^{2}\e^{4p-1}_s\left\{\sgn(\Z{s})b(\Xe{s})-b(X_s)\right\}ds}\right]  \\
&\quad+2p(4p-1)\E\left[\int_{0}^t{\Gamma_s^{2}\e^{4p-2}_s \Sigma_s^2ds}\right]\\
&\quad+ 4p\E\left[ \int_{0}^t{\Gamma_s^{2}\e^{4p-1}_s dL_s^0(\X) }\right]-\E\left[ \int_{0}^t{2\bb_s\Gamma_s^{2}\e^{4p}_sds}\right].
\end{align*}
Thanks to Lemma \ref{lem:Control2ndMomentLocalTime} and the control in the moments of the exact process in Lemma \ref{lem:LemmaMomentsOfX} we have
\begin{align*}
\E\left[ \int_{0}^t{\Gamma_s^{2}\e_s^{4p-1}dL_s^0(\X)}\right] \leq \E\left[ \int_{0}^t{|X_s^{4p-1}|dL_s^0(\X)}\right]
\leq 
\sqrt{\E\left[ \sup_{0\leq s\leq T} X_s^{8p-2}\right] \E\left[ L_T^0(\X)^2\right]  }\leq C\dt^{4p}.
\end{align*}
On the other hand, with $\sgn(x)=1-2\ind{\{x<0\}}$,  calling $\dW_s=W_s-W_{\eta(s)}$, we get for all $0\leq s\leq t$
\begin{equation*}
 \Sigma_s^2 \leq 2\left[\sigma X_s^\alpha - \sigma\X_s^{\alpha}\right]^2 
+ 2\left[\sigma\X_s^{\alpha} - \sigma\Xe{s}^\alpha-\alpha\sigma^2\Xe{s}^{2\alpha-1}\dW_s\right]^2
+ R_s^{\Sigma}\ind{ \{\Z{s}<0\}},
\end{equation*}
where we put aside all the terms multiplied by $\ind{\{\Z{s}<0\}}$ in 
\begin{align*}
R_s^{\Sigma} := & 4\left[\sigma\Xe{s}^\alpha+\alpha\sigma^2\Xe{s}^{2\alpha-1}\dW_s\right]\\
& \times \left\{ \left[\sigma\Xe{s}^\alpha+\alpha\sigma^2\Xe{s}^{2\alpha-1}\dW_s\right] +\left[\sigma X_s^\alpha - \sigma\X_s^{\alpha}\right]
+\left[\sigma\X_s^{\alpha} - \sigma\Xe{s}^\alpha-\alpha\sigma^2\Xe{s}^{2\alpha-1}\dW_s\right]\right\}.
\end{align*}
So, from the previous computations, the Lipschitz property of $b$, and Young's Inequality, we conclude
\begin{align*}
\E[ \Gamma^{2}_t\e^{4p}_t]&\leq
4p \int_{0}^t{\E\left[\Gamma_s^{2}\e^{4p-1}_s\left(b(X_{\eta(s)})-b(X_s)\right)\right] ds}  \\
&\quad{+4p\|b'\|_\infty \int_{0}^t{\E\left[\Gamma_{\eta(s)}^{2}\e_{\eta(s)}^{4p}\right]ds}+4p\|b'\|_\infty \int_{0}^t{\E[\Gamma_s^{2}\e^{4p}_s]ds}} \\
&\quad{+{4}p(4p-1)\E\left[ \int_{0}^t{\Gamma_s^{2}\e^{4p-2}_s\left(\sigma X_s^\alpha - \sigma\X_s^{\alpha}\right)^2 ds}\right]}\\
&\quad{+{(4p-2)}(4p-1)\E\left[ \int_{0}^t{\Gamma_s^{2}\e^{4p}_sds}\right]}\\
&\quad{+{2}(4p-1)\int_{0}^t{\E[ D_s(\X)^{4p}]ds}}\\
&\quad -\E\left[\int_{0}^t{{2}\bb_s\Gamma_s^{2}\e^{4p}_s ds}\right]+ \int_{0}^t{\E\left[ R_s\ind{\{\Z{s}<0\}}\right]ds}  + C\dt^{4p}.
\end{align*}
where $R_s = 4p(4p-1)\e_s^{4p-2}R_s^{\Sigma} + 8p\e_s^{4p-1}b(\Xe{s})$, and from Lemma  \ref{lem:ZbarRemainsPositive}  we have
$$\E\left[ R_s\ind{\{\Z{s}<0\}}\right]\leq C\dt^{4p}.$$
Since $\dt\leq \dmax(\alpha)$ and Remark \ref{rem:hypothesisForB0norm2p}-$(i)$, we can apply Lemma \ref{lem:CorrectedLocalError} so
$\E [ D_s(\X)^{4p}] \leq C\dt^{4p}$. 
Introducing these estimations in the previous computations, we have
\begin{align*}
\E[ \Gamma^{2}_t\e^{4p}_t ]\leq&
4p \int_{0}^t{\E\left[\Gamma_s^{2}\e^{4p-1}_s\left(b(X_{\eta(s)})-b(X_s)\right)  \right]ds}  \\
&+4p\|b'\|_\infty \int_{0}^t{\E\left[\Gamma_{\eta(s)}^{2}\e_{\eta(s)}^{4p}\right]ds} \\
&+\left(4p\|b'\|_\infty +(4p-2)(4p-1)\right)\E\left[ \int_{0}^t{\Gamma_s^{ 2}\e^{4p}_sds}\right]\\
&+4p(4p-1)\E\left[ \int_{0}^t{\Gamma_s^{2}\e^{4p-2}_s\left(\sigma X_s^\alpha - \sigma\X_s^{\alpha}\right)^2 ds}\right]\\ 
&-\E\left[ \int_{0}^t{2\bb_s\Gamma_s^{ 2}\e^{4p}_sds}\right]
  + C\dt^{4p}.
\end{align*}
Now we use the particular form of the weight process. Since for all $\tfrac{1}{2}\leq \alpha\leq 1$,  for all $x\geq0$, $y\geq0$, 
\begin{equation}\label{theAlgebraicTrick}
|x^\alpha-y^\alpha|(x^{1-\alpha}+y^{1-\alpha})\leq 2\alpha|x-y|,
\end{equation}
we have 
$$\E\int_{0}^t{\Gamma_s^{2}\e^{4p-2}_s\left(\sigma X_s^\alpha - \sigma\X_s^{\alpha}\right)^2 ds}\leq \E \int_{0}^t{\Gamma_s^{2}\e^{4p}_s \frac{4\alpha^2\sigma^2}{ X_s^{2(1-\alpha)}} ds},$$
and then, from the definition of $\bb$ in \eqref{def:beta}, we conclude 
\begin{align*}
\E[\Gamma^{2}_t\e^{4p}_t] &\leq
4p \int_{0}^t{\E\left[\Gamma_s^{2}\e^{4p-1}_s\left(b(X_{\eta(s)})-b(X_s)\right)\right]ds} +4p\|b'\|_\infty \int_{0}^t{\E\left[\Gamma_{\eta(s)}^{2}\e_{\eta(s)}^{4p}\right]ds}  + C\dt^{4p}.
\end{align*}
from where 
\begin{align*}
 \E[ \Gamma^{2}_t\e^{4p}_t] &\leq 4p \int_{0}^t{\E\left[\Gamma_s^{ 2}\e^{4p-1}_s\left(b(X_{\eta(s)})-b(X_s)\right)\right]ds} +4p\|b'\|_\infty \int_{0}^t{\sup_{0\leq u\leq s}\E[\Gamma_{u}^{ 2}\e_{u}^{4p}] ds}  + C\dt^{4p}.
\end{align*}
\end{proof}

\begin{proof}[Proof of Lemma \ref{lem:driftScaledError}]
By integration by parts 
\begin{equation}\label{theBadGuy}
\begin{aligned}
\E\left[ \Gamma^{2}_s\e^{4p-1}_s (b(X_{\eta(s)})-b(X_s))\right] &={-\E\int_{\eta(s)}^s {{2}\Gamma_u\e^{4p-1}_u (b(X_{\eta(s)})-b(X_u)) \bb_u\Gamma_udu} }\\
&\quad +\E\int_{\eta(s)}^s {\Gamma^{2}_ud\left(\e^{4p-1}_u (b(X_{\eta(s)})-b(X_u)) \right)}.   
\end{aligned}
\end{equation}
Applying H\"older's Inequality to the first term in the right-hand side we have
\begin{align*}
\left| \E\int_{\eta(s)}^s {\Gamma^{2}_u\e^{4p-1}_u[b(X_{\eta(s)})-b(X_u)]\bb_udu} \right| 
 &\leq\int_{\eta(s)}^s  \left(\E\left[\Gamma^{2}_u\e^{4p}_u\right] \right)^{1-\frac{1}{4p}}\left(\E\left[ |b(X_{\eta(s)})-b(X_u)| ^{4p}\bb_u^{4p}\right] \right)^{\frac{1}{4p}}du. 
\end{align*}
Recalling the Remark \ref{rem:integrabilityOfBeta}-$(ii)$, we have that $\E[\bb_u^{8p}]$ is finite, so applying Lemma \ref{lem:localErrorOfTheScheme}, 
\begin{align*}
\E\left[ |b(X_{\eta(s)})-b(X_u)|^{4p}\bb_u^{4p}\right]&
\leq \sqrt{\E\left[|b(X_{\eta(s)})-b(X_u)|^{8p}\right]{\E[\bb_u^{8p}] } }\leq C\dt^{2p}.
\end{align*}
Then, 
$$\left| \E\int_{\eta(s)}^s {\Gamma^{2}_u\e^{4p-1}_u[b(X_{\eta(s)})-b(X_u)]\bb_udu} \right| \leq C\left(\sup_{u\leq s}{\E[\Gamma^{2}_u\e^{4p}_u] }\right)^{1-\frac{1}{4p}}\dt^{3/2}. $$
Applying the It\^o's Formula to the second term in the right-hand  side of \eqref{theBadGuy}, and taking expectation we get
\begin{equation}\label{eq:ItoForb}
\begin{split}
\E\int_{\eta(s)}^s \Gamma^{2}_u&d\Big(\e^{4p-1}_u[b(X_{\eta(s)})-b(X_u)]\Big)\\
& = {-  \sigma\E\int_{\eta(s)}^s {\Gamma^{2}_u\e^{4p-1}_u\left(b'(X_u)b(X_u)+ \frac{\sigma^2}{2}b''(X_u) X^{2\alpha}_u  \right)du}}\\
&\quad{+ (4p-1)\E\int_{\eta(s)}^s {\Gamma^{2}_u\e^{4p-2}_u (b(X_{\eta(s)})-b(X_u)) \left\{\sgn(\Z{u})b(\Xe{s})-b(X_u)\right\}du}}\\
 &\quad{+\frac{\sigma^2}{2}(4p-1)(4p-2)\E\int_{\eta(s)}^s {\Gamma^{2}_u\e^{4p-3}_u (b(X_{\eta(s)})-b(X_u))  \Sigma_u^2du}}\\
 &\quad{- (4p-1)\sigma^2\E\int_{\eta(s)}^s {\Gamma^{2}_u\e^{4p-2}_ub'(X_u)X_u^\alpha \Sigma_u du}}\\
&\quad{+\frac{(4p-1)}{2}\E\int_{\eta(s)}^s {\Gamma^{2}_u\e^{4p-2}_u (b(X_{\eta(s)})-b(X_u)) dL_u^0(\X)}}\\
&=: I_1+I_2+I_3 + I_4+I_5.
\end{split}
\end{equation}
By the finiteness of the moment of $X$, the linear growth of $b$, and the polynomial growth of $b''$, applying Holder's inequality, we have
\begin{align*}
|I_1| 
&\leq C\int_{\eta(s)}^s {\left(\E[\Gamma^{2}_u\e^{4p}_u]\right)^{1-\frac{1}{4p}}
\left(\E\left[|b'(X_u)b(X_u)+ \frac{\sigma^2}{2}b''(X_u) X^{2\alpha}_u|^{4p}\right]\right)^{\frac{1}{4p}}du}\\
&\leq  C\left(\sup_{u\leq s}{\E[\Gamma^{2}_u\e^{4p}_u] }\right)^{1-\frac{1}{4p}}\dt. 
\end{align*}
For the bound of $I_2$, since $b$ is Lipschitz, and $\sgn(x)=1-2\ind{\{x<0\}}$,  we have
\begin{align*}
|I_2| \leq & C\int_{\eta(s)}^s {\E\left[ \Gamma^{2}_u\e^{4p-2}_u|X_{\eta(s)}-X_u||\Xe{s}-\X_{u}|\right]du}\\
& +C\int_{\eta(s)}^s {\E\left[ \Gamma^{2}_u\e^{4p-1}_u|X_{\eta(s)}-X_u|\right]du} + \int_{\eta(s)}^s{\E | R^{(2)}_u\ind{\{\Z{u}<0\}}| du} \\
\leq &  C\left(\sup_{u\leq s}{\E[\Gamma^{2}_u\e^{4p}_u]}\right)^{1-\frac{1}{2p}}\dt^2 +  C\left(\sup_{u\leq s}{\E[\Gamma^{2}_u\e^{4p}_u]}\right)^{1-\frac{1}{4p}}\dt^{3/2} + C\dt^{4p}
\end{align*}
Where again, all the terms multiplied by $\ind{\{\Z{u}<0\}}$ are putted in the rest $R^{(2)}_u$, and the expectation of the product is bounded with Lemma \ref{lem:ZbarRemainsPositive}.

In a similar way for the bound of $I_3$, decomposing $\Sigma_u$ with $\sgn(x)=1-2\ind{\{x<0\}}$,
\begin{align*}
|I_3| \leq &C\int_{\eta(s)}^s {\E\left[ \Gamma^{2}_u\e^{4p-3}_u|X_{\eta(s)}-X_u|D_s(\X)^2\right]du}\\
&+ C\int_{\eta(s)}^s {\E\left[ \Gamma^{2}_u\e^{4p-3}_u|X_{\eta(s)}-X_u|\left(\sigma\X_u-\sigma X_u^\alpha \right) ^2\right] du} +  \int_{\eta(s)}^s{\E | R^{(3)}_u\ind{\{\Z{u}<0\}} | du}.
\end{align*}
For the first term in the right-hand side we have
\begin{align*}
\E\left[ \Gamma^{2}_u\e^{4p-3}_u|X_{\eta(s)}-X_u|D_s(\X)^2\right]\leq &
\left( \E[\Gamma^{2}_u\e^{4p}_u]\right)^{1-\frac{3}{4p}}\left( \E[|X_{\eta(s)}-X_u|^{4p}]\right)^{\frac{1}{4p}} 
\left( \E[D_s(\X)^{4p}]\right)^{\frac{1}{2p}}\\
\leq & \left( \sup_{u\leq s}\E[\Gamma^{2}_u\e^{4p}_u] \right)^{1-\frac{3}{4p}}\dt^{5/2},
\end{align*}
due to the bound for the increments of the exact process and Lemma \ref{lem:CorrectedLocalError}. For the second term,  applying  \eqref{theAlgebraicTrick}, and noting that from Remark \ref{rem:integrabilityOfBeta}-$(ii)$, the exact process has negative moments up to of order $12 p+2$
\begin{align*}
&\E[ \Gamma^{2}_u\e^{4p-3}_u |X_{\eta(s)}-X_u| (\sigma\X_u-\sigma X_u^\alpha) ^2]\\
&\leq C\E[\Gamma^{2}_u\e^{4p-1}_u|X_{\eta(s)}-X_u|\frac{1}{X_u^{2(1-\alpha)}}]\\
&\leq C\left(\E [ \Gamma^{2}_u\e^{4p}_u]\right)^{1-\frac{1}{4p}}\left( \E[|X_{\eta(s)}-X_u|^{8p}]\right)^{1/8p}\left(\E\left[\frac{1}{X_u^{16(1-\alpha)p}}\right]\right)^{1/8p}\\
&\leq  \left(\sup_{u\leq s}\E[\Gamma^{2}_u\e^{4p}_u]\right)^{1-\frac{1}{4p}}\dt^{{1}/{2}}.
\end{align*}
We control the third term in the right-hand side in the bound for $|I_3|$ using  again Lemma \ref{lem:ZbarRemainsPositive}, so
\begin{align*}
|I_3| &\leq C \left(\sup_{u\leq s}\E[ \Gamma^{2}_u\e^{4p}_u]\right)^{1-\frac{3}{4p}}\dt^{7/2}+  \left(\sup_{u\leq s}\E[ \Gamma^{2}_u\e^{4p}_u]\right)^{1-\frac{1}{4p}}\dt^{3/2}+C\dt^{4p}.
\end{align*}

Now we bound $|I_4|$. 
\begin{align*}
| I_4|&\leq C\int_{\eta(s)}^s \E | \Gamma^{2}_u\e^{4p-2}_u D_u(\X) b'(X_u)X_u^\alpha  | du \\
&\quad+ C\int_{\eta(s)}^s {\E | \Gamma^{2}_u\e^{4p-2}_u (\sigma\X_u-\sigma X_u^\alpha) b'(X_u)X_u^\alpha | du}+  \int_{\eta(s)}^s{\E | R^{(4)}_u\ind{\{\Z{u}<0\}} | du}.
\end{align*}
We control the first term in the right-hand side using Hölder's inequality, Lemma \ref{lem:CorrectedLocalError} and the control in the moments of the exact process for all $0\leq u\leq s$
\begin{align*}
 \E|\Gamma^{2}_u\e^{4p-2}_u D_u(\X) b'(X_u)X_u^\alpha| &\leq
 (\E[\Gamma^{2}_u\e^{4p}_u] )^{1-\frac{1}{2p}} (\E[D_s(\X)^{4p}])^{\frac{1}{4p}} (\E[ b'(X_u)^{4p}X_u^{4p\alpha}] )^{\frac{1}{4p}}\\
 \leq& C \left(\sup_{u\leq s}[\Gamma^{2}_u\e^{4p}_u] \right)^{1-\frac{1}{2p}}\dt.
\end{align*}
For the second term in the right-hand side of the bound for $|I_4|$, we use one more time \eqref{theAlgebraicTrick}, and  the existence of negative moments of the exact process $X$, and then 
\begin{align*}
\E | \Gamma^{2}_u\e^{4p-2}_u\left[\sigma\X_u-\sigma X_u^\alpha \right]b'(X_u)X_u^\alpha|   &\leq
C\E[\Gamma^{2}_u\e^{4p-1}_u\frac{1}{X_u^{(1-\alpha)}}b'(X_u)X_u^\alpha] \leq C \left( \sup_{u\leq s}\E[ \Gamma^{2}_u\e^{4p}_u] \right)^{1-\frac{1}{4p}}.
\end{align*}
To control the third term in the right-hand side of the bound for $|I_4|$ we use Lemma \ref{lem:ZbarRemainsPositive} just as before. So
$$|I_4| \leq  C \left(\sup_{u\leq s}\E[ \Gamma^{2}_u\e^{4p}_u]\right)^{1-\frac{1}{2p}}\dt^2 + C \left(\sup_{u\leq s}\E[ \Gamma^{2}_u\e^{4p}_u]\right)^{1-\frac{1}{4p}}\dt + C\dt^{4p}.$$
Finally, 
\begin{align*}
|I_5|
&=\frac{(4p-1)}{2}\E\int_{\eta(s)}^s {\Gamma^{2}_uX^{4p-2}_u |b(X_{\eta(s)})-b(X_u)| dL_u^0(\X)}\\
&\leq C\E\left[ \sup_{u\leq s}\left[1+X_u^{4p-1}\right] L_T^0(\X)\right]\\
&\leq C\sqrt{ \E\left[ \sup_{u\leq s}\left[1+X_u^{4p-1}\right]^2\right]} \sqrt{ \E [L_T^0(\X)^2]} \leq C\dt^{4p}, 
\end{align*}
the last inequality comes from Lemmas \ref{lem:LemmaMomentsOfX} and \ref{lem:Control2ndMomentLocalTime}.

Putting all the last calculations in \eqref{theBadGuy} we obtain 
\begin{align*}
\left| \E[ \Gamma^{2}_s\e^{4p-1}_s (b(X_{\eta(s)})-b(X_s))] \right| &\leq C\left(\sup_{u\leq s}{\E[\Gamma^{ 2}_u\e^{4p}_u] }\right) ^{1-\frac{1}{4p}}\dt^{3/2} + C\left( \sup_{u\leq s}{\E [\Gamma^{ 2}_u\e^{4p}_u] }\right)^{1-\frac{1}{4p}}\dt  \\
&\quad+C\left(\sup_{u\leq s}{\E [\Gamma^{ 2}_u\e^{4p}_u] }\right)^{1-\frac{1}{2p}}\dt^2 +C \left(\sup_{u\leq s}\E[\Gamma^{ 2}_u\e^{4p}_u] \right)^{1-\frac{3}{4p}}\dt^{7/2}\\
&\quad+ C\dt^{4p}.
\end{align*}
Applying Young's Inequality in all terms in the right, we get the desired inequality  \eqref{eq:boundDriftTerm}.  
\end{proof}

\begin{rem}\label{rem:ItoForb}
Proceeding as in the Proof of Lemma \ref{lem:driftScaledError}, if $p=1$ or $p\geq3/2$, is not difficult to prove
\begin{equation}\label{eq:boundForDriftError}
\left| \E\left[ \e^{2p-1}_s (b(X_{\eta(s)})-b(X_s)) \right] \right| \leq C \sup_{u\leq s}{\E[\e^{2p}_u] }+ C\dt^{2p}.
\end{equation}
Notice that \eqref{eq:boundForDriftError} is  similar to \eqref{eq:boundDriftTerm} with the process $\g=1$. 
The restriction on  $p$  comes from the following observation.  Applying the It\^o's Lemma to the function $(x,y) \mapsto x^{2p-1}y$ for $p\geq 1$, with the couple of processes $(\e_\cdot,b(X_{\eta(\cdot)})-b(X_\cdot))$,  in the analogous of the identity  (3.9) with $\Gamma=1$, it will appear a term of the form
$$\E\int_{\eta(s)}^{s}{\e_u^{2p-3}[b(X_{\eta(s)})-b(X_u)]\Sigma_u^2du}.$$
The restriction $p\geq3/2$ avoids  the situation where ${2p-3}$ is negative.
\end{rem}

\subsection{Proof of  Theorem  \ref{mainTheorem} }

We start by controlling the $L^{2p}$-error. First, assume $p=1$ or $p\geq 3/2$. By the It\^o's formula we have
\begin{align*}
\E[ \e^{2p}_t]&=
2p\E\int_{0}^t{\e^{2p-1}_s\left\{\sgn(\Z{s})b(\Xe{s})-b(X_s)\right\}ds}+ 2p\E\int_{0}^t{\e^{2p-1}_sdL_s^0(\X)  } +p(2p-1)\E\int_{0}^t{\e^{2p-2}_s   \Sigma_s^2ds}.
\end{align*}
As we have seen before, $\E\int_{0}^t{ \e_s^{2p-1}dL_s^0(\X)} \leq C\dt^{2p},$ and  $\sgn(x)=1-2\ind{\{x<0\}}$, so
\begin{equation}\label{eq:fisrtStepError}
\begin{split}
 \E[ \e^{2p}_t]&\le
2p\E\int_{0}^t{ \e^{2p-1}_s\left[b(\Xe{s})-b(X_{\eta(s)})\right]ds}   \\
&\quad+2p\E\int_{0}^t{ \e^{2p-1}_s\left[b(X_{\eta(s)})-b(X_s)\right]ds}\\
&\quad+8p(2p-1)\E\int_{0}^t{ \e^{2p-2}_s\left[\sigma\X_s^\alpha-\sigma X_s^\alpha \right]^2ds}\\
&\quad+8p(2p-1)\E\int_{0}^t{ \e^{2p-2}_sD_s(\X)^2ds} + \E\int_{0}^t{R_s\ind{\{\Z{s}<0\}}ds} + C\dt^{2p},
 \end{split}
\end{equation}
where
$R_s = 4p(2p-1)\e_s^{2p-2}\left[ \sigma\Xe{s}^{\alpha}+ \alpha\sigma^2\Xe{s}^{2\alpha-1}\left( W_s - W_{\eta(s)} \right) \right] + 8p\e_s^{2p-1}b(\Xe{s}).$ 
If we use the Lipschitz property of $b$, and Young's inequality in the first term in the right of \eqref{eq:fisrtStepError}, Lemma \ref{lem:CorrectedLocalError} in the fourth one, and Lemma~\ref{lem:ZbarRemainsPositive}  in the fifth one, we have
\begin{equation}\label{eq:SecondStepError}
\begin{split}
 \E[ \e^{2p}_t]&\leq C\int_{0}^t{\sup_{u\leq s} \E[\e^{2p}_u] ds}   +2p\int_{0}^t{ \E[\e^{2p-1}_s (b(X_{\eta(s)})-b(X_s))]ds}  \\
&\quad+8p(2p-1)\int_{0}^t{ \E[\e^{2p-2}_s (\sigma \X_s^\alpha-\sigma X_s^\alpha)^2 ] ds} + C\dt^{2p}.
\end{split}
\end{equation}
And, according to Remark \ref{rem:ItoForb}, we have
$$\left| \E[ \e^{2p-1}_s (b(X_{\eta(s)})-b(X_s)) ] \right|\leq C \sup_{u\leq s}{\E[\e^{2p}_u] } + C\dt^{2p}.$$ 

On the other hand, using again \eqref{theAlgebraicTrick}, we have
$$\E [ \e^{2p-2}_s(\sigma \X_s^\alpha-\sigma X_s^\alpha)^2]\leq C\E[  \e^{2p}_s {X_s^{-2(1-\alpha)}} ] =C\E [ \Gamma_s \e^{2p}_s{X_s^{-2(1-\alpha)}}\Gamma_s^{-1}] ,$$
and applying Cauchy-Schwartz inequality,
\begin{align*}
\E[ \Gamma_s\, \e^{2p}_s\frac{1}{X_s^{2(1-\alpha)}}\Gamma_s^{-1}]
\leq&  (\E[ \Gamma_s^{2} \e^{4p}_s])^{\frac{1}{2}}
\left(\E[\frac{1}{X_s^{4(1-\alpha)}}\Gamma_s^{-2}]\right)^{\frac{1}{2}}.
\end{align*}
The first term in the right-hand side is the weight error controlled by Proposition \ref{LemmaScaledError}. To control the second one, let us recall Remark  \ref{rem:integrabilityOfBeta}. For $\alpha>\tfrac{1}{2}$, the exact process and the weight process $\g$ have negative moments of any order, therefore the second term in the last inequality is bounded by a constant. On the other hand, for $\alpha=\tfrac{1}{2}$ we need a finer analysis. From the second point in Remark  \ref{rem:integrabilityOfBeta}, the $12p+2$-th negative moment of the exact process is finite, and since 
$$16p(4p-1)\,2\frac{6p+1}{6p}  <(12p+2)^2,$$
according with the third point of Remark  \ref{rem:integrabilityOfBeta}, the $2{(6p+1)}/{6p}$-th negative moment of the  weight process $\g$ is also finite. Therefore, when $\alpha=\tfrac{1}{2}$, 
\begin{align*}
\E\left[\frac{1}{X_s^{4(1-\alpha)}}\Gamma_s^{-2}\right]&=\E\left[\frac{1}{X_s^2}\Gamma_s^{-2}\right]\leq \left( \E\left[\frac{1}{X_s^{12p+2}}\right]\right)^{\frac{1}{6p+1}}\left( \E\left[\Gamma_s^{-2\frac{6p+1}{6p}}\right]\right)^{\frac{6p}{6p+1}} \leq C,
\end{align*}
and then in any case
$$\E\left[ \Gamma_s \e^{2p}_s\frac{1}{X_s^{2(1-\alpha)}}\Gamma_s^{-1}\right] \leq C\dt^{2p}.$$
Introducing all the last computations in  \eqref{eq:SecondStepError}  we get
$$\E[  \e^{2p}_t] \leq   C\int_{0}^t \left( \sup_{u\leq s}\E\left[ \e^{2p}_u\right] \right) ds  + C\dt^{2p}. $$
Since the right-hand side is increasing, thanks to Gronwall's Inequality we have, for $p=1$ or $p\geq 3/2$, 
$$\sup_{0\leq t\leq T}\left( \E[ \e^{2p}_t]\right)^{\frac{1}{2p}} \leq C\dt.$$
We extend to $p\in(1,3/2)$, thanks to Jensen's inequality
$$ (\E[ \e^{2p}_t])^{\frac{1}{2p}} \leq  (\E[ \e^{\frac{6}{2}}_t])^{\frac{2}{6}}\leq C\dt,$$
and we conclude that $ (\E[ \e^{2p}_t])^{\frac{1}{2p}} \leq  C\dt$ for all $p\geq 1$ satisfying Remark \ref{rem:hypothesisForB0norm2p}-$(i)$. 

Now we control the $L^p$-error. For $\alpha =\tfrac{1}{2}$ and $p\geq2$, denoting $p'=\frac{p}{2}\geq1$, 
$$\sup_{0\leq t\leq T}\left( \E\left[ \e^{{p}}_t\right]\right)^{\frac{1}{{p}}} =\sup_{0\leq t\leq T}\left( \E[ \e^{2{p'}}_t]\right)^{\frac{1}{2{p'}}} \leq C\dt.$$
Hypothesis  \ref{hypo:2}-$(i)$  gives 
$$b(0)>\frac{3(2p+1)\sigma^2}{2}=\frac{3(4p'+1)\sigma^2}{2}.$$
Since \eqref{hypothesisForB0norm2p} in  Remark \ref{rem:hypothesisForB0norm2p} is satisfied, 
we can control the $L^{2p'}(\Omega)$-norm of the error and then
$$\sup_{0\leq t\leq T}\left( \E[ \e^{{p}}_t]\right)^{\frac{1}{{p}}} =\sup_{0\leq t\leq T}\left( \E[ \e^{2{p'}}_t]\right)^{\frac{1}{2{p'}}} \leq C\dt.$$
If $p\in[1,2)$, Hypothesis  \ref{hypo:2}-$(i)$ is $b(0)>15\tfrac{\sigma^2}{2}$, which is enough to bound the $L^2(\Omega)$-norm of the error, and then from Jensen's inequality
$$\sup_{0\leq t\leq T}\left( \E[ \e^{{p}}]\right)^{\frac{1}{{p}}} \leq \sup_{0\leq t\leq T}\left( \E[ \e^{2}_t]\right)^{\frac{1}{2}} \leq C\dt.$$
The case $\alpha>1/2$ is easier. Since the Hypothesis in the parameters for this case does not depend on $p$, we can conclude for any $p\geq1$ from Jensen's inequality and the control for the $L^{2p}(\Omega)$-norm of the error.

\begin{rem}
Let us mention an example of  extension of our convergence result, based on simple transformation method:  consider the $3/2$-model, namely the solution of 
$$r_t = r_0+\int_{0}^{t}{c_1r_s(c_2-r_s)}ds + \int_{0}^{t}{c_3\,r_s^{3/2}dW_s}.$$
Applying the Itô's Formula to $v_t = f(r_t)$, with $f(x)=x^{-1}$, we have
$$v_t = v_0+\int_{0}^{t}\left(c_1+c_3^2-c_1c_2v_s\right)ds + \int_{0}^{t}{c_3\, v_s^{1/2}dB_s},$$
where $B_s=-W_s$ is a Brownian motion.  We can approximate $v$ with the SMS $\bar{v}$, and then define $\bar{r}_t := 1/\bar{v}_t$. Then we can deduce  the strong convergence with rate one of $\bar{r}_t$ to $r_t$ from our previous results.

Transformation methods can be used in a more exhaustive manner, in the context of CEV-like SDEs and we refer to \cite{Chassagneux:2015aa} for approximation results and examples, using this approach. 
\end{rem}

\subsection{Strong Convergence of the Projected Milstein Scheme}\label{sec:ConvergencePMS}
The Projected Milstein Scheme (PMS) is defined by 
\begin{align*}
\hX_{t_{k}} =
\Big(\hX_{t_{k-1}}+b(\hX_{t_{k-1}})\dt+\sigma\hX_{t_{k-1}}^\alpha(W_{t_k}-W_{t_{k-1}})+\dfrac{\alpha\sigma^2\hX_{t_{k-1}}^{2\alpha-1}}{2}\left[(W_{t_k}-W_{t_{k-1}})^2 -\dt\right]\Big)^+, ~\hX_0=x_0
\end{align*} 
where for all $x\in\R$, $(x)^+=\max(0,x)$. The continuous time version of the (PMS) is given by
\begin{equation}\label{eq:defPMS}
\hX_{t} =
\Big(\hX_{\eta(t)}+b(\hX_{\eta(t)})\dt+\sigma\hX_{\eta(t)}^\alpha(W_{t}-W_{\eta(t)})+\dfrac{\alpha\sigma^2\hX_{\eta(t)}^{2\alpha-1}}{2}\left[(W_{t}-W_{\eta(t)})^2 -\dt\right]\Big)^+.
\end{equation} 
Notice that for all $t\in[0,T]$, $0\leq \hX_t\leq\X_t$, then the positive moments of the PMS are bounded (see Lemma \ref{lem:finitenessOfTheMomentsOfXbar}). 

To obtain a strong convergence rate for the PMS, we first show that the PMS and the SMS coincide with a large probability. 
\begin{lem}\label{lem:PMSandSMSalmostTheSame} Let us consider the stopping time  $\tau= \inf\{s\geq0:\X_s\neq\hX_s\}$. 
Assume that $b(0)>2\alpha(1-\alpha)^2\sigma^2$ and $\dt\leq 1/(2K(\alpha))$. Then for any $p\geq1$,
$$\P\left( \tau \leq T\right) \leq C\dt^p.$$
\end{lem}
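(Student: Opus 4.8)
The plan is to show that the two schemes can differ only through the projection/reflection step, which is triggered precisely when the one-step increment process $\Z{}$ becomes nonpositive. The key observation is that the SMS and the PMS are built from the \emph{same} one-step formula inside the absolute value/positive-part operators: at a grid point $t_k$, writing $Y_{t_{k}}$ for the common argument
$$Y_{t_k} = \X_{t_{k-1}}+b(\X_{t_{k-1}})\dt+\sigma\X_{t_{k-1}}^\alpha(W_{t_k}-W_{t_{k-1}})+\frac{\alpha\sigma^2}{2}\X_{t_{k-1}}^{2\alpha-1}\big[(W_{t_k}-W_{t_{k-1}})^2-\dt\big],$$
we have $\X_{t_k}=|Y_{t_k}|$ and $\hX_{t_k}=(Y_{t_k})^+$, so the two coincide exactly as long as $Y_{t_k}\geq 0$. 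Hence the first time they disagree is the first time this increment process goes strictly negative. In the continuous-time versions this is governed by the sign of $\Z{}$, so I expect $\{\tau\leq T\}$ to be contained (up to the inductive argument below) in the event that $\Z{s}\leq 0$ for some $s\leq T$.

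First I would make the reduction precise by induction on the grid index. As long as $\X_{t_j}=\hX_{t_j}$ for all $j\leq k-1$, the two schemes feed the \emph{same} value $\X_{t_{k-1}}=\hX_{t_{k-1}}$ into the one-step map, so $Y_{t_k}$ is genuinely common and $\X_{t_k}\neq\hX_{t_k}$ forces $Y_{t_k}<0$, i.e. $\Z{t_k}<0$ in the continuous notation. Therefore on $\{\tau\leq T\}$ there must be a first grid interval on which the underlying $\Z{}$ dips to or below $0$, and I can bound
$$\P(\tau\leq T)\;\leq\;\P\Big(\exists\,k\in\{0,\ldots,N-1\}:\inf_{t_k<s\leq t_{k+1}}\Z{s}\leq 0\Big)\;\leq\;\sum_{k=0}^{N-1}\P\Big(\inf_{t_k<s\leq t_{k+1}}\Z{s}\leq 0\Big).$$
Here is where the heavy lifting is already done for me: under the stated hypotheses $b(0)>2\alpha(1-\alpha)^2\sigma^2$ and $\dt\leq 1/(2K(\alpha))$, Lemma~\ref{lem:ZbarRemainsPositive} gives the uniform estimate $\P(\inf_{t_k<s\leq t_{k+1}}\Z{s}\leq 0)\leq \exp(-\gamma/\dt)$ for every $k$. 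Summing over the $N=T/\dt$ intervals yields
$$\P(\tau\leq T)\;\leq\;N\exp\!\left(-\frac{\gamma}{\dt}\right)\;=\;\frac{T}{\dt}\exp\!\left(-\frac{\gamma}{\dt}\right).$$

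The final step is to absorb this super-polynomially small bound into the desired $C\dt^p$. Since $\dt^{-1}e^{-\gamma/\dt}$ decays faster than any power of $\dt$ as $\dt\to 0$, for each fixed $p\geq 1$ there is a constant $C$ (depending on $p$, $\gamma$, and $T$, but not on $\dt$) with $\frac{T}{\dt}e^{-\gamma/\dt}\leq C\dt^{p}$ for all $\dt\leq 1/(2K(\alpha))$; this is a one-line consequence of $\sup_{x>0}x^{p+1}e^{-\gamma x}<\infty$ applied to $x=1/\dt$. I do not anticipate a genuine obstacle in the calculation — the single point requiring care is the inductive matching argument, ensuring that at the first disagreement the increment fed to both schemes is truly identical so that the discrepancy is attributable solely to the sign of $\Z{}$ and thus to the reflection event controlled by Lemma~\ref{lem:ZbarRemainsPositive}.
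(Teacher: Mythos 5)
Your proof is correct and follows essentially the same route as the paper: decompose $\{\tau\leq T\}$ over the $N$ grid intervals, note that on the interval of first disagreement the two schemes share the same increment $\Z{}$ so that disagreement forces $\Z{}$ to become nonpositive, invoke Lemma \ref{lem:ZbarRemainsPositive} for the uniform bound $\exp(-\gamma/\dt)$, and absorb $(T/\dt)\exp(-\gamma/\dt)$ into $C\dt^p$. The inductive matching step you single out is exactly what the paper encodes by writing $\P(\tau\in(t_k,t_{k+1}])=\P(\inf_{t_k<s\leq t_{k+1}}\Z{s}\leq 0,\ \X_{t_k}=\hX_{t_k})$.
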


\begin{proof}   
Notice that $\tau$ is almost surely strictly positive because both schemes start from the same deterministic initial condition $x_0$. On the other hand
$$\P\left( \tau \leq T\right) =  \sum_{i=0}^{N-1}{\P\left( \tau \in (t_{k},t_{k+1}]\right)},$$
and according to Lemma \ref{lem:ZbarRemainsPositive}
\begin{align*}
\P\left( \tau \in [t_{i},t_{i+1})\right)&=\P\left( \inf_{t_k< s\leq t_{k+1}}\Z{s}\leq0, \X_{t_k} = \hX_{t_k}\right) \leq\P\left( \inf_{t_k< s\leq t_{k+1}}\Z{s}\leq0\right) \leq \exp\left(- \frac{\gamma}{\dt}\right).
\end{align*}
So, 
$$\P\left( \tau \leq T\right) \leq  \frac{T}{\dt}\exp\left(- \frac{\gamma}{\dt}\right).$$
Since for any $p\geq 1$, there exists a constant $C_p$ such that $\exp(-\gamma/\dt)/\dt\leq C_p\dt^{p}$, we have
$$\P\left( \tau \leq T\right) \leq C\dt^p.$$
\end{proof}

\begin{corollary} Assume Hypotheses \ref{hypothesisH0} and \ref{hypo:2}.  Consider a maximum step size $\dmax(\alpha)$ defined in \eqref{eq:def_dt0_alpha}. Let $\XX$ be the process defined on \eqref{exactProcess} and $\XXh$ the  Projecter Milstein scheme given in \eqref{eq:defPMS}. 
Then for any $p\geq1$ that allows Hypotheses \ref{hypo:2}, there exists  a constant $C$ depending on $p$, $T$, $b(0)$, $\alpha$, $\sigma$, $K$,  and $x_0$, but not on $\dt$, such that for all $\dt\leq\dmax(\alpha)$, 
\begin{align}\label{eq:convergence-PMS}
\sup_{0\leq t\leq T}\left( \E[|X_t-\hX_t|^{p}] \right)^{\frac{1}{p}}\leq C\dt.
\end{align}
\end{corollary}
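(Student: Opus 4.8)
The plan is to deduce the convergence of the Projected Milstein Scheme (PMS) directly from the already-established convergence of the Symmetrized Milstein Scheme (SMS) in Theorem~\ref{mainTheorem}, exploiting the fact that the two schemes coincide except on an event of very small probability. The key structural observation is that on the complement of the event $\{\tau\leq T\}$, where $\tau=\inf\{s\geq0:\X_s\neq\hX_s\}$, the two schemes are pathwise identical, so the error $|X_t-\hX_t|$ differs from $|X_t-\X_t|$ only on $\{\tau\leq T\}$, whose probability decays faster than any polynomial in $\dt$ by Lemma~\ref{lem:PMSandSMSalmostTheSame}.

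First I would split the $L^p$-error using the decomposition over the two complementary events,
\begin{align*}
\E\left[|X_t-\hX_t|^{p}\right]
&= \E\left[|X_t-\hX_t|^{p}\ind{\{\tau> t\}}\right]
+\E\left[|X_t-\hX_t|^{p}\ind{\{\tau\leq t\}}\right].
\end{align*}
On the first event the paths agree up to time $t$, so $\hX_t=\X_t$ there and the first term is bounded by $\E[|X_t-\X_t|^{p}]\leq C\dt^p$ directly from Theorem~\ref{mainTheorem}. The second term is where the small-probability estimate enters: I would apply H\"older's inequality with conjugate exponents to separate the error from the indicator, writing for some $r>1$,
\begin{align*}
\E\left[|X_t-\hX_t|^{p}\ind{\{\tau\leq t\}}\right]
\leq \left(\E\left[|X_t-\hX_t|^{pr}\right]\right)^{1/r}
\left(\P(\tau\leq T)\right)^{1-1/r}.
\end{align*}

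To close this, I would bound the first factor using the triangle inequality $|X_t-\hX_t|\leq X_t+\hX_t\leq X_t+\X_t$ together with the uniform moment bounds for $X$ (Lemma~\ref{lem:LemmaMomentsOfX}) and for $\X$ (Lemma~\ref{lem:finitenessOfTheMomentsOfXbar}), which give $(\E[|X_t-\hX_t|^{pr}])^{1/r}\leq C$ uniformly in $t$ and $\dt$. For the second factor, Lemma~\ref{lem:PMSandSMSalmostTheSame} yields $\P(\tau\leq T)\leq C\dt^{q}$ for \emph{any} $q\geq1$, so choosing $q$ large enough that $q(1-1/r)\geq p$ makes this contribution of order $\dt^{p}$ (in fact of arbitrarily high order). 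Combining both pieces gives $\E[|X_t-\hX_t|^{p}]\leq C\dt^p$ uniformly in $t\in[0,T]$, which is the claimed rate~\eqref{eq:convergence-PMS}.

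The main obstacle, such as it is, is ensuring that the moment factor $(\E[|X_t-\hX_t|^{pr}])^{1/r}$ is genuinely bounded uniformly in $\dt$; this is not automatic for a generic scheme but is guaranteed here by the sandwich $0\leq\hX_t\leq\X_t$ noted after the definition of the PMS, which reduces the control of moments of $\hX$ to those of $\X$ already established in Lemma~\ref{lem:finitenessOfTheMomentsOfXbar}. Since Lemma~\ref{lem:PMSandSMSalmostTheSame} provides super-polynomial decay of $\P(\tau\leq T)$, the choice of $r$ and $q$ is essentially free and introduces no constraint beyond those already imposed by Hypotheses~\ref{hypothesisH0} and~\ref{hypo:2} through the invocation of Theorem~\ref{mainTheorem}. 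Thus the corollary follows with essentially no new analytic work, the convergence rate being inherited from the SMS and the discrepancy between the schemes being negligible at any polynomial order.
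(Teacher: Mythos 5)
Your proposal is correct and follows essentially the same route as the paper: decompose over the event $\{\tau\leq T\}$ and its complement, use Theorem~\ref{mainTheorem} on the complement, and absorb the exceptional event via Lemma~\ref{lem:PMSandSMSalmostTheSame} together with the moment bounds inherited from $0\leq\hX_t\leq\X_t$. The only cosmetic difference is that the paper applies Cauchy--Schwarz (your H\"older argument with $r=2$ and $q=2p$), so no further comment is needed.
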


\begin{proof} Notice that for all $t\in[0,T]$, with $\tau= \inf\{s\geq0:\X_s\neq\hX_s\}$, 
\begin{align*}
\E [ |\hX_t-X_t|^p] &= \E[|\hX_t-X_t|^p\ind{\{\tau\leq T\}}] +\E[ |\hX_t-X_t|^p\ind{\{\tau> T\}}]\\
& \quad \leq  \sqrt{\E[ |\hX_t-X_t|^{2p}] \P\left(\tau\leq T\right)} +\E[|\X_t-X_t|^p]\leq C\dt^p 
\end{align*}
where the last inequality comes from Lemma \ref{lem:PMSandSMSalmostTheSame} and Theorem \ref{mainTheorem}.
\end{proof}

\section{Numerical Experiments and Conclusion}\label{sec:numExp}

We start this section with the analysis  of two  numerical experiments. The first one aims to study empirically the strong rate of convergence of the SMS in comparison with other  schemes proposed the literature. The second one aims to study the impact of including the SMS in a Multilevel Monte Carlo application.

\subsection{Empirical study of the strong rate of converge}

In this experiment we compute the error of the  schemes as a function of the step size $\dt$  for different values of the parameters $\alpha$ and $\sigma$. 

For $\alpha>\tfrac{1}{2}$ we compare the SMS with the Symmetrized Euler Scheme (SES) introduced in \cite{BERKAOUI:2008fk},  and with the Balanced Milstein Scheme (BMS) presented in \cite{Kahl:2006ab}.  
Whereas for $\alpha = \tfrac{1}{2}$, in addition to the aforementioned schemes, we will also compare SMS with the Modified Euler Scheme (MES) proposed in \cite{Chassagneux:2015aa}, and with the Alfonsi Implicit Scheme (AIS) proposed in \cite{Alfonsi:2005aa}. 

Let us first, shortly review those different schemes. 

\paragraph{Alfonsi Implicit Scheme (AIS). } 
Proposed in \cite{Alfonsi:2005aa}, the AIS can be applied to equation \eqref{exactProcess} when the drift is a linear function. A priori, the AIS can be applied for $\alpha\in[\tfrac{1}{2},1)$, but is relevant to observe that only when $\alpha=\tfrac{1}{2}$, the AIS is in fact an explicit scheme (also know as drift-implicit square-root Euler approximations ) whereas in any other case is not. This implies that in order to compute the AIS for $\alpha>\tfrac{1}{2}$, at each time step it is necessary to solve numerically a non linear equation. This extra step in the implementation of the scheme brings questions about the impact of the error of this subroutine in the error of the scheme, and about the computing performance of the scheme. Since this questions are beyond the scope of the present work, we include the AIS in the comparison only in the Cox--Ingersol--Ross (CIR) case (linear drift and $\alpha = \tfrac{1}{2}$). In this context, the AIS can be use only if $\sigma^2>4b(0)$, for other values of the parameters the AIS is not defined. In terms of convergence, when $\alpha=\tfrac{1}{2}$, according to Theorem 2 in Alfonsi \cite{Alfonsi:2013aa}, the AIS converges in the  $L^p(\Omega)$-norm, for $p\geq1$, at rate $\dt$ when $(1\lor 3p/4)\sigma^2<b(0)$.
When $\alpha>\tfrac{1}{2}$, the AIS (see Section 3 of \cite{Alfonsi:2013aa}) converge as soon as $b(0)>0$, at rate $\dt$ to the exact solution.

\paragraph{Balanced Milstein Scheme (BMS). }
The BMS was introduced by  Kahl and Schurz in \cite{Kahl:2006ab}, and although its convergence it is not proven for Equation \eqref{exactProcess} (see Remark 5.12 in  \cite{Kahl:2006ab}),  numerical experiments shows a competitive behavior (see \cite{Kahl:2006aa}). Also, the BMS can be easily implemented for $\alpha\in[\tfrac{1}{2},1)$, so we decide to include it in our numerical comparison.  

\paragraph{Modified Euler Scheme (MES). }
Introduced in \cite{Chassagneux:2015aa}, the MES can be applied to  the Equation \eqref{exactProcess} for $\alpha\in[\tfrac{1}{2},1)$ when the drift has the form $b(x)=\mu_1(x)-\mu_2(x)x$ for $\mu_1$ and $\mu_2$ suitables functions. The rate of convergence in the $L^1(\Omega)$-norm of the MES depends on the parameters. For $\alpha =\tfrac{1}{2}$ the rate is 1 if $\sigma^2$ is big enough compared with $b(0)$, and it is ${\rho}<1$ in other case. When $\alpha>\tfrac{1}{2}$, the MES converges at rate 1 as soon as $b(0)>0$   (see Proposition 4.1 in \cite{Chassagneux:2015aa}).  When  $\alpha>\tfrac{1}{2}$, the  implementation of the MES requires some extra tuning which is not explicitly given in \cite{Chassagneux:2015aa}  (see Remark 5.1), so we implement the MES only for $\alpha=\tfrac{1}{2}$.

\paragraph{Symmetryzed Euler Scheme (SES). } The SES, introduced in \cite{BERKAOUI:2008fk}, is an explicit scheme which can be apply to  the equation \eqref{exactProcess} for $\alpha\in[\tfrac{1}{2},1)$ and any Lipschitz drift function $b$. It has the weakest hypothesis over $b$ of all the schemes discussed in this paper. If $\alpha=\tfrac{1}{2}$, according to Theorem 2.2 in \cite{BERKAOUI:2008fk}, the rate of convergence of the SES is $\sqrt{\dt}$ under suitable conditions for $b(0)$, $\sigma^2$ and $K$. When $\alpha>\tfrac{1}{2}$ the SES converge at rate $\sqrt{\dt}$ as soon as $b(0)>0$ (see Theorem 2.2 in \cite{BERKAOUI:2008fk}).

\medskip
We summarize the theoretical analysis of the schemes  above  in Table~\ref{table:ConditionsOverParametersDifferentSchemesAlpha1/2} for $\alpha=\tfrac{1}{2}$, and in Table~\ref{table:ConditionsOverParametersDifferentSchemes} for $\alpha>\tfrac{1}{2}$. 

\begin{table}[h!]
\begin{center}
\def\arraystretch{1.4}
\begin{tabular}{|c|c|c|c|c|}
\cline{1-5}
{Scheme} & {Norm}& {Drift} &{Convergence's Condition}& 
\begin{tabular}{c}
Theoretical \\[-0.2cm]
rate 
\end{tabular} \\ \hline 
\multirow{2}{* }{SMS}  &\multirow{2}{* }{$L^{p},p\geq1$} &$b$ Lipschitz, $b\in\mathcal{C}^2$  &  \multirow{2}{* }{$b(0)>3\left( 2[p\lor2]+1\right)\tfrac{\sigma^2}{2}$} & \multirow{2}{* }{1}   \\

&&$b''$ with polynomial growth && \\\cline{1-5}
\multicolumn{1}{ |c| }{AIS \cite{Alfonsi:2013aa}} & $L^{p},p\in[1,\frac{4b(0)}{3\sigma^2})$ & $b(x)=a-bx$ & $b(0)>(1\lor \frac{3}{4}p)\sigma^2$  & 1    \\ \cline{1-5}
\multicolumn{1}{ |c| }{BMS} &   &  &     & undetermined   \\ \cline{1-5}
\multirow{3}{* }{MES \cite{Chassagneux:2015aa}} & \multirow{3}{* }{$L^{1}$}&{$b(x)=\mu_1(x)-\mu_2(x)x$} & $b(0)>\frac{5\sigma^2}{2}$   & $1$    \\ \cline{4-5}
 &  &$\mu_i\in\mathcal{C}^2_b\cap\mathcal{C}^0_b,\;\mu_1\geq0$ & $b(0)>\frac{3\sigma^2}{2}$   & $\tfrac{1}{2}$  \\ \cline{4-5}
 &  &$\mu_1'\leq0,\;\mu_2'\geq0$& $b(0)>\sigma^2$    & $\left( \frac{1}{6}, \frac{1}{2}-\frac{\sigma^2}{2b(0)+\sigma^2}\right)$  \\ \hline 
{SES \cite{BERKAOUI:2008fk}}&{$L^{p},p\geq1$} &{$b$ Lipschitz}&  
\begin{tabular}{c}
$b(0) >\left[  \sqrt{\tfrac{8}{\sigma^2} \mathcal{K}(\tfrac{p}{2} \vee 1)} +1\right]\tfrac{\sigma^2}{2},$\\
$\mathcal{K}(q)=K(16q-1)$\\
$\;\;\;\;\;\;\;\;\;\;\;\;\;\;\;\;\lor 4\sigma^2(8p-1)^2$
\end{tabular}
& $\tfrac{1}{2}$  
\\ \cline{1-5}
\end{tabular}
\end{center}
\caption{Summary of the condition over the parameters for the convergence of the different schemes for $\alpha=\tfrac{1}{2}$.\label{table:ConditionsOverParametersDifferentSchemesAlpha1/2}}
\end{table}

\begin{table}[h!]
\begin{center}
\def\arraystretch{1.4}
\begin{tabular}{|c|c|c|c|c|}
\hline
{Scheme} & {Norm}& {Drift} &{Convergence's Condition}& 
\begin{tabular}{c}
Theoretical \\[-0.2cm]
rate 
\end{tabular} \\ \hline 
{SMS}  &{$L^{p},p\geq1$}  &
\begin{tabular}{c}
$b$ Lipschitz, $b\in\mathcal{C}^2$\\
$b''$ with polynomial growth
\end{tabular}
& 
{$b(0)>2\alpha(1-\alpha)^2\sigma^2$} &{1}
 \\ \cline{1-5}

\multicolumn{1}{ |c| }{AIS} & $L^{p},p\in[1,\frac{4b(0)}{3\sigma^2})$ & $b(x)=a-bx$ & $b(0)>0$  & 1    \\ \cline{1-5}
\multicolumn{1}{ |c| }{BMS} &    & &   & undetermined   \\ \cline{1-5}

\multirow{3}{* }{MES} & \multirow{3}{* }{$L^{1}$}&{$b(x)=\mu_1(x)-\mu_2(x)x$} & \multirow{3}{*}{$b(0)>0$}   & \multirow{3}{*}{$1$}    \\ 
 &  &$\mu_i\in\mathcal{C}^2_b\cap\mathcal{C}^0_b,\;\mu_1\geq0$ &    &   \\ 
 &  &$\mu_1'\leq0,\;\mu_2'\geq0$&   &   \\ \hline
{SES}&{$L^{p},p\geq1$} &{$b$ Lipschitz}&  $b(0) >0$&$\tfrac{1}{2}$ \\ \hline

\end{tabular}
\end{center}
\caption{Summary of the condition over the parameters for the convergence of the different schemes when $\alpha>\tfrac{1}{2}$. \label{table:ConditionsOverParametersDifferentSchemes}}
\end{table}

\subsubsection*{Simulation setup}

In our simulations we consider a time horizon $T=1$, and $x_0=1$. 
In order to include as many schemes as possible we consider for all simulations a linear drift 
$$b(x)=10-10x.$$ 
To measure the error of each scheme,  we estimate its $L^1(\Omega)$-norm for which a  theoretical rate is proposed for all the selected schemes. 

Let $\E| \e_T^{\text{\tiny{SMS}}}|$, $\E| \e_T^{\text{\tiny{BMS}}}|$, $\E| \e_T^{\text{\tiny{SES}}}|$, $\E| \e_T^{\text{\tiny{MES}}}|$, and $\E| \e_T^{\text{\tiny{AIS}}}|$ be the $L^1(\Omega)$-norm of the error for the SMS, BMS, SES, MES and AIS respectively.  To estimate these quantities, we consider as a reference solution the AIS approximation for $\dt=\dmax(\alpha)/2^{12}$ when $\alpha=\tfrac{1}{2}$, and the SMS for $\dt~ = ~\dmax(\alpha)/2^{12}$ when $\alpha>\tfrac{1}{2}$.
 Then for each 
 $$\dt \in \left\{\frac{\dmax(\alpha)}{2^{n}}, n=1,\ldots 9\right\},$$
we estimate  $\E| \e_T^{{\tiny{\cdot\cdot\cdot}}}|$ by computing $5\times 10^4$ trajectories of the corresponding scheme, and comparing them  with the reference solution. The results of these simulations are reported  in Figures \ref{figure:simulations} ($\alpha=\tfrac{1}{2}$) and \ref{figure:simulations2} ($\alpha>\tfrac{1}{2}$).  
The graphs  plot  the $\text{Log} \E| \e_T^{{\tiny{\cdot\cdot\cdot}}}|$ in terms of the $\text{Log}  \dt$,  and we have added the plot of  the identity map  to serve as reference for rate of order 1. The  schemes with a slope smaller than the slope of the reference line have an order of convergence smaller than one. To obtain a more quantitative comparison of the schemes, we also perform a regression analysis on the model
$$\log(\E| \e_T^{{\tiny{\cdot\cdot\cdot}}}|) = \rho \log(\dt)+ C.$$
Notice that $\hat{\rho},$ the estimated value for $\rho$, corresponds to the empirical rate of convergence of the different schemes. We present the result of this regression analysis in Tables \ref{table:NumericalResultsAlphaOneHalf} and \ref{table:NumericalResultsAlphaBiggerThanOneHalf}.

\paragraph{Empirical results for  $\alpha=\tfrac{1}{2}$. } Figure \ref{figure:simulations} and Table \ref{table:NumericalResultsAlphaOneHalf} present the result for the CIR case. From Table \ref{table:ConditionsOverParametersDifferentSchemesAlpha1/2},  we  observe that we can distinguish five cases for the parameters. 

The first case $(\sigma^2 =1)$ is such that  $b(0)>6\sigma^2$: the SMS, the AIS, and the MES have a theoretical rate of convergence equal to $\dt$, whereas the SES has a theoretical rate of convergence equal to $\sqrt{\dt}$. In Figure \ref{figure:simulations-a},  we observe that the graphs of the SMS, the AIS, the BMS, and the MES seem parallel to the reference line, which is expected, while the SES has a smaller slope. This is also confirms in the first line of the Table \ref{table:NumericalResultsAlphaOneHalf}, where we observe that the empirical rates of convergence are close to the theoretical ones. Notice that the BMS has a competitive empirical rate of convergence, although the theoretical one is not known.
\begin{table}[!h!]
\begin{center}
\bgroup
\def\arraystretch{2}
\begin{tabular}{|c|c|c|c|c|c|c|c|c|c|c|}
\cline{1-11}
\multirow{3}{* }{$\sigma^2$}  & \multicolumn{10}{ |c| }{Observed $L^1(\Omega)$ convergence rate $\hat{\rho}$   (and its $R^2$ value)} \\[-0.2cm] \cline{2-11} 
& \multicolumn{2}{|c|}{SMS}& \multicolumn{2}{|c|}{AIS} & \multicolumn{2}{|c|}{BMS} & \multicolumn{2}{|c|}{MES}  & \multicolumn{2}{|c|}{SES} \\[-0.2cm]  \cline{2-11}
&$\hat{\rho}$ 
&{\small \cellcolor{gray!20} ($R^2$)}&$\hat{\rho}$ 
&{\small \cellcolor{gray!20} ($R^2$)}&$\hat{\rho}$ 
&{\small \cellcolor{gray!20} ($R^2$)}&$\hat{\rho}$ 
&{\small \cellcolor{gray!20} ($R^2$)}&$\hat{\rho}$ 
&{\small \cellcolor{gray!20} ($R^2$)}\\[-0.1cm] \cline{1-11}
1&\cellcolor{red!20}0.9956&\small\cellcolor{gray!20}(99.9\%)
&\cellcolor{red!20}1.0060&\small\cellcolor{gray!20}(99.9\%)
&\cellcolor{red!20}1.0055&\small\cellcolor{gray!20}(99.9\%) 
&\cellcolor{red!20}0.9955&\small\cellcolor{gray!20}(99.9\%)
&\cellcolor{blue!25}0.5941&\small\cellcolor{gray!20}(99.3\%)\\ \cline{1-11}
4&\cellcolor{red!20}0.9969&\small\cellcolor{gray!20}(99.9\%)
&\cellcolor{red!20}1.0054&\small\cellcolor{gray!20}(99.9\%)
&\cellcolor{red!20}1.0037&\small\cellcolor{gray!20}(99.9\%)
&\cellcolor{red!20}0.9961&\small\cellcolor{gray!20}(99.9\%)
&\cellcolor{blue!25}0.5344&\small\cellcolor{gray!20}(99.8\%)\\ \cline{1-11}
6.25&\cellcolor{red!20}0.9976&\small\cellcolor{gray!20}(99.9\%)
&\cellcolor{red!20}1.0043&\small\cellcolor{gray!20}(99.9\%)
&\cellcolor{red!20}1.0002&\small\cellcolor{gray!20}(99.9\%)
&\cellcolor{red!20}0.9941&\small\cellcolor{gray!20}(99.9\%)
&\cellcolor{blue!25}0.5237&\small\cellcolor{gray!20}(99.9\%)\\ \cline{1-11}
9&\cellcolor{red!20}0.9984&\small\cellcolor{gray!20}(99.9\%)
&\cellcolor{red!20}1.0015&\small\cellcolor{gray!20}(99.9\%)
&\cellcolor{red!20}0.9859&\small\cellcolor{gray!20}(99.9\%)
&\cellcolor{red!20}0.7891&\small\cellcolor{gray!20}(99.9\%)
&\cellcolor{blue!25}0.5164&\small\cellcolor{gray!20}(99.9\%)\\ \cline{1-11}
36&\cellcolor{blue!25}0.6410&\small\cellcolor{gray!20}(99.7\%)
&\cellcolor{blue!25}0.6282&\small\cellcolor{gray!20}(99.8\%)
&\cellcolor{blue!25}0.4538&\small\cellcolor{gray!20}(99.3\%)
&\cellcolor{blue!25}0.3575&\small\cellcolor{gray!20}(99.4\%)
&\cellcolor{blue!25}0.4718&\small\cellcolor{gray!20}(99.9\%)\\ \cline{1-11}
\end{tabular}
\egroup
\end{center}
\caption{Empirical rate of convergence $\hat{\rho}$ for the $L^1(\Omega)$-error of the schemes when $\alpha=\tfrac{1}{2}$ for different values of $\sigma^2$. \label{table:NumericalResultsAlphaOneHalf}}
\end{table}

The second case $(\sigma^2=4)$ is such that $b(0)\in (5\frac{\sigma^2}{2},6\sigma^2)$, now only the AIS and the MES have a theoretical rate of convergence equal to $\dt$. However, how we can see in Figure \ref{figure:simulations-b} the SMS still shows a linear behavior in this case. Recall that the condition over the parameters is a sufficient condition and we believe that could be improved. Notice that also the BMS shows a linear behavior. In the second line of Table \ref{table:NumericalResultsAlphaOneHalf}, we can observe that empirical rates of convergence are close to one for  all the scheme but the SES.

In Figure \ref{figure:simulations-c}, we illustrate the third case 
$(\sigma^2=6.25)$ and then $b(0)\in (3\sigma^2/2,5\sigma^2/2)$. In this case, only the AIS has a theoretical rate of convergence equal to one. For the MES is $\sqrt{\dt}$, but in the graphics we still  observe a linear behavior for the MES, and also for the SMS and the BMS. This is confirm in the third line of Table \ref{table:NumericalResultsAlphaOneHalf}.

The fourth case is $(\sigma^2=9)$ and $b(0)\in (\sigma^2,3\sigma^2/2)$, which we display in Figure \ref{figure:simulations-d}. For this values of the parameters the theoretical rate of convergence is known only for the AIS and the MES. Nevertheless, we observe in the graphs and in the fourth line of Table \ref{table:NumericalResultsAlphaOneHalf}  that all the schemes seems to reach their optimal convergence rates. 

Finally, the fifth case is $(\sigma^2=36)$ and then $b(0)<\sigma^2$. In this case all the schemes have a sublinear behavior as we can see in
Figure \ref{figure:simulations-d} and the fifth line of  Table \ref{table:NumericalResultsAlphaOneHalf}.  This case illustrate the necessity of some condition over the parameters of the model to obtain the optimal rate of convergence for the SMS.

\paragraph{Empirical results for  $\alpha>\tfrac{1}{2}$. } 
In Figure \ref{figure:simulations2} and Table \ref{table:NumericalResultsAlphaBiggerThanOneHalf} we present the results of the simulations for $\alpha=0.6$, and $\alpha = 0.7$. 

In these cases, it can be observed in numerical experiments  that the MES  needs smaller $\dt$ to achieve its theoretical order one convergence rate, unless one tunes the projection operator in the manner of Remark 5.1 in \cite{Chassagneux:2015aa}.  Since this tuning is not explicitly given we do not include the MES in these simulations. 

\begin{table}[h!]
\begin{center}
\bgroup
\def\arraystretch{2}
\begin{tabular}{|c|c|c|c|c|c|c|c|}
\cline{1-8}
\multicolumn{2}{|c|}{Parameters} & \multicolumn{6}{ |c| }{Observed $L^1(\Omega)$ convergence rate $\hat{\rho}$ {\small (and its $R^2$ value)}} \\[-0.2cm]  \cline{1-8} 
\multirow{2}{*}{$\alpha$}&\multirow{2}{*}{$\sigma^2$}
&\multicolumn{2}{|c|}{SMS}
&\multicolumn{2}{|c|}{BMS}
&\multicolumn{2}{|c|}{SES}\\[-0.2cm]  \cline{3-8}
&
&$\hat{\rho}$&\small\cellcolor{gray!20}($R^2$)
&$\hat{\rho}$&\small\cellcolor{gray!20}($R^2$)
&$\hat{\rho}$&\small\cellcolor{gray!20}($R^2$)\\[-0.2cm]  \cline{1-8}
\multirow{3}{*}{$0.6$}&49
&\cellcolor{red!20}0.9819&\small\cellcolor{gray!20}(99.9\%)
&\cellcolor{blue!20}0.7296&\small\cellcolor{gray!20}(99.1\%)
&\cellcolor{blue!20}0.5273&\small\cellcolor{gray!20}(99.8\%)\\ \cline{2-8}
&53.29
&\cellcolor{red!20}0.9766&\cellcolor{gray!20}(99.9\%)
&\cellcolor{blue!20}0.7788&\small\cellcolor{gray!20}(99.3\%)
&\cellcolor{blue!20}0.5133&\small\cellcolor{gray!20}(99.9\%)\\ \cline{2-8}
&144
&\cellcolor{blue!20}0.6609 &\small\cellcolor{gray!20}(98.9\%)
&\cellcolor{blue!20}0.4336&\small\cellcolor{gray!20}(97.3\%)
&\cellcolor{blue!20}0.5074&\small\cellcolor{gray!20}(99.9\%)\\ \cline{1-8}
\multirow{3}{*}{$0.7$}&64
&\cellcolor{red!20}1.004&\small\cellcolor{gray!20}(99.9\%)
&\cellcolor{red!20}0.9022&\small\cellcolor{gray!20}(99.7\%)
&\cellcolor{blue!20}0.5242&\small\cellcolor{gray!20}(99.8\%)\\ \cline{2-8}
&81
&\cellcolor{red!20}0.9991&\small\cellcolor{gray!20}(99.9\%)
&\cellcolor{blue!20}0.8813&\small\cellcolor{gray!20}(99.7\%)
&\cellcolor{blue!20}0.5327&\small\cellcolor{gray!20}(99.7\%)\\ \cline{2-8}
&225&\cellcolor{red!20}0.9146&\cellcolor{gray!20}(99.7\%)
&\cellcolor{blue!20}0.6497&\small\cellcolor{gray!20}(97.6\%)
&\cellcolor{blue!20}0.6410&\small\cellcolor{gray!20}(99.2\%)\\ \cline{1-8}
\end{tabular}
\egroup
\end{center}
\caption{Empirical rate of convergence $\hat{\rho}$ for the $L^1(\Omega)$-error,  when $\alpha>\tfrac{1}{2}$ for different values of $\alpha$ and $\sigma^2$. \label{table:NumericalResultsAlphaBiggerThanOneHalf}}
\end{table}
We have observed in the numerical experiments three cases for the parameters. The first one  is when $b(0)>2\alpha(1-\alpha)^2\sigma^2$ ($\sigma^2=49$ and $\sigma^2=64$).  In this case, Theorem \ref{mainTheorem} holds and we observe the order one convergence (see Figures \ref{figure:simulations2-a},  \ref{figure:simulations2-b}, and first and fourth row in Table  \ref{table:NumericalResultsAlphaBiggerThanOneHalf}). The second case is when the parameters do not satisfy  $b(0)>2\alpha(1-\alpha)^2\sigma^2$ ($\sigma^2=53.29$ and $\sigma^2=81$), and then we can not apply Theorem \ref{mainTheorem}, but in the numerical simulations we still observe the order one convergence (see Figures \ref{figure:simulations2-c},  \ref{figure:simulations2-d}, and second and fifth row in Table  \ref{table:NumericalResultsAlphaBiggerThanOneHalf}). Finally the third case, is when $\sigma\gg b(0)$, and then we do not observe a linear convergence anymore (see Figures \ref{figure:simulations2-e},  \ref{figure:simulations2-f}, and third and six row in Table  \ref{table:NumericalResultsAlphaBiggerThanOneHalf}).  Notice that in the three cases the SMS performs better than the BMS, specially when $\sigma^2$ grows. (see Table \ref{table:NumericalResultsAlphaBiggerThanOneHalf}). 

The second and third case show us that some restriction has to be impose on the parameters to observe the convergence of  order one. But our restriction, although sufficient, it seems to be too strong, specially for $\alpha$ close to one.

\begin{figure}[t!]
    \centering
    \begin{subfigure}[t]{0.47\textwidth}
        \centering
        \includegraphics[width=\textwidth,height=6cm]{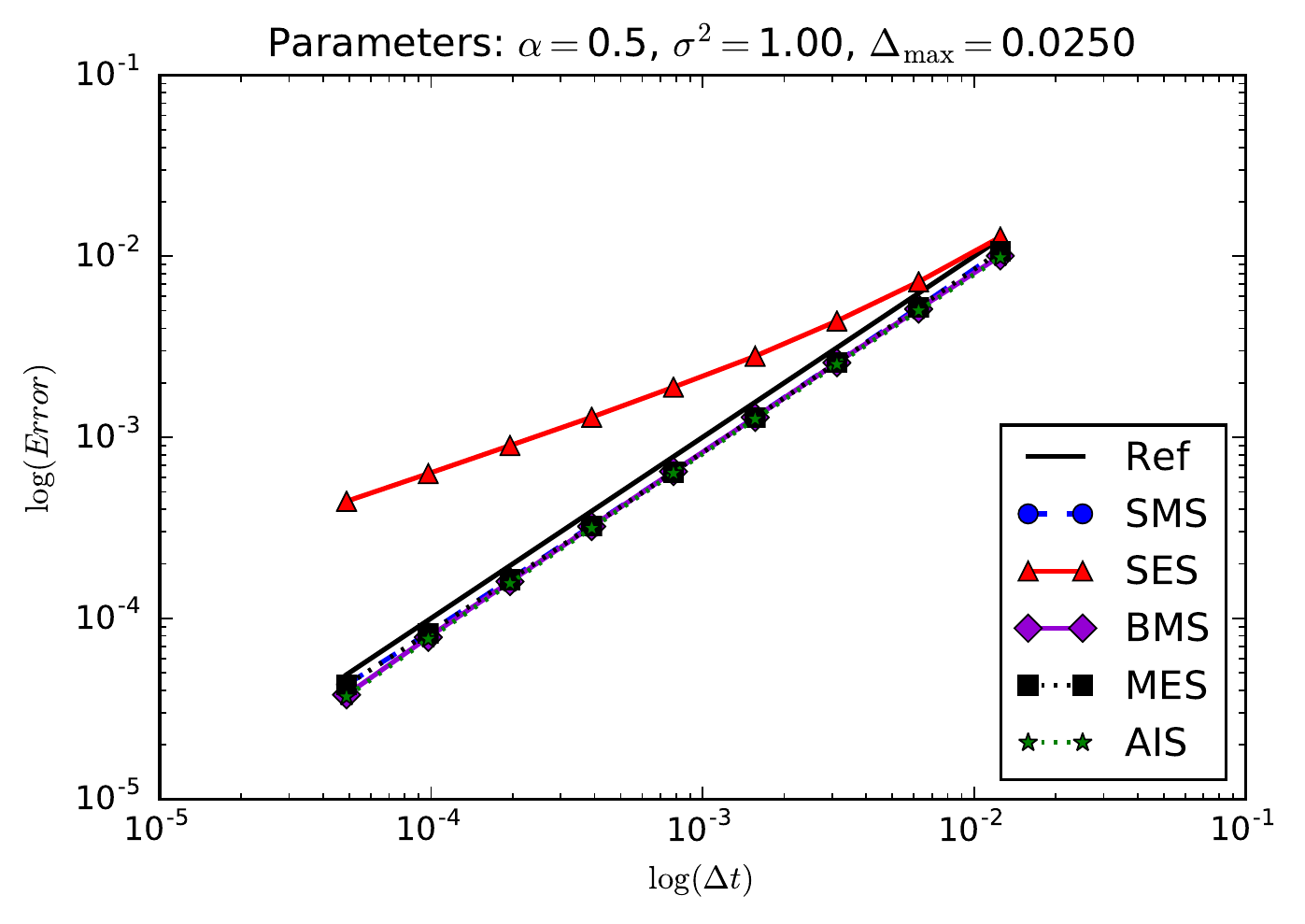}
        \caption{Parameters in case 1: $b(0)>6\sigma^2$. \label{figure:simulations-a}}
    \end{subfigure}
    \quad
    \begin{subfigure}[t]{0.47\textwidth}
        \centering
        \includegraphics[width=\textwidth,height=6cm]{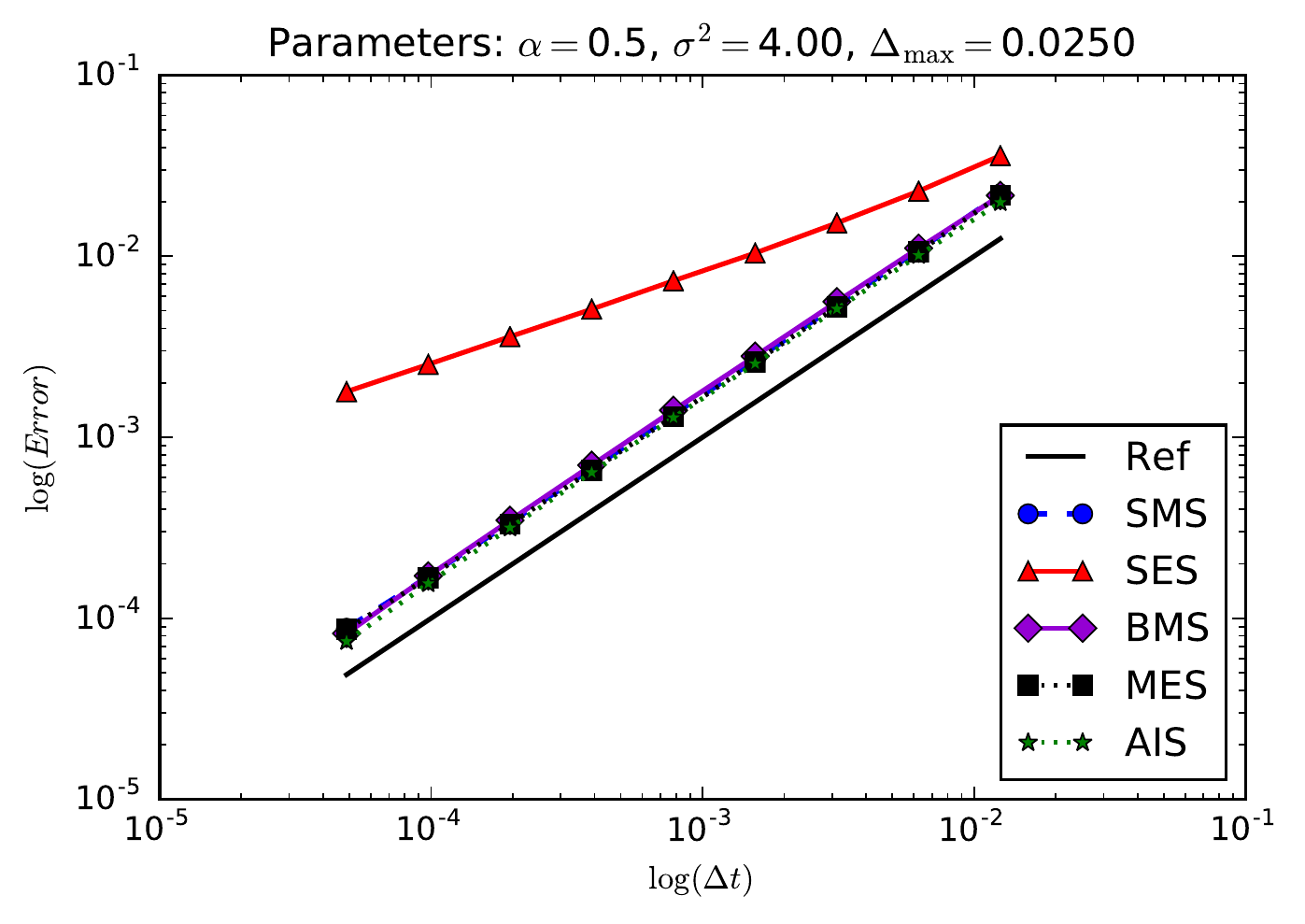}
        \caption{Parameters in case 2: $b(0)\in (5\sigma^2/2, 6\sigma^2)$.\label{figure:simulations-b}}
    \end{subfigure}
    \\
    \begin{subfigure}[t]{0.47\textwidth}
        \centering
        \includegraphics[width=\textwidth,height=6cm]{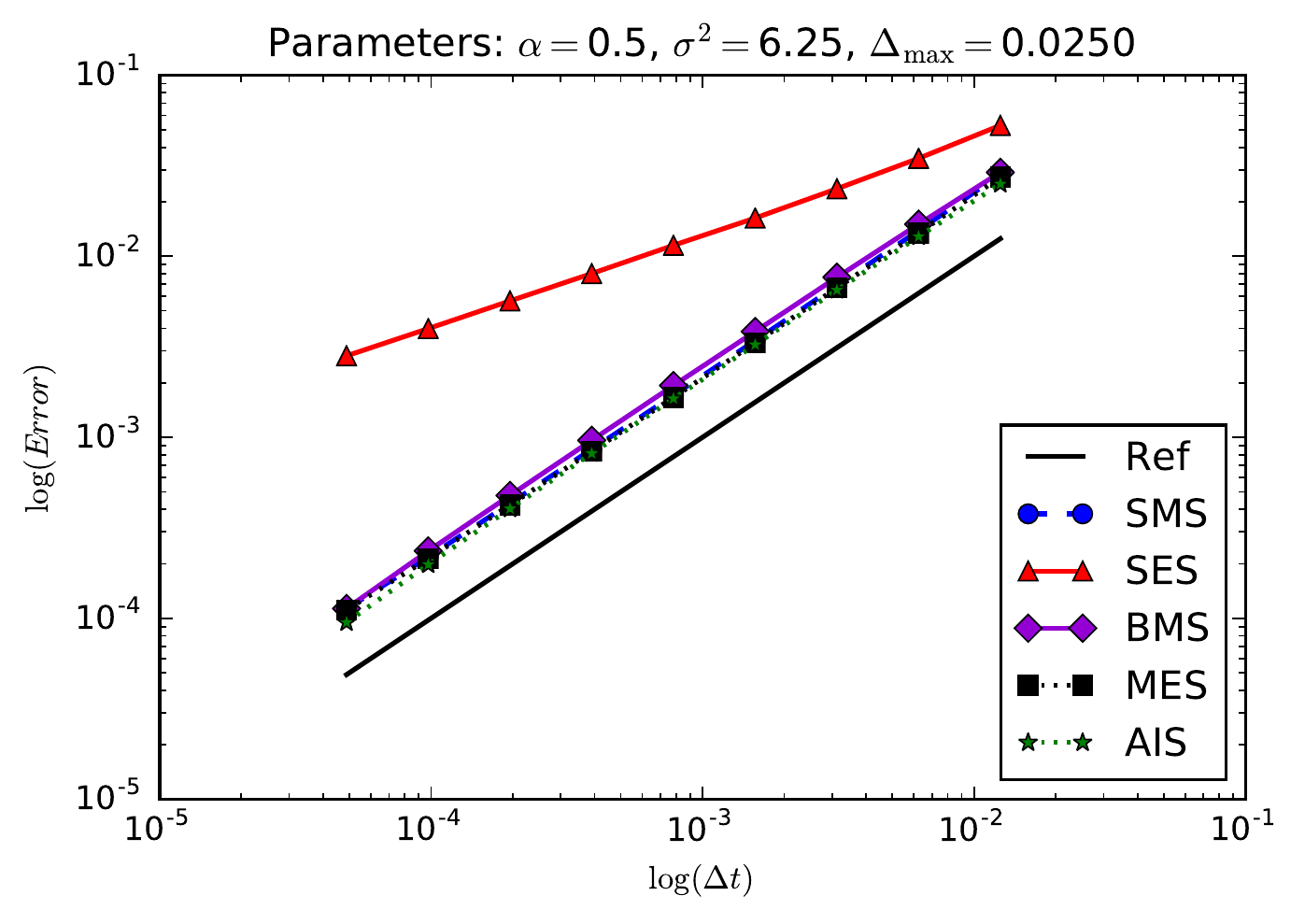}
        \caption{Parameters in case 3: $b(0)\in (3\sigma^2/2, 5\sigma^2/2)$.\label{figure:simulations-c}}
    \end{subfigure}
    \quad
    \begin{subfigure}[t]{0.47\textwidth}
        \centering
        \includegraphics[width=\textwidth,height=6cm]{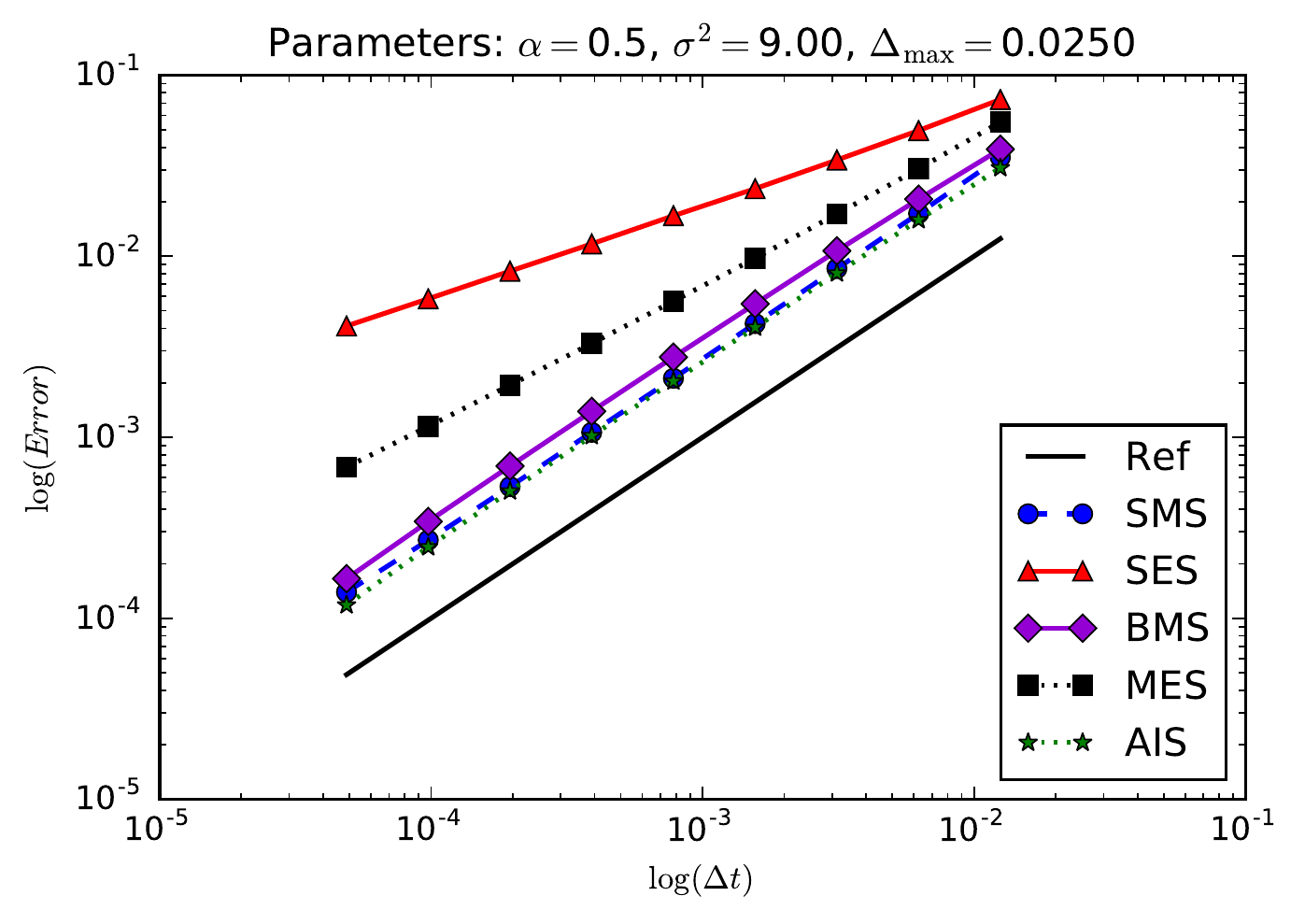}
        \caption{Parameters in case 4: $b(0)\in (\sigma^2, 3\sigma^2/2)$.\label{figure:simulations-d}}
    \end{subfigure}
    \\
    \begin{subfigure}[t]{0.47\textwidth}
        \centering
        \includegraphics[width=\textwidth,height=6cm]{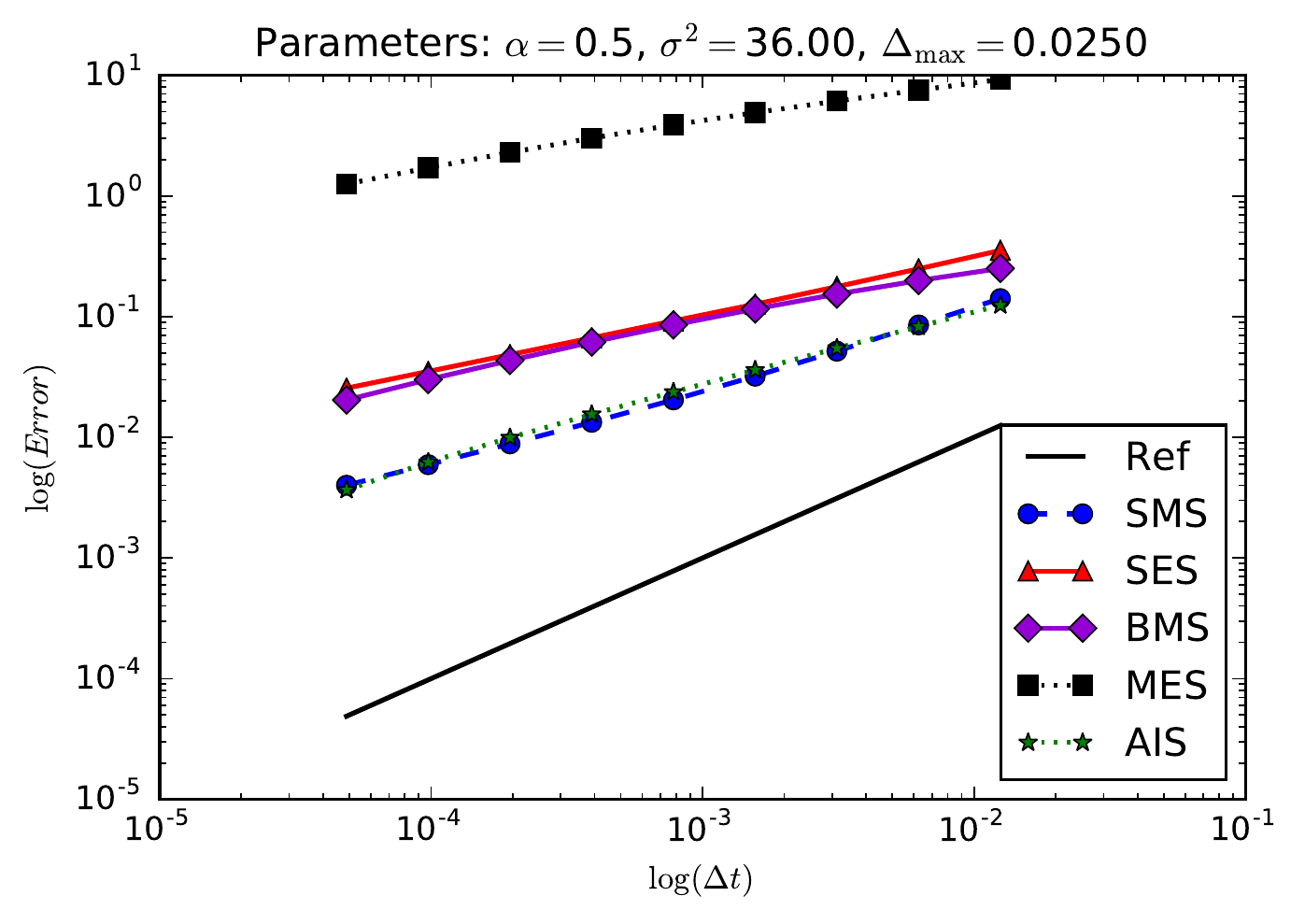}
        \caption{Parameters in case 5: $b(0)<\sigma^2$.\label{figure:simulations-e}}
    \end{subfigure}
    \caption{Step size $\dt$  versus the estimated $L^1(\Omega)$-strong error for the CIR Process  (Log-Log scale). The  identity map  serves as a reference line of rate one.    \label{figure:simulations}}
\end{figure}

\begin{figure}[t!]
    \centering
    \begin{subfigure}[t]{0.47\textwidth}
        \centering
        \includegraphics[width=\textwidth,height=6cm]{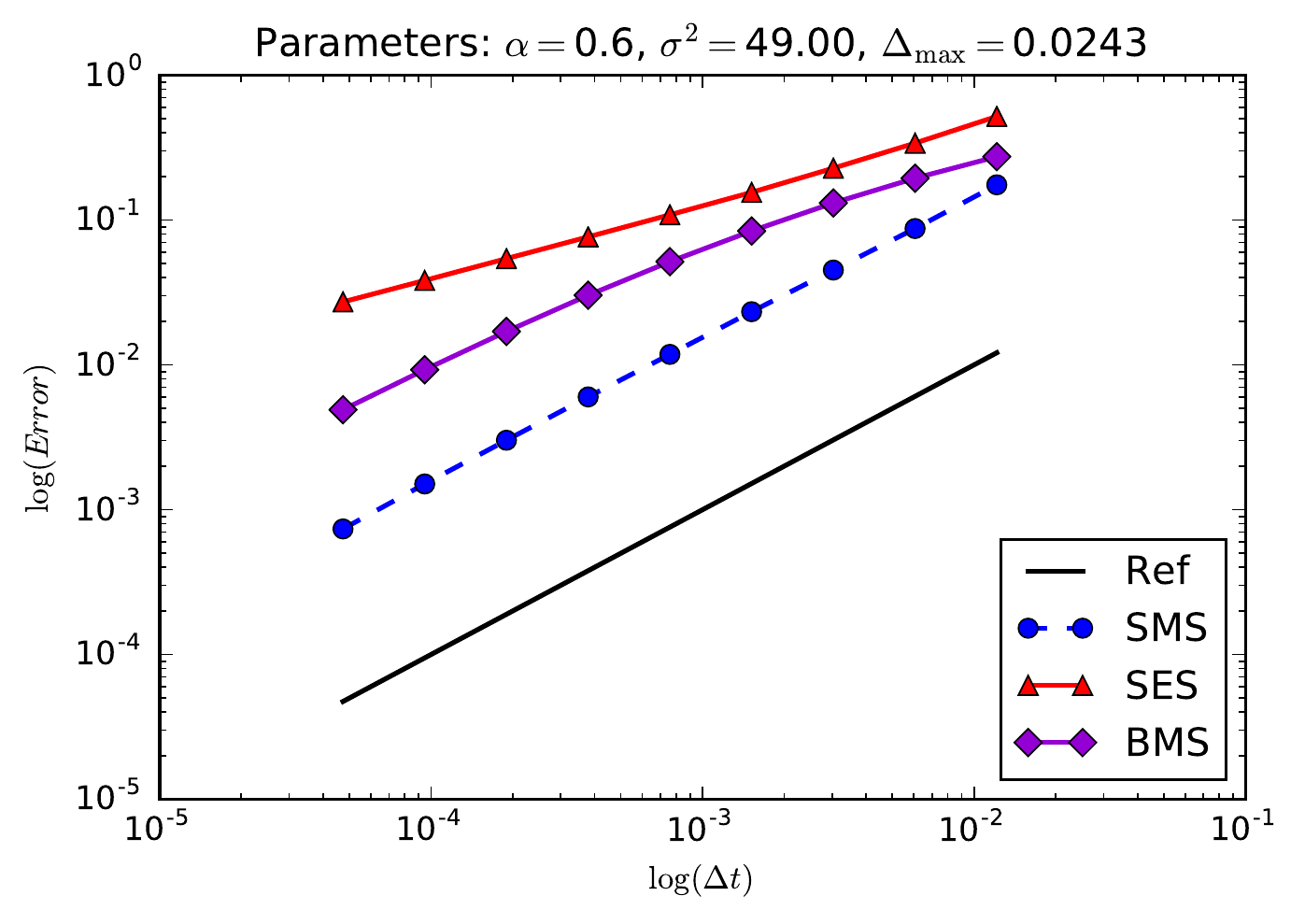}
        \caption{Parameters in case 1: $\alpha=0.60$. \label{figure:simulations2-a}}
    \end{subfigure}
    \quad
    \begin{subfigure}[t]{0.47\textwidth}
        \centering
        \includegraphics[width=\textwidth,height=6cm]{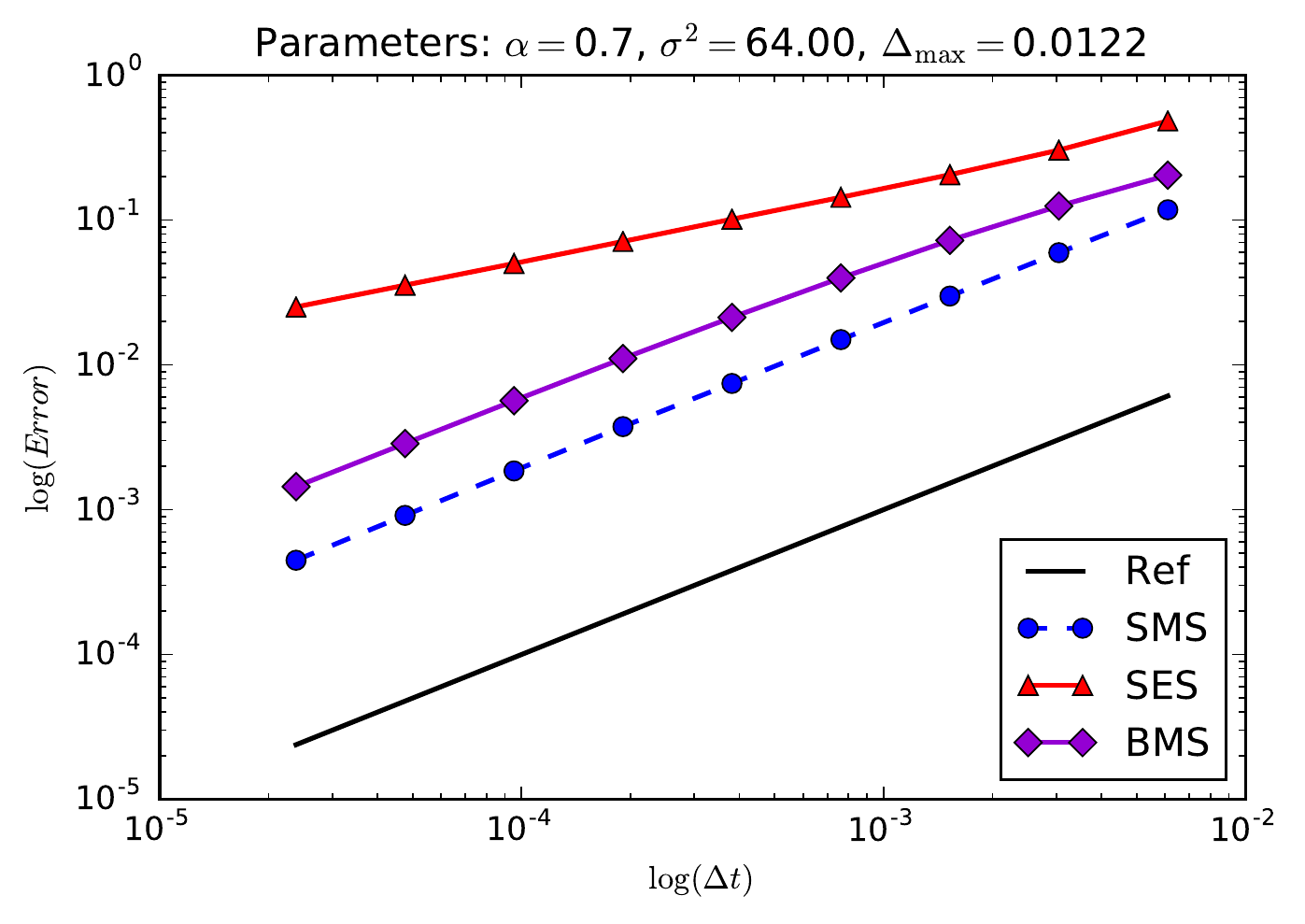}
        \caption{Parameters in case 1: $\alpha=0.70$.\label{figure:simulations2-b}}
    \end{subfigure}
    \\
    \begin{subfigure}[t]{0.47\textwidth}
        \centering
        \includegraphics[width=\textwidth,height=6cm]{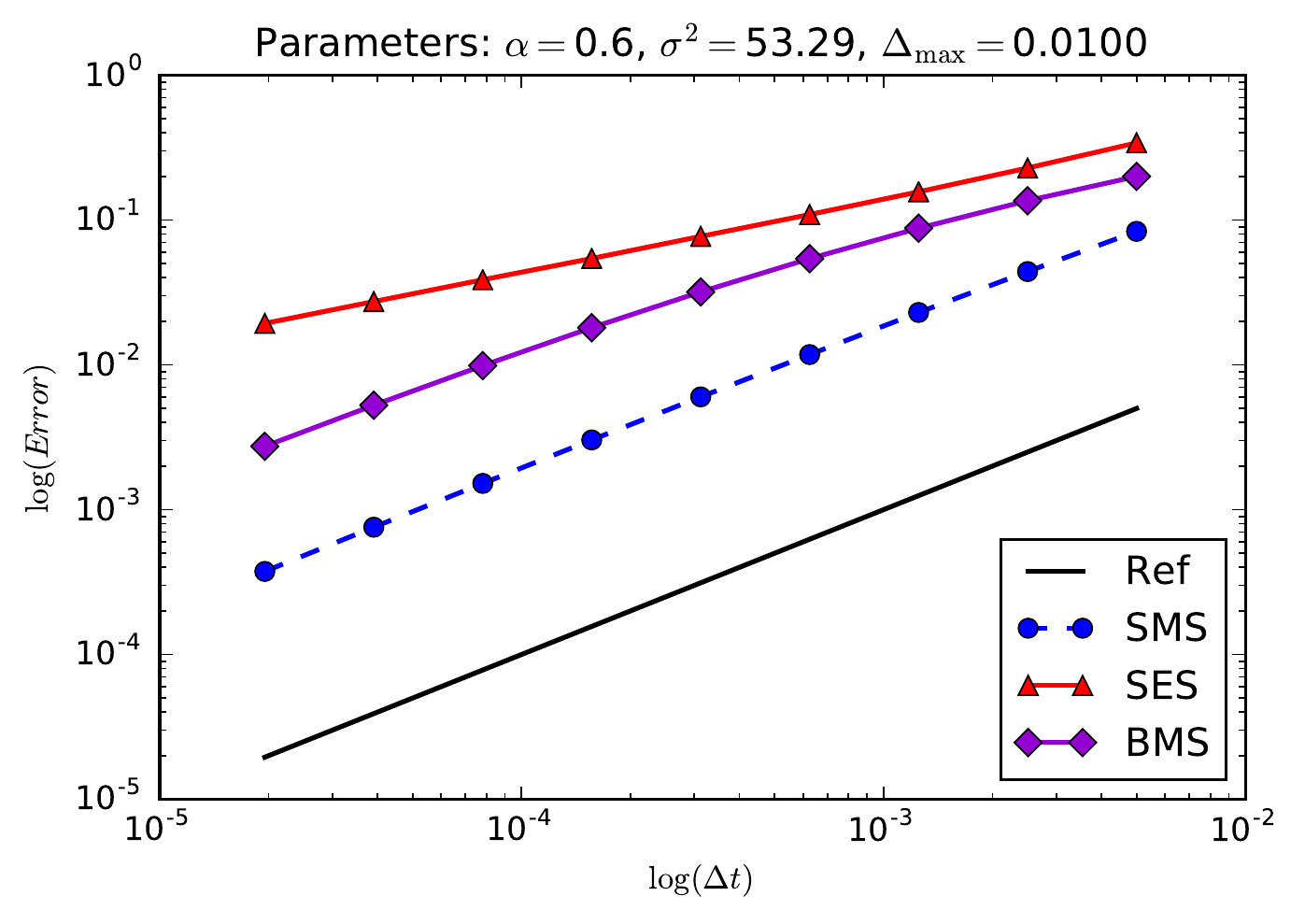}
        \caption{Parameters in case 2: $\alpha=0.60$.\label{figure:simulations2-c}}
    \end{subfigure}
    \quad
    \begin{subfigure}[t]{0.47\textwidth}
        \centering
        \includegraphics[width=\textwidth,height=6cm]{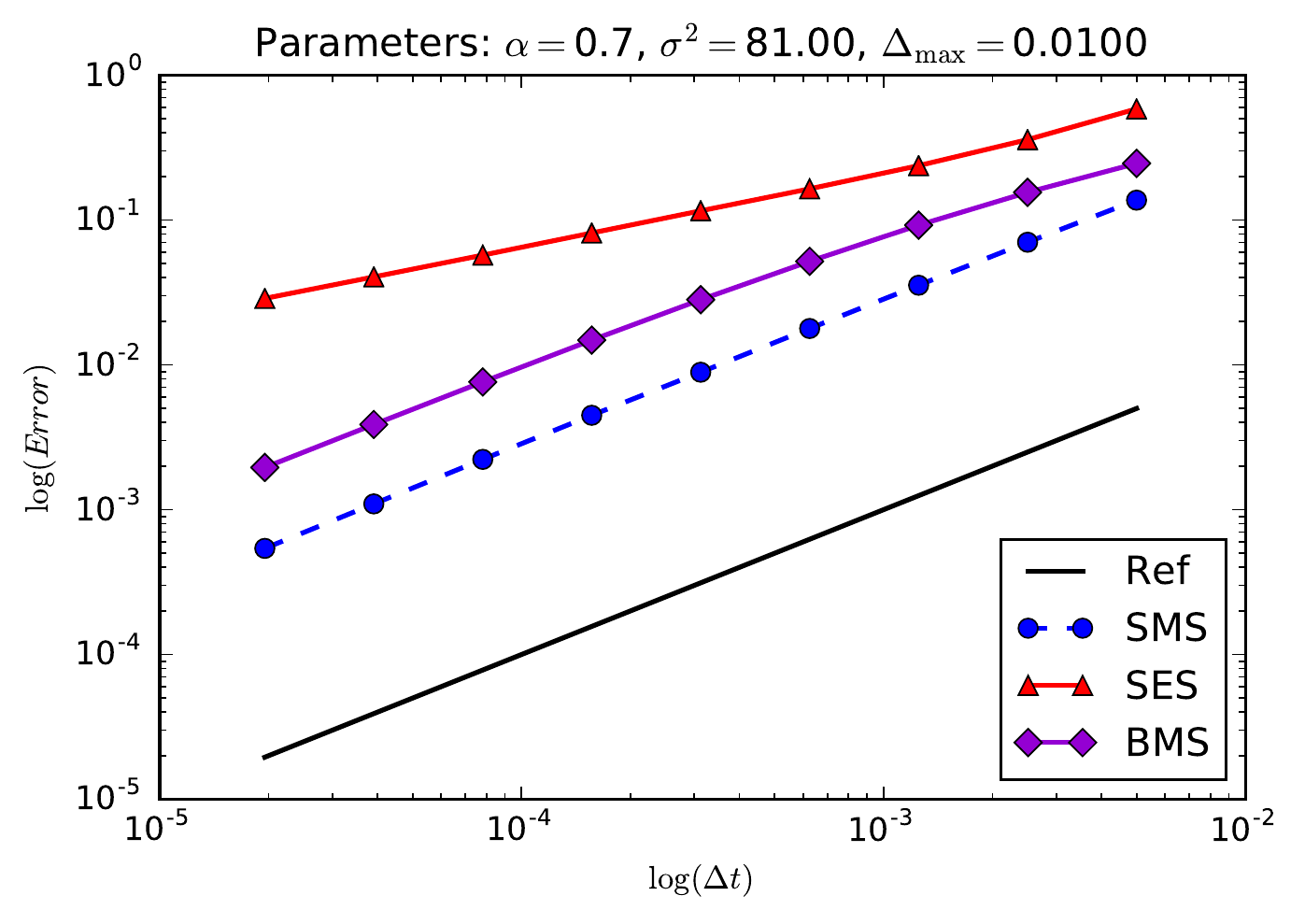}
        \caption{Parameters in case 2: $\alpha=0.70$.\label{figure:simulations2-d}}
    \end{subfigure}
    \\
    \begin{subfigure}[t]{0.47\textwidth}
        \centering
        \includegraphics[width=\textwidth,height=6cm]{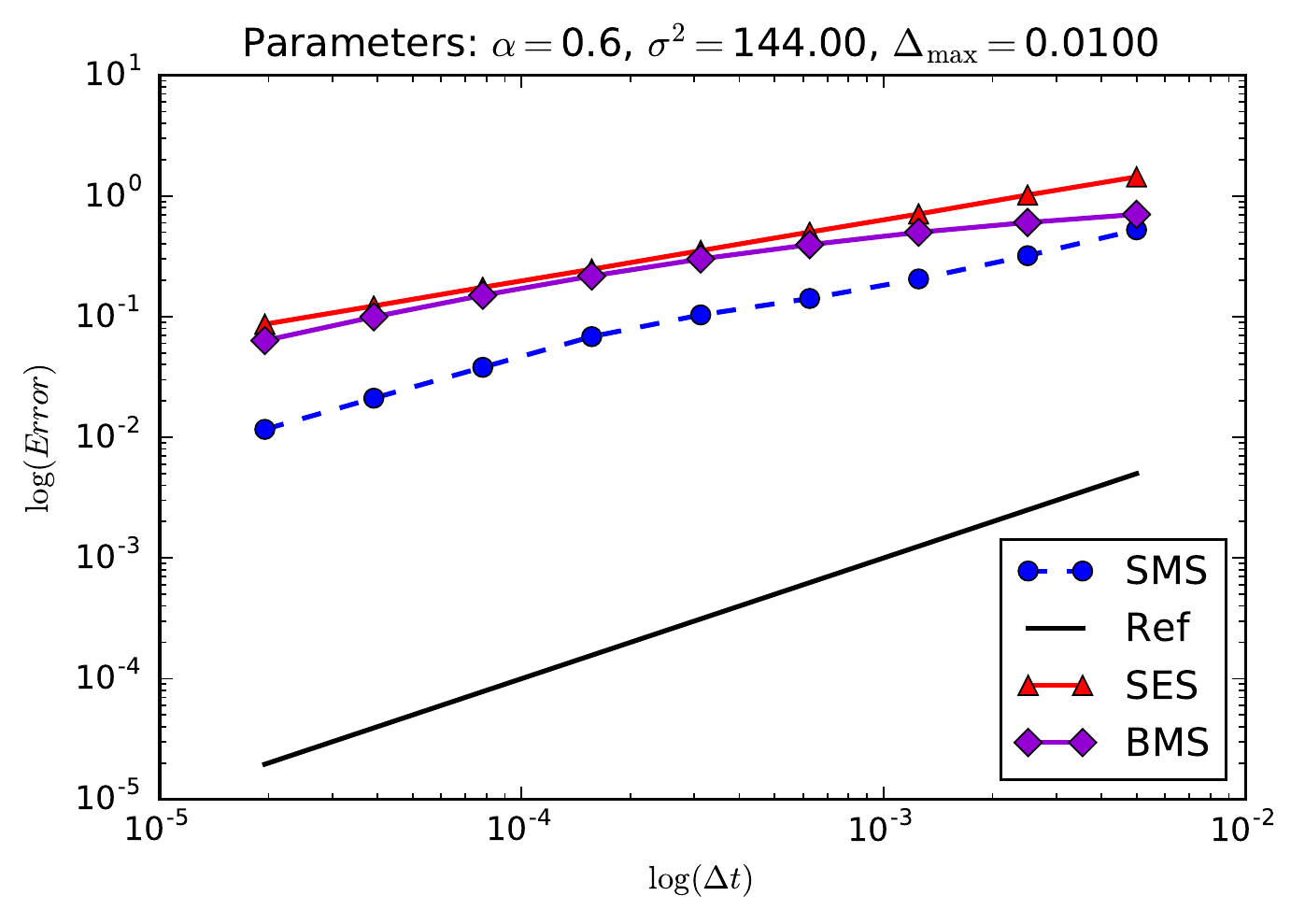}
        \caption{Parameters in case 3: $\alpha=0.60$\label{figure:simulations2-e}}
    \end{subfigure}
    \quad
    \begin{subfigure}[t]{0.47\textwidth}
        \centering
        \includegraphics[width=\textwidth,height=6cm]{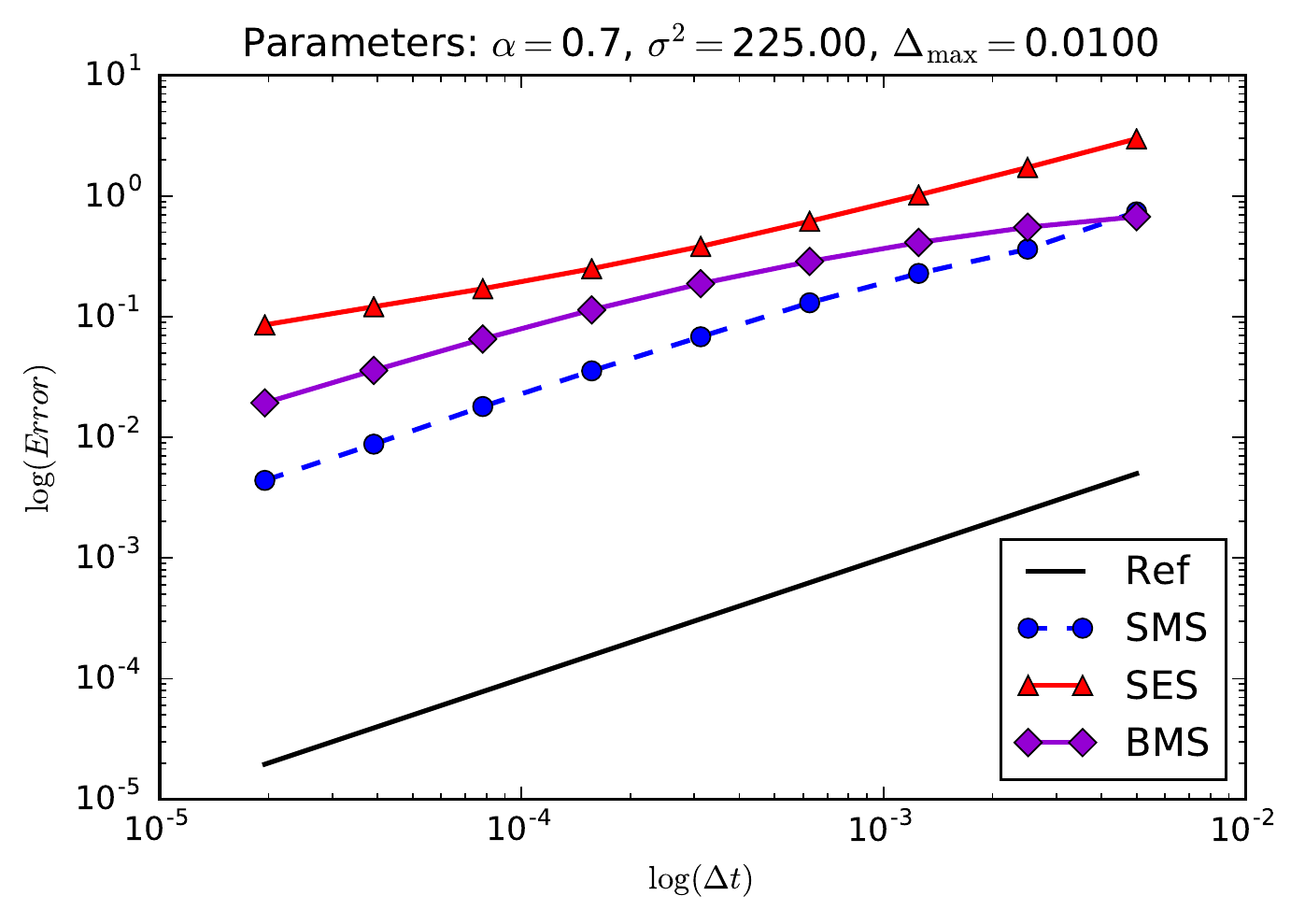}
        \caption{Parameters in case 3: $\alpha=0.70$  \label{figure:simulations2-f}}
    \end{subfigure}
    \caption{Step size $\dt$  versus the estimated $L^1(\Omega)$-error for $\alpha>\tfrac{1}{2}$ and different values for $\sigma^2$ (Log-Log scale). The  identity map  serves as a reference line of rate one.    . \label{figure:simulations2}}
\end{figure}

\subsection{Application of the SMS in Multilevel Monte Carlo}

We continue this section  by testing the SMS in the context of a multilevel Monte Carlo application widely used nowadays in computational finance (see e.g. \cite{Higham:2015aa} and  references therein).   
Multilevel Monte Carlo is an efficient technique introduced by Giles \cite{Giles:2008aa} to decrease the computational complexity of an estimator combining Monte Carlo simulation and time discretisation scheme for a given threshold in the accuracy. For details we refer to  \cite{Giles:2008aa,Giles:2008ab,Higham:2015aa}.  

For this experiment, we consider the classical but non trivial  test-case of  the Zero Coupon Bound (ZCB) pricing of maturity $T$, 
$$ B(0,T) = \E\left[ \exp\Big(-\int_0^T r_s ds\Big) \right], $$
under the hypothesis that the short interest rate dynamics $(r_t,t\geq 0)$ is  modeled with a CIR process  ($\alpha=\tfrac{1}{2}$ and $b(x)=a-bx$) : 
$$dr_t  = (a-br_t)dt + \sigma \sqrt{r_t}dW_t.$$
In this context, the price of the ZCB admits a wellknown closed-form solution given by (see e.g \cite{Cox:1985fk, lamberton1996introduction}) 
$$B(0,T)=A(T)e^{-B(T)r_0},\quad A(T)=\left[ \frac{2\lambda e^{(b+\lambda)T/2}}{(\lambda+b)(e^{\lambda T }-1)+2\lambda}\right]^{\frac{2a}{\sigma^2}},\quad B(T)=\frac{2(e^{\lambda T }-1)}{(\lambda+b)(e^{\lambda T }-1)+2\lambda}.$$
where $r_0$ is the initial value of the interest rate, and $\lambda = \sqrt{b^2+2\sigma^2}$. 
Let $\E\widehat{B}(\dt_{(l)})$  a discrete-time weak approximation of $B(0,T)$ with the time step $\dt_{(l)}$.  We consider the $L$-level Monte Carlo estimator : 
 $$\widehat{Y}_T = \frac{1}{N_0}  \sum_{i=1}^{N_0} \widehat{B}^{(i)}(\dt_{(0)}) + \sum_{l=1}^L   \frac{1}{N_l}  \sum_{i=1}^{N_l} \Big(\widehat{B}^{(i)}(\dt_{(l)}) - \widehat{B}^{(i)}(\dt_{(l-1)})\Big).$$ 
For a targeted mean-square error $\epsilon^2$ on the computation of the quantity $B(0,T)$
$$
\E[(\widehat{Y}_T - B(0,T))^2] = \mathcal{O}(\epsilon^2), 
$$
one can choose the following a priori parametrization of the MLMC method   in order to minimize the computational time (complexity) (see \cite{Giles:2008aa,Giles:2008ab,Higham:2015aa}):  we use the estimation $L = \dfrac{\log \epsilon ^{-1}}{\log{2}}$; from one level to the next one the time step is divided by $2$,  ${\dt}_{(l)}  = \tfrac{1}{2^{(l+1)}}$;  the number of trajectories to simulate is estimated with Giles formula \cite{Giles:2008aa}
$$N_l = \frac{2}{\epsilon^2} \sqrt{V_l \dt_l} \left( \sum_{l=0}^L \sqrt{V_l / \dt_{(l)}}\right),$$ 
with $V_l  = \overline{\mathbb{V}ar}\Big(\widehat{B}^{(1)}(\dt_{(l)}) - \widehat{B}^{(1)}(\dt_{(l-1)})\Big)$. 
As an estimator for the bias variance, the strong rate of convergence of the discretisation scheme enters  as a key ingredient in the $N_l$ a priori estimation. 
{A scheme with a reduced strong bias will then allow a smaller $N_l$.  We apply the MLMC computation for the SMS, the PMS, the AIS and the BMS. }

We summarize the results of the performance comparison between the four schemes in Table \ref{table:CPUTimeMLMC}.  
The computation have been run using  a initial interest rate $r_0 = 1$, the maturity of the bond $T=1$, the drift parameters $a=b=10$, the volatility $\sigma=1$. For the MLMC simulation, we fix a minimum number of trajectories equal to $500$, and a minimal number of levels equal to $6$. 

In Table \ref{table:CPUTimeMLMC}, we give the measures of the CPU time for a set of three decreasing targeted errors, as long as the effective measured error and the total number of simulated trajectories. 
As expected, the required threshold error has been reached by the MLMC strategy. 
As also expected (see Giles \cite{Giles:2008ab}), Milstein schemes perform  better than their Euler versions.  Finally, as $\epsilon$ decreases, the SMS clearly performs better than his PMS version. 

\begin{table}[h!]
\begin{center}
\bgroup
\def\arraystretch{2}
\begin{tabular}{|c|c|c|c|c|}
\hline
\begin{tabular}{c}
{$\epsilon$ = 1.0e-03}\\[-0.45cm]
{\footnotesize{($L=\,\,9$, $\dt_{(L)}=1/ 2^{10}$)}} 
\end{tabular}  
& SMS & PMS & AIS & BMS \\  \hline 
\begin{tabular}{c}
CPU time\\[-0.15cm]
($N_0 + \cdots + N_L$) \\[-0.45cm]
(observed error)
\end{tabular}  
&
\begin{tabular}{c}
0.2304\\[-0.15cm]
(792 651)  \\[-0.45cm]
(1.970e-05)
\end{tabular}  
& 
\begin{tabular}{c}
0.2657  \\[-0.15cm]
(950 838)  \\[-0.45cm]
(3.347e-04)
\end{tabular}  
&    
\begin{tabular}{c}
0.264    \\[-0.15cm]
(990 769)  \\[-0.45cm]
(3.132e-04 )
\end{tabular}  
& 
\begin{tabular}{c}
0.274  \\[-0.15cm]
(992 432)  \\[-0.45cm]
(3.292e-04)
\end{tabular}  \\ \hline   
\end{tabular}
\begin{tabular}{|c|c|c|c|c|}
\hline
\begin{tabular}{c}
{$\epsilon$ = 1.0e-04}\\[-0.45cm]
{\footnotesize{($L=13$,  $\dt_{(L)} = 1/ 2^{14}$)}}
\end{tabular}  
& SMS & PMS & AIS & BMS \\  \hline 
\begin{tabular}{c}
CPU time\\[-0.15cm]
($N_0 + \cdots + N_L$) \\[-0.45cm]
(observed error)
\end{tabular}  
&
\begin{tabular}{c}
16.871\\[-0.15cm]
{\footnotesize{(56 229 224)}} \\[-0.45cm]
(4.870e-05)
\end{tabular}  
& 
\begin{tabular}{c}
20.843 \\[-0.15cm]
{\footnotesize{(70 876 600)}}  \\[-0.45cm]
(1.091e-04)
\end{tabular}  
&  
\begin{tabular}{c}
17.311   \\[-0.15cm]
{\footnotesize{(73 824 621)}}  \\[-0.45cm]
(9.538e-06)
\end{tabular}  
& 
\begin{tabular}{c}
16.95   \\[-0.15cm]
{\footnotesize{(73 668 115)}}  \\[-0.45cm]
(2.203e-06)
\end{tabular}  \\ \hline   
\end{tabular}
\begin{tabular}{|c|c|c|c|c|}
\hline
\begin{tabular}{c}
{$\epsilon$ = 1.0e-05}\\[-0.45cm]
{\footnotesize{($L=16$,  $\dt_{(L)} = 1/ 2^{17}$)}}
\end{tabular}  
& SMS & PMS & AIS & BMS \\  \hline 
\begin{tabular}{c}
CPU time\\[-0.15cm]
($N_0 + \cdots + N_L$) \\[-0.45cm]
(observed error)
\end{tabular}  
&
\begin{tabular}{c}
1589.6 \\[-0.15cm]
{\scriptsize{(5 531 879 264)}} \\[-0.45cm]
(4.752e-06)
\end{tabular}  
& 
\begin{tabular}{c}
1910.7 \\[-0.15cm]
{\scriptsize{(6 913 546 698)}}  \\[-0.45cm]
(5.889e-06)
\end{tabular}  
&  
\begin{tabular}{c}
1576.4   \\[-0.15cm]
{\scriptsize{(7 368 734 119)}}  \\[-0.45cm]
(3.912e-06)
\end{tabular}  
& 
\begin{tabular}{c}
1540.2   \\[-0.15cm]
{\scriptsize{(7 333 474 098)}}  \\[-0.45cm]
(5.653e-07)
\end{tabular}  \\ \hline   
\end{tabular}
\egroup
\end{center}
\caption{CPU time to achieve the target error for the different schemes. The observed error is $|\widehat{Y}_T - B(0,T)|$.  \label{table:CPUTimeMLMC}}
\end{table}
\subsection{Conclusion}

In this paper we have recovered the classical rate of convergence of the Milstein scheme in a context of non smooth diffusion coefficient, although we have  to impose some restrictions over the parameters of the SDE  \eqref{exactProcess} to ensure the theoretical order one of convergence. Typically,  if the quotient $b(0)/\sigma^2$ is big enough we will observe the optimal convergence rate.
 
In the numerical simulations we have observed that, despite the fact it is necessary to impose some restriction over the parameters of SDE \eqref{exactProcess} to obtain the order one convergence, Hypothesis \ref{hypo:2} seems  to be not optimal, specially for $\alpha=\tfrac{1}{2}$.   Also, through numerical simulations, we have observed that the use of SMS  could improve the computation times in a Multilevel Monte Carlo framework, at least as well as the (CIR specialized) one-order  schemes. 

Although our  result seems more restrictive in term of hypotheses on the set of parameters, in particular if we compare SMS  with  Lamperti's transformation-based schemes  (see the recent works in \cite{Alfonsi:2013aa} and  \cite{Chassagneux:2015aa}, the SMS  can be applied to a more general class of drifts functions and  in various  contexts. It is thus a useful complement  of the existing literature. 

\section{Proofs for preliminary lemmas} \label{sec:ProofForPreliminaryResultsOnXbar}

\subsection{On the Local Error of the SMS}

\begin{proof}[Proof of Lemma  \ref{lem:localErrorOfTheScheme}]
From the definition of $\X$, and the algebraic inequality for positive real numbers $(a_1+\ldots+a_n)^p\leq n^p(a_1^p+\ldots+a_n^p)$ we have
\begin{align*}
\left|\X_t  - \Xe{t} \right|^{2p} 
&\leq 3^{2p}\left(b(\Xe{t})^{2p} (t-\eta(t))^{2p} +\sigma^{2p} \Xe{t}^{2\alpha p} (W_t-W_{\eta(t)})^{2p}\right.\\
&\qquad\qquad \left. +\frac{\alpha^{2p} \sigma^{4p} }{2^{2p} }\Xe{t}^{(2\alpha-1)2p}\left[(W_t-W_{\eta(t)})^2 -(t-\eta(t)) \right]^{2p} \right). 
\end{align*}
Thanks to the linear growth of $b$, Lemma \ref{lem:finitenessOfTheMomentsOfXbar} and the properties of the Brownian Motion it is quite easy to conclude the existence of a constant $C$ such that 
$$\E\left[|\X_t  - \Xe{t}|^{2p}\right]  \leq C\dt^p,$$
from where the result follows.
\end{proof}

\subsection{On the Probability of SMS being close to zero}
From $b_\sigma(\alpha)$ and $K(\alpha)$ defined in \eqref{def:b_alpha},  let us recall the notation 
\begin{align*}
\xx :=  \dfrac{ b_\sigma(\alpha)}{K(\alpha)}
\end{align*}
introduced in Lemma \ref{lem:ZbarPositiveSmallx}. As $b_\sigma(\alpha)>0$ under Hypothesis \ref{hypo:2}-$(i)$,  $\xx$ is bounded away from $0$. In particular, 
\begin{align*}
\lim_{\alpha\to \tfrac{1}{2}} \xx= \frac{(b(0)-\sigma^2/4)}{K},\quad\mbox{whereas}\quad \lim_{\alpha\to 1}  \xx= \frac{b(0)}{K+\sigma^2/2}.
\end{align*}
\begin{proof}[Proof of Lemma \ref{lem:ZbarPositiveSmallx}]
Denoting $\dW_s = (W_s-W_{\eta(s)})$, and $\ds=s-\eta(s)$, we have for all $s\in[0,T]$,
\begin{equation*}
\begin{split}
\Z{s}= \frac{\alpha\sigma^2}{2}\Xe{s}^{2\alpha-1}\dW_s^2 
 + \sigma \Xe{s}^\alpha \dW_s+ \Xe{s} +\left(b(\Xe{s}) - \frac{\alpha\sigma^2}{2}\Xe{s}^{2\alpha-1}\right)\ds.
\end{split}
\end{equation*}
From the Lipschitz property of $b$ and the following bound  
for any $x>0$
\begin{equation}\label{eq:boundForBadTerm}
x^{2\alpha-1} \leq 4(1-\alpha)^2  + (2\alpha-1)[2(1-\alpha)]^{-\frac{2(1-\alpha)}{2\alpha-1}}x,
\end{equation}
we have
\begin{equation}\label{eq:linearboundForZbar}
\Z{s}\geq \frac{\alpha\sigma^2}{2}\Xe{s}^{2\alpha-1}\dW_s^2 
 + \sigma \Xe{s}^\alpha\dW_s+  \Xe{s} +\left(b_\sigma(\alpha)  - K(\alpha)\Xe{s}   \right)\ds.
\end{equation}
So,
\begin{align*}
\P\Big[\Z{s}&\leq (1-\rho ){b_\sigma}(\alpha)\ds ,\; \Xe{s}< {\rho} \xx \Big] \\
&\leq \P\Big[\frac{\alpha\sigma^2}{2}\Xe{s}^{2\alpha-1}\dW_s^2 + \sigma \Xe{s}^\alpha\dW_s  
+\Xe{s} +\left(\rho {b_\sigma}(\alpha)  - K(\alpha)\Xe{s}   \right)\ds\leq 0 ,\; \Xe{t}< {\rho} \xx \Big].
\end{align*}
From the independence of $\dW_s$ with respect to $\F_{\eta(s)}$, if we denote by $\mathcal{N}$ a standard Gaussian variable, we have
\begin{align*}
\P\Big[\Z{s}&\leq (1-\rho ){b_\sigma}(\alpha)\ds ,\; \Xe{s}< {\rho} \xx \Big] \\
   &\leq \E\Big[ \P\Big(  \frac{\alpha\sigma^2}{2}x^{2\alpha-1}\ds \mathcal{N}^2 + \sigma\sqrt{\ds} x^\alpha \mathcal{N} 
   +x+\left[\rho {b_\sigma}(\alpha)  - K(\alpha)x \right]\ds\leq 0  \Big)\Big|_{x=\Xe{s}}\ind{\{\Xe{s}< {\rho} \xx \}} \Big].
\end{align*}
Notice that in the right-hand side we have a quadratic polynomial of a standard Gaussian random variable. Let us compute its discriminant:
\begin{align*}
\Delta(x,\alpha) &= \sigma^2x^{2\alpha}\ds  - 2\alpha\sigma^2x^{2\alpha-1}\ds \left(x + \left(\rho {b_\sigma}(\alpha)  -K(\alpha)x \right)\ds \right)\\
& =  -(2\alpha-1)\sigma^2x^{2\alpha} \ds  - 2\alpha\sigma^2x^{2\alpha-1} \ds^2 \left(\rho {b_\sigma}(\alpha)  - K(\alpha)x\right).  
\end{align*}
Since ${b_\sigma}(\alpha)>0$,  we have $\Delta(x,\alpha)<0$ for all $\alpha\in[\tfrac{1}{2},1)$, and $x\leq {\rho} \xx$. So, for all $\ds\leq\dt$ we have
$$\P\left[\Z{s}\leq (1-\rho ){b_\sigma}(\alpha)\ds ,\; \Xe{s}< {\rho} \xx  \right]=0,$$
taking $\ds=\dt$ we conclude on the Lemma. 
\end{proof}

\begin{proof}[Proof of Lemma \ref{lem:ZbarRemainsPositive}]
We have
\begin{align}\label{eq:firstBoundReflectionProof}
\P\left( \inf_{t_k\leq s<t_{k+1}}\Z{s}\leq0\right)
=\P\left( \inf_{t_k\leq s<t_{k+1}}\Z{s}\leq0, \X_{t_k}\geq \xx \right)
+\P\left( \inf_{t_k\leq s<t_{k+1}}\Z{s}\leq0, \X_{t_k}< \xx \right)
\end{align}
We start with the second term in the right hand of the last inequality. By continuity of the path of $\Z{}$ and Lemma \ref{lem:ZbarPositiveSmallx}, we have
$$\P\left( \inf_{t_k\leq s<t_{k+1}}\Z{s}\leq0, \X_{t_k}< \xx \right) = \sum_{s\in\Q\cap(t_{k},t_{k+1}]}{\P\left( \Z{s}\leq0, \X_{t_k}< \xx \right)}=0.$$
On the other hand,  from  \ref{eq:linearboundForZbar} we have
\begin{equation}\label{eq:BrownianLoweboundForZbar}
\Z{s} \geq \sigma \Xe{s}^\alpha\dW_s +\left(1  - K(\alpha) \ds  \right)\Xe{s}+{b_\sigma}(\alpha)\ds.
\end{equation}
Then
\begin{align*}
&\P\Big(  \inf_{t_k\leq s<t_{k+1}}\Z{s}\leq0,\;   \X_{t_k}\geq \xx \Big)  \\
&\quad\leq \P \Big( \inf_{t_k<s\leq t_{k+1} } \frac{\Xe{s}^{1-\alpha}}{\sigma}   +\frac{\left(b_\sigma(\alpha)  - K(\alpha)\Xe{s}   \right)\ds}{ \sigma \Xe{s}^\alpha}+\dW_s \leq  0 ,\;\; \X_{t_k}\geq \xx \Big) \\
&\quad = \E\Big[ \psi(\X_{t_k}) \ind{\left\{\X_{t_k}\geq\xx\right\}} \Big], 
\end{align*}
where the last equality holds thanks to the Markov Property of the Brownian motion, for
$$\psi(x)= \P\Big( \inf_{0<u\leq \dt}\frac{x^{1-\alpha}}{\sigma} + \frac{{b_\sigma}(\alpha)-K(\alpha)x}{\sigma x^\alpha}u + B_u    \leq 0\Big),$$
where $(B_t)$ denotes a Brownian Motion independent of $(W_t)$.

If $(B_t^\mu,{0\leq t\leq T})$ is a Brownian motion with drift $\mu$, starting at $y_0$, then for all $y\leq y_0$, we have (see \cite{borodin2002handbook}): 
\begin{equation}\label{eq:BoundBM}
\begin{split}
\P\left( \inf_{0<s\leq t}B_s^\mu \leq y \right) &= \frac{1}{2}\erfc\left( \frac{y_0-y}{\sqrt{2t}} + \frac{\mu\sqrt{t}}{\sqrt{2}} \right)\\
&\quad+ \frac{1}{2}\exp\left(- 2\mu(y_0-y) \right)\erfc\left( \frac{y_0-y}{\sqrt{2t}} - \frac{\mu\sqrt{t}}{\sqrt{2}} \right),
\end{split}
\end{equation}
where for $z\in\R$, $\erfc{z} = \sqrt{2/\pi}\int_{\sqrt{2}z}^\infty{\exp\left( -{u^2}/{2} \right)du}.$ In our case $y_0 = x^{1-\alpha}/\sigma$, $\mu = (b_\sigma(\alpha)-K(\alpha)x)/\sigma x^\alpha$, and $y=0$. Then
\begin{align*}
\psi(x) &= \frac{1}{2}\erfc\left( \frac{\left[(1-K(\alpha)\dt)x +{b_\sigma}(\alpha)\dt  \right]}{\sqrt{2\dt}\sigma x^\alpha}  \right)   \\
& \quad+\frac{1}{2}\exp\left(-\frac{2[{b_\sigma}(\alpha)-K(\alpha)x]x}{\sigma^2x^{2\alpha}}  \right)\erfc\left( \frac{x -[b_\sigma(\alpha)-K(\alpha)x]\dt }{\sqrt{2\dt}\sigma x^\alpha}  \right).
\end{align*}
Since $\dt\leq 1/2K(\alpha)$, for any $x\geq\xx$, the arguments in the $\erfc$ function in the last equality are both positives, and then recalling that for all $z>0$ $\erfc(z)\leq \exp(-z^2)$, we obtain 
\begin{align*}
\psi(x) &\leq \frac{1}{2}\exp\left( -\frac{\left[(1-K(\alpha)\dt)x +{b_\sigma}(\alpha)\dt  \right]^2}{{2\dt}\sigma^2 x^{2\alpha}}  \right)   \\
& \quad+\frac{1}{2}\exp\left(-\frac{2[{b_\sigma}(\alpha)-K(\alpha)x]x}{\sigma^2x^{2\alpha}}  \right)\exp\left(- \frac{[x -[b_\sigma(\alpha)-K(\alpha)x]\dt]^2 }{{2\dt}\sigma^2 x^{2\alpha}}  \right)\\
&\leq \exp\left( -\frac{\left[(1-K(\alpha)\dt)x +{b_\sigma}(\alpha)\dt  \right]^2}{{2\dt}\sigma^2 x^{2\alpha}}  \right).
\end{align*}
So, for all  $x\geq\xx$
$$\psi(x)\leq \exp\left(- \frac{(1-K(\alpha)\dt)^2x^{2(1-\alpha)} }{2\sigma^2\dt}  \right).$$
Then
\begin{align*}
 \P\Big(  \inf_{t_k\leq s<t_{k+1}}\Z{s}\leq0,\;   \X_{t_k}\geq \xx \Big)  
&\leq  \E\Big[ \exp\left( -\frac{(1-K(\alpha)\dt)^2\X_{t_k}^{2(1-\alpha)} }{2\sigma^2\dt}  \right) \ind{\left\{\X_{t_k}\geq\xx\right\}} \Big]\\
&\leq  \exp\left( -\frac{(1-K(\alpha)\dt)^2\xx^{2(1-\alpha)} }{2\sigma^2\dt}  \right),
\end{align*}
and finally, choosing  $\gamma={\xx^{2(1-\alpha)} }/{8\sigma^2}$, we get 
 $$\P\Big(  \inf_{t_k\leq s<t_{k+1}}\Z{s}\leq0 \Big)  \leq \exp\left( -\frac{\gamma }{\dt}  \right).$$
\end{proof}

\subsection{On the Local Time of the SMS at Zero}

\subsubsection*{The Stopping Times $(\stalpha,\tfrac{1}{2}\leq\alpha<1)$}
In what follows, we consider
\begin{equation}\label{defStoppingTime}
\stalpha = \inf\left\{s>0: \X_s< (1-\sqrt{\alpha})b_\sigma(\alpha)\dt \right\}.
\end{equation}

\begin{lem}\label{lem:ProbabilityOfThetaLessThanT}
Assume $b(0)>2\alpha(1-\alpha)^2\sigma^2$, and $\dt \leq 1/(2K(\alpha)) \wedge x_0/[(1-\sqrt{\alpha})b_\sigma(\alpha)]$. Then there exists a positive constant $\gamma$ depending on $\alpha$, $b(0)$, $K$ and $\sigma$ but not on $\dt$ such that
\begin{equation}\label{boundPtauLessT}
\P(\stalpha\leq T) \leq \frac{T}{\dt}\exp\left(\frac{-\gamma}{\dt}\right).
\end{equation}
\end{lem}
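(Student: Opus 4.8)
The plan is to follow the blueprint of the proof of Lemma~\ref{lem:ZbarRemainsPositive}, since $\{\stalpha\le T\}$ is precisely the event that the scheme $\X$ dips below the small level $c:=(1-\sqrt\alpha)\,b_\sigma(\alpha)\dt$ somewhere on $[0,T]$. First I would partition $[0,T]$ into its $N=T/\dt$ steps and use a union bound,
$$\P(\stalpha\le T)\le\sum_{k=0}^{N-1}\P\Big(\inf_{t_k<s\le t_{k+1}}\X_s<c\Big),$$
and then split each summand according to whether $\X_{t_k}<\sqrt\alpha\,\xx$ or $\X_{t_k}\ge\sqrt\alpha\,\xx$. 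The cut level $\sqrt\alpha\,\xx$ and the threshold $c$ are chosen to dovetail with Lemma~\ref{lem:ZbarPositiveSmallx} taken at $\rho=\sqrt\alpha$. Throughout I would use $\X_s=|\Z{s}|$, so that $\{\X_s<c\}\subseteq\{\Z{s}\le c\}=\{\Z{s}\le(1-\sqrt\alpha)b_\sigma(\alpha)\dt\}$.

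For the piece on $\{\X_{t_k}<\sqrt\alpha\,\xx\}$, note that for $s\in(t_k,t_{k+1}]$ one has $\Xe{s}=\X_{t_k}$, so Lemma~\ref{lem:ZbarPositiveSmallx} with $\rho=\sqrt\alpha$ gives $\P(\Z{s}\le c,\ \Xe{s}<\sqrt\alpha\,\xx)=0$ for each fixed $s$. Reducing the infimum over the interval to a countable supremum over rationals by path-continuity, exactly as in the proof of Lemma~\ref{lem:ZbarRemainsPositive}, makes this whole piece vanish.

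For the piece on $\{\X_{t_k}\ge\sqrt\alpha\,\xx\}$, I would insert the affine lower bound \eqref{eq:BrownianLoweboundForZbar}: writing $x=\X_{t_k}$ and $u=s-t_k$, on $(t_k,t_{k+1}]$,
$$\Z{s}\ge\sigma x^\alpha(W_s-W_{t_k})+(1-K(\alpha)u)x+b_\sigma(\alpha)u.$$
Dividing by $\sigma x^\alpha$ and conditioning on $\F_{t_k}$ via the Markov property of $W$, the $k$-th term is bounded by $\E[\tilde\psi(\X_{t_k})\ind{\{\X_{t_k}\ge\sqrt\alpha\,\xx\}}]$, where $\tilde\psi(x)$ is the probability that a Brownian motion with drift $\mu=(b_\sigma(\alpha)-K(\alpha)x)/(\sigma x^\alpha)$ started at $y_0=x^{1-\alpha}/\sigma$ reaches the level $y=c/(\sigma x^\alpha)$ before time $\dt$. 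Formula \eqref{eq:BoundBM} gives $\tilde\psi$ as a sum of two $\erfc$ terms; the identity $z_1^2-z_2^2=2\mu(y_0-y)$, with $z_1=\tfrac{(y_0-y)+\mu\dt}{\sqrt{2\dt}}$ and $z_2=\tfrac{(y_0-y)-\mu\dt}{\sqrt{2\dt}}$, collapses them to yield $\tilde\psi(x)\le\exp(-z_1^2)$ where $z_1=[x(1-K(\alpha)\dt)+\sqrt\alpha\,b_\sigma(\alpha)\dt]/(\sqrt{2\dt}\,\sigma x^\alpha)$. Using $(1-K(\alpha)\dt)^2\ge\tfrac14$ (from $\dt\le\tfrac1{2K(\alpha)}$) and $x\ge\sqrt\alpha\,\xx$, I obtain $z_1^2\ge\gamma/\dt$ with $\gamma=\alpha^{1-\alpha}\xx^{2(1-\alpha)}/(8\sigma^2)>0$ independent of $\dt$. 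Hence each large-piece term is at most $e^{-\gamma/\dt}$, and summing the $N=T/\dt$ terms gives $\P(\stalpha\le T)\le\tfrac{T}{\dt}e^{-\gamma/\dt}$.

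The main obstacle I anticipate is justifying $\erfc(z)\le e^{-z^2}$, which requires both $z_1$ and $z_2$ to be strictly positive for every $x\ge\sqrt\alpha\,\xx$ and every admissible $\dt$, whereas Lemma~\ref{lem:ZbarRemainsPositive} only needed this for $x\ge\xx$. Positivity of $z_1$ is immediate, but positivity of $z_2$ reduces to $x(1+K(\alpha)\dt)\ge(2-\sqrt\alpha)b_\sigma(\alpha)\dt$, which after inserting $x\ge\sqrt\alpha\,\xx$ becomes $\dt\le\sqrt\alpha/[2K(\alpha)(1-\sqrt\alpha)]$. This is implied by $\dt\le\tfrac1{2K(\alpha)}$ precisely because $\alpha\ge\tfrac12$ forces $1-\sqrt\alpha\le\sqrt\alpha$. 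This is the one place where the exact shapes of the threshold $(1-\sqrt\alpha)b_\sigma(\alpha)\dt$ and of the cut level $\sqrt\alpha\,\xx$ must be matched so that the estimate covers the whole range $\alpha\in[\tfrac12,1)$; one must also check the side condition $y\le y_0$ (equivalently $c\le x$, which follows from $c\le\sqrt\alpha\,\xx\le x$ under the stated step-size bound) so that \eqref{eq:BoundBM} applies.
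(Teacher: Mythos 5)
Your proposal is correct and follows essentially the same route as the paper's proof: the same union bound over time steps, the same split at the level $\sqrt{\alpha}\,\xx$ so that Lemma~\ref{lem:ZbarPositiveSmallx} with $\rho=\sqrt{\alpha}$ kills the small-$\X_{t_k}$ piece, and the same reduction of the large-$\X_{t_k}$ piece to the drifted-Brownian hitting probability \eqref{eq:BoundBM} bounded via $\erfc(z)\leq e^{-z^2}$, yielding the identical constant $\gamma=(\xx\sqrt{\alpha})^{2(1-\alpha)}/(8\sigma^2)$. Your explicit verification that the second $\erfc$ argument is positive for $x\geq\sqrt{\alpha}\,\xx$ under $\dt\leq 1/(2K(\alpha))$ is a detail the paper asserts without spelling out, and it checks out.
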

\begin{proof} First, notice that the condition $\dt< x_0/[(1-\sqrt{\alpha})b_\sigma(\alpha)]$ ensures that the stopping time $\stalpha$ is almost surely strictly positive. 

To enlighten the notation along this proof, let us call $\lo :=(1-\sqrt{\alpha})b_\sigma(\alpha)$, and $\zeta_k = \inf_{t_k<s\leq t_{k+1}}{\Z{s}}$. We split the proof in three steps.
\paragraph{Step 1.}  Let us prove that for a suitable function $\psi:\R\to[0,1]$ and the set $A_k = \{\X_{t_k}>\xx{\sqrt{\alpha}}\} \in\F_{t_k}$:
\begin{equation}\label{eq:boundForPtheta}
\P\left( \Theta_\alpha\leq T \right) \leq \sum_{k=0}^{N-1}{\E\left( \psi( \X_{t_k})\ind{A_k}  \right)}
\end{equation}
Indeed,
$$\P\left( \Theta_\alpha\leq T \right) \leq \sum_{k=0}^{N-1}{\P\left( \zeta_k \leq  \lo\dt ,\;\; \X_{t_k}>\lo\dt  \right)}.$$
But, for each $k=0,\ldots,N-1$
\begin{align*}
\P\left(  \zeta_k \leq  \lo\dt ,\;\; \X_{t_k}>\lo\dt  \right) &=  \P\left( \zeta_k  \leq \lo\dt ,\;\; \X_{t_k}>\lo\dt, \;\; \X_{t_k} <  \xx{\sqrt{\alpha}}\right)  \\
&\quad +  \P\left(   \zeta_k \leq \lo\dt ,\;\;   \X_{t_k}>\lo\dt,\;\; \X_{t_k}\geq \xx{\sqrt{\alpha}} \right).  
\end{align*}
Since $\lo \dt= (1-\sqrt{\alpha}){b_\sigma}(\alpha)\dt\leq \xx{\sqrt{\alpha}},$  we have
\begin{align*}
  \P\Big( \zeta_k\leq  \lo\dt ,\;\;   \X_{t_k}>  \lo\dt,\;\;& \X_{t_k}<  \xx{\sqrt{\alpha}} \Big)\\
    &\leq  \P\left(  \zeta_k\leq  \lo\dt ,\;\; \X_{t_k}<  \xx{\sqrt{\alpha}} \right)  \\
 &\leq  \sum_{s \in \Q\cap (t_k,t_{k+1}]}\P\left( {\Z{s}}\leq \lo\dt ,\;\; \X_{t_k}<  \xx{\sqrt{\alpha}} \right) = 0,
\end{align*}
thanks to Lemma \ref{lem:ZbarPositiveSmallx}. On the other hand, we have
\begin{align*}
 \P\Big(  \zeta_k  \leq \lo\dt& ,\;  \X_{t_k} >\lo\dt,\; \X_{t_k}\geq \xx{\sqrt{\alpha}} \Big)  \\
&= \P\left( \zeta_k \leq  \lo\dt ,\;\; \X_{t_k}\geq \xx{\sqrt{\alpha}} \right) \\
&\leq \P \Big( \inf_{t_k<s\leq t_{k+1} } \frac{\Xe{s}^{1-\alpha}}{\sigma}   +\frac{\left(b_\sigma(\alpha)  - K(\alpha)\Xe{s}   \right)\ds}{ \sigma \Xe{s}^\alpha}+\dW_s \leq  \frac{\lo\dt}{ \sigma \Xe{s}^\alpha} ,\;\; \X_{t_k}\geq \xx{\sqrt{\alpha}}  \Big) \\
& \quad = \E\Big[ \psi(\X_{t_k}) \ind{\left\{\X_{t_k}>\xx{\sqrt{\alpha}}\right\}} \Big], 
\end{align*}
where the inequality comes from \eqref{eq:linearboundForZbar}, and the last equality holds thanks to the Markov Property of the Brownian motion, for
$$\psi(x)= \P\Big( \inf_{0<u\leq \dt}\frac{x^{1-\alpha}}{\sigma} + \frac{{b_\sigma}(\alpha)-K(\alpha)x}{\sigma x^\alpha}u + B_u    \leq  \frac{\lo\dt}{\sigma x^\alpha}\Big),$$
where $(B_t)$ denotes a Brownian Motion independent of $(W_t)$. Summarizing 
$$\P\left(  \zeta_k \leq  \lo\dt ,\;\; \X_{t_k}>\lo\dt  \right)  \leq \E\Big[ \psi(\X_{t_k}) \ind{\left\{\X_{t_k}>\xx{\sqrt{\alpha}}\right\}} \Big], $$
and we have  \eqref{eq:boundForPtheta} for $A_k = \left\{\X_{t_k}>\xx{\sqrt{\alpha}}\right\}$.

\paragraph{Step 2.} Let us prove that for all $x\geq \xx{\sqrt{\alpha}}$:
\begin{equation}\label{eq:boundForPsi}
\psi(x)\leq \exp\left( -  \frac{(1-K(\alpha)\dt)^2 (\xx\sqrt{\alpha})^{2(1-\alpha)} }{2\sigma^2\dt } \right).
\end{equation}

Applying again \eqref{eq:BoundBM}, we have
\begin{align*}
\psi(x) &= \frac{1}{2}\erfc\left( \frac{\left[(1-K(\alpha)\dt)x + \sqrt{\alpha}{b_\sigma}(\alpha)\dt  \right]}{\sqrt{2\dt}\sigma x^\alpha}  \right)   \\
& \quad+\frac{1}{2}\exp\left(-\frac{2[{b_\sigma}(\alpha)-K(\alpha)x][x-(1-\sqrt{\alpha}){b_\sigma}(\alpha)\dt]}{\sigma^2x^{2\alpha}}  \right)  \\
 &\qquad \times \erfc\left( \frac{1}{\sqrt{2\dt}\sigma x^\alpha} \left[(1+K(\alpha)\dt)x -(2-\sqrt{\alpha}){b_\sigma}(\alpha)\dt   \right] \right)  \\
&=:A(x)+B(x).
\end{align*}
Since $\dt\leq 1/(2K(\alpha))$, and  $\erfc(z)\leq \exp(-z^2)$ for all $z>0$ we have
\begin{align*}
A(x)&\leq \frac{1}{2}\exp\left( -  \frac{\left[(1-K(\alpha)\dt)x + \sqrt{\alpha}{b_\sigma}(\alpha)\dt  \right]^2}{2\sigma^2\dt x^{2\alpha}} \right) \leq \frac{1}{2}\exp\left( -  \frac{(1-K(\alpha)\dt)^2x^{2(1-\alpha)} }{2\sigma^2\dt } \right).
\end{align*}
On the other hand, for $x\geq\xx{\sqrt{\alpha}}$, and $\dt\leq 1/(2K(\alpha))$, it follows 
$$x> (2-\sqrt{\alpha}){b_\sigma}(\alpha)\dt/(1+K\dt),$$ 
so the argument of the function $\erfc$ in $B$ is positive, and then
\begin{align*}
B(x)&\leq \frac{1}{2}\exp\left(-\frac{2[{b_\sigma}(\alpha)-K(\alpha)x][x-(1-\sqrt{\alpha}){b_\sigma}(\alpha)\dt]}{\sigma^2x^{2\alpha}}  \right)  \\
&\quad\times \exp\left(- \frac{ \left[(1+K(\alpha)\dt)x -(2-\sqrt{\alpha}){b_\sigma}(\alpha)\dt   \right]^2}{2\dt\sigma^2 x^{2\alpha}} \right)  \\
 &\qquad  =\frac{1}{2}\exp\left( -\frac{\left[(1-K(\alpha)\dt)x + \sqrt{\alpha}{b_\sigma}(\alpha)\dt  \right]^2}{2\dt\sigma^2 x^{2\alpha}}  \right)\leq \frac{1}{2}\exp\left( -  \frac{(1-K(\alpha)\dt)^2x^{2(1-\alpha)} }{2\sigma^2\dt } \right).
\end{align*}
So 
$$\psi(x) = A(x)+B(x) \leq \exp\left( -  \frac{(1-K(\alpha)\dt)^2x^{2(1-\alpha)} }{2\sigma^2\dt } \right),$$
and since the right-hand side is decreasing on $x$, we have \eqref{eq:boundForPsi}.

\paragraph{Step 3. } Let us conclude. Putting together \eqref{eq:boundForPtheta} and \eqref{eq:boundForPsi} we have
\begin{align*}
\P\left( \Theta_\alpha\leq T \right) &\leq \sum_{k=0}^{N-1}{\E\left( \psi( \X_{t_k})\ind{\left\{\X_{t_k}>\xx{\sqrt{\alpha}}\right\}}  \right)}\\
&\leq  \sum_{k=0}^{N-1}{ \exp\left( -  \frac{(1-K(\alpha)\dt)^2 (\xx\sqrt{\alpha})^{2(1-\alpha)} }{2\sigma^2\dt } \right)\P\left(\X_{t_k}>\xx{\sqrt{\alpha}}  \right)}\\
&\leq \frac{C}{\dt}\exp\left( -\frac{\gamma}{\dt}\right)
\end{align*}
with $\gamma = (\xx\sqrt{\alpha})^{2(1-\alpha)} / (8 \sigma^2)$. 
\end{proof}

\begin{proof}[Proof of Lemma \ref{lem:Control2ndMomentLocalTime}]
From \eqref{XbarIsSemiMartingale}, standard arguments show that $\E[ L_T^0(\X)^4]\leq C(T)$. On the other hand, thanks to Corollary VI.1.9 on Revuz and Yor \cite[p. 212]{Revuz:1991aa}, we have almost surely
\begin{align*}
L_{T\wedge\stalpha}^0(\X)&=\lim_{\eps\downarrow0}\frac{1}{\eps}\int_{0}^{T\wedge\stalpha}{\ind{[0,\eps)}(\X_s)d\langle \X \rangle_s}=0,
\end{align*}
because for $\eps<(1-\sqrt{\alpha})b_\sigma(\alpha)\dt$, and $s\leq T\wedge\stalpha$, $\ind{[0,\eps)}(\X_s) = 0.$ a.s.
Now, since 
\begin{align*}
L_T^0(\X) = L_T^0(\X)\ind{\{\stalpha<T\}}+  L_{T\wedge\stalpha}^0(\X)\ind{\{T\leq \stalpha\}} =L_T^0(\X)\ind{\{\stalpha<T\}},
\end{align*}
we can conclude that
\begin{align*}
\E[  L_T^0(\X)^2] =  \E[L_T^0(\X)^2\ind{\{\stalpha<T\}}] \leq \sqrt{\E[ L_T^0(\X)^4] \P\left( \stalpha<T\right)}\leq C\sqrt{\frac{1}{\dt}\exp\left( -\frac{\gamma}{\dt}\right)}.
\end{align*}
\end{proof}

\subsection{On the negative moments of the stopped increment process $(\Z{t\wedge\stalpha})$}

To prove Lemma \ref{lem:CorrectedLocalError} (see section \ref{sec:CorrectedLocalErrorProof} below), we need to control the negative moments of the stopped increment 
  process $\{\Z{t\wedge \stalpha}\}_{0\leq t\leq T}$.  This is the object of the following lemmas, that can be summarize in the following
\begin{lem}\label{lem:Lemmas}
Let $q\geq 1$.  Let $\stalpha$  be the stopping time defined in \eqref{defStoppingTime}.   Let us assume $\dt\leq \dmax(\alpha)$.  
Moreover, let us assume $b(0)>2\alpha(1-\alpha)^2$ when $\alpha\in(\tfrac{1}{2},1)$,  and  $b(0)>\frac{3}{2} \sigma^2(q+1)$ when $\alpha=\tfrac{1}{2}$. 
Then there exists a constant $C$ depending on $b(0)$, $\sigma$, $\alpha$, $T$ and $q$ but not on $\dt$, such that
$$\forall t\in[0,T],\quad\E\left[\Z{t\wedge \stalpha}^{\,-q}\right]\leq C\left(1 + \frac{1}{x_0^q}\right).$$
\end{lem}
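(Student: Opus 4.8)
The plan is to use that, strictly before the stopping time $\stalpha$, the increment process is positive and coincides with the scheme, so that $\Z{t\wedge\stalpha}^{-q}$ is a genuine positive random variable amenable to It\^o's formula. First I would record this positivity. The threshold $(1-\sqrt{\alpha})b_\sigma(\alpha)\dt$ defining $\stalpha$ in \eqref{defStoppingTime} is positive (since $b_\sigma(\alpha)>0$ under the standing assumption) and, because $\dt\leq\dmax(\alpha)\leq x_0/[(1-\sqrt{\alpha})b_\sigma(\alpha)]$, it is $\leq x_0$, so $\stalpha>0$ almost surely. Since $\X_s=|\Z{s}|$ is continuous and $\Z{t_k}=\X_{t_k}\geq0$ at grid points, $\Z{}$ cannot change sign on $[0,\stalpha]$: a sign change would force $\X$ to vanish and hence to cross the threshold beforehand. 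Thus $\Z{s}=\X_s\geq(1-\sqrt{\alpha})b_\sigma(\alpha)\dt>0$ for all $s\leq\stalpha$. In particular $\Z{s\wedge\stalpha}^{-q}$ is bounded by a ($\dt$-dependent) constant, which justifies all integrability below and the martingale property of the stochastic integrals; the whole point of the argument is to upgrade this crude bound into a $\dt$-independent one.

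Next, using the semimartingale decomposition \eqref{XbarIsSemiMartingale} with $\sgn(\Z{s})=+1$ and with vanishing local time on $[0,\stalpha]$ (as in the proof of Lemma \ref{lem:Control2ndMomentLocalTime}), on this set $\Z{s\wedge\stalpha}$ is the continuous It\^o process
\begin{equation*}
d\Z{s} = b(\Xe{s})\,ds + \Sigma_s\,dW_s, \qquad \Sigma_s := \sigma\Xe{s}^\alpha+\alpha\sigma^2\Xe{s}^{2\alpha-1}(W_s-W_{\eta(s)}).
\end{equation*}
Applying It\^o's formula to $x\mapsto x^{-q}$ and taking expectations (the martingale part vanishes by the previous step) gives
\begin{equation*}
\E\big[\Z{t\wedge\stalpha}^{-q}\big] = x_0^{-q} + \E\int_0^{t\wedge\stalpha}\Big[-q\,\Z{s}^{-q-1}b(\Xe{s}) + \tfrac{q(q+1)}{2}\,\Z{s}^{-q-2}\Sigma_s^2\Big]\,ds.
\end{equation*}
Since $\X_s=\Z{s}$ before $\stalpha$, we have $\Sigma_s=\sigma\Z{s}^\alpha-D_s(\X)$, so that $\Sigma_s^2=\sigma^2\Z{s}^{2\alpha}-2\sigma\Z{s}^\alpha D_s(\X)+D_s(\X)^2$ and the two leading contributions are the drift term of order $\Z{s}^{-q-1}$ and the diffusion term $\tfrac{q(q+1)\sigma^2}{2}\Z{s}^{-q-2+2\alpha}$.

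The heart of the argument is a sign/absorption analysis. Using the Lipschitz bound $b(x)\geq b(0)-Kx$ and $\Xe{s}\leq\Z{s}+|\Z{s}-\Xe{s}|$, the drift contributes $-q\,b(0)\Z{s}^{-q-1}$ plus the desired Gronwall term $qK\,\Z{s}^{-q}$ and an error $qK\,\Z{s}^{-q-1}|\Z{s}-\Xe{s}|$. When $\alpha>\tfrac12$ the exponent $-q-2+2\alpha$ lies strictly between $-q-1$ and $-q$, so by weighted Young's inequality $\Z{s}^{-q-2+2\alpha}\leq\eps\,\Z{s}^{-q-1}+C_\eps\,\Z{s}^{-q}$ with $\eps$ as small as we like; choosing $\eps<b(0)$ keeps the net $\Z{s}^{-q-1}$-coefficient strictly negative (only $b_\sigma(\alpha)>0$, i.e.\ $b(0)>2\alpha(1-\alpha)^2\sigma^2$, is needed, and it is already imposed by the positivity framework). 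When $\alpha=\tfrac12$ the two powers coincide at $\Z{s}^{-q-1}$ with net coefficient $-q\,b(0)+\tfrac{q(q+1)\sigma^2}{2}$; the strengthened hypothesis $b(0)>\tfrac32(q+1)\sigma^2$ makes this at most $-q(q+1)\sigma^2$, leaving a strictly negative margin to absorb the error terms. After collecting everything one reaches an inequality of the form $\E[\Z{t\wedge\stalpha}^{-q}]\leq x_0^{-q}+C\int_0^t\big(1+\sup_{u\leq s}\E[\Z{u\wedge\stalpha}^{-q}]\big)\,ds$, and Gronwall's inequality closes the estimate with the announced constant $C(1+x_0^{-q})$, independent of $\dt$.

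The main obstacle is controlling the error terms, namely $qK\,\Z{s}^{-q-1}|\Z{s}-\Xe{s}|$ together with the corrected pieces $q(q+1)\sigma\,\Z{s}^{-q-2+\alpha}D_s(\X)$ and $\tfrac{q(q+1)}{2}\Z{s}^{-q-2}D_s(\X)^2$: each carries a small factor (of order $\sqrt{\dt}$ or $\dt$, controlled by Lemma \ref{lem:CorrectedLocalError}) but is multiplied by a high negative power of $\Z{s}$, which can be as small as order $\dt$ near the stopping boundary, so naive H\"older bounds are inadmissible because of the correlation between $\Z{s}^{-q-1}$ and the increments. I would handle these by conditioning on $\F_{\eta(s)}$ and exploiting the explicit completing-of-the-square Gaussian law of $\Z{s}$ used in \eqref{eq:linearboundForZbar} and the proof of Lemma \ref{lem:ZbarPositiveSmallx}: given $\F_{\eta(s)}$, the quadratic representation of $\Z{s}$ in the Gaussian increment $W_s-W_{\eta(s)}$ (together with the lower bound \eqref{eq:boundForBadTerm} on $\Xe{s}^{2\alpha-1}$) yields explicit bounds for the conditional expectations $\E[\Z{s}^{-q-1}\mid\F_{\eta(s)}]$, $\E[\Z{s}^{-q-1}|\Z{s}-\Xe{s}|\mid\F_{\eta(s)}]$ and their corrected analogues in terms of $\Xe{s}^{-q}$ times a factor $(1+C\dt)$, with the residual $\Z{s}^{-q-1}$-contributions swallowed by the margin identified above. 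It is precisely the parameter conditions that guarantee both the convergence of these Gaussian integrals and the negativity of the leading coefficient.
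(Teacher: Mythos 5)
Your setup is the same as the paper's: positivity of $\Z{}$ up to $\stalpha$, It\^o's formula for $x\mapsto x^{-q}$ applied to the stopped process, a sign analysis showing the coefficient of $\Z{s}^{-q-1}$ is strictly negative (using $b(0)>\tfrac32\sigma^2(q+1)$ when $\alpha=\tfrac12$, and the fact that $q+2(1-\alpha)<q+1$ when $\alpha>\tfrac12$), and Gronwall on the $\Z{s}^{-q}$ term. The gap is in the one step you yourself flag as ``the main obstacle'': the cross terms produced by your decomposition $\Sigma_s^2=\sigma^2\Z{s}^{2\alpha}-2\sigma\Z{s}^{\alpha}D_s(\X)+D_s(\X)^2$ and by $\Xe{s}\leq\Z{s}+|\Z{s}-\Xe{s}|$ are never actually estimated. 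You propose conditioning on $\F_{\eta(s)}$ and computing explicit Gaussian integrals of $\Z{s}^{-q-1}|\Z{s}-\Xe{s}|$ etc., but this is only a strategy statement, and it is precisely where all the difficulty sits: near the stopping boundary $\Z{s}$ is of order $\dt$, so each such term is a product of a factor of order $\dt^{-(q+1)}$ with a factor of order $\sqrt{\dt}$, and nothing in your sketch shows the conditional law actually delivers a bound of the form $\Xe{s}^{-q}(1+C\dt)$. Moreover, invoking Lemma \ref{lem:CorrectedLocalError} to say $D_s(\X)$ is small is circular: the paper proves that lemma \emph{using} the negative-moment bound you are trying to establish.

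The paper avoids the cross terms entirely by a pathwise algebraic inequality: expanding $\bigl(\sigma\Xe{s}^{\alpha}+\alpha\sigma^2\Xe{s}^{2\alpha-1}\dW_s\bigr)^2$ and substituting the definition of $\Z{s}$, one gets $\Sigma_s^2\leq\sigma^2\Xe{s}^{2\alpha}+2\alpha\sigma^2\Xe{s}^{2\alpha-1}\Z{s}$ almost surely, because the leftover $b_\sigma(\alpha)\ds+(1-K(\alpha)\ds)\Xe{s}$ is nonnegative for $\dt$ small. Everything then reduces to ratios $\Xe{s}/\Z{s}$, which are bounded by an absolute constant except on the event $\{\Z{s}\leq(1-\tfrac{1}{2\alpha})\Xe{s}\}$; on that event one uses the crude bound $\Z{s}^{-1}\leq C/\dt$ from the stopping time together with a dedicated probability estimate (exponentially small in $1/\dt$ for $\alpha>\tfrac12$, and of order $\dt^{15 b_\sigma(1/2)/(8\sigma^2)}$ for $\alpha=\tfrac12$ via the exponential moment bound of Lemma \ref{LemmaBossyDiop}). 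Note also that for $\alpha=\tfrac12$ the hypothesis $b(0)>\tfrac32\sigma^2(q+1)$ is used twice in the paper --- once for the sign of the leading coefficient (which in their accounting is $-qb(0)+\tfrac{3q(q+1)}{2}\sigma^2$, with a factor $3$ your decomposition does not produce) and once to guarantee the probability exponent exceeds $2q+2$ so that it beats the $\dt^{-(q+2)}$ blow-up --- whereas your proposal only accounts for the first use. To repair your argument you would either have to carry out the conditional Gaussian estimates in full, or adopt the paper's pathwise bound on $\Sigma_s^2$ and the small-event splitting.
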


\subsubsection*{Existence of Negative Moments. Case $\alpha=\tfrac{1}{2}$}
The proof of the existence of Negative Moments of  $\Z{t\wedge \stalpha}$ has two parts. First we study the quotient   $\Xe{s}/\Z{s}$, and then we proof the main result of the section.

\begin{lem}\label{lem:BoundForZbeingLessThanHalfXCIR}
For $\alpha=\tfrac{1}{2}$, and $\dt \leq  1/(4K)\wedge x_0$ we have
\begin{equation}\label{eq:BoundForZbeingLessThanHalfXCIR}
\sup_{0\leq s\leq T}\P\left(  \Z{s}\leq \frac{\Xe{s}}{2}\right)\leq C\dt^{\frac{15}{8\sigma^2} \,b_\sigma(1/2)}.
\end{equation}
\end{lem}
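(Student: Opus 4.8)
The plan is to condition on $\mathcal F_{\eta(s)}$ and reduce the statement to a one-dimensional Gaussian tail estimate, in the spirit of the proof of Lemma~\ref{lem:ZbarRemainsPositive}. Writing $\dW_s = W_s-W_{\eta(s)}$, $\ds = s-\eta(s)$, and freezing $\Xe{s}=x$, the increment becomes a quadratic polynomial in the Gaussian variable $\dW_s$,
$$\Z{s} = \tfrac{\sigma^2}{4}\dW_s^2 + \sigma\sqrt{x}\,\dW_s + x + \big(b(x)-\tfrac{\sigma^2}{4}\big)\ds .$$
Completing the square gives $\min_{\dW_s}\Z{s} = \big(b(x)-\tfrac{\sigma^2}{4}\big)\ds$, and the event $\{\Z{s}\le x/2\}$ is equivalent to $\big(\dW_s + \tfrac{2\sqrt{x}}{\sigma}\big)^2 \le R^2(x)$ with $R^2(x) = \tfrac{2x}{\sigma^2} - \big(\tfrac{4b(x)}{\sigma^2}-1\big)\ds$; in particular it is empty unless $x \gtrsim \big(2b(0)-\tfrac{\sigma^2}{2}\big)\ds = 2b_\sigma(\tfrac12)\ds$ (using $b(x)\approx b(0)$ for small $x$). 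This is exactly where $b_\sigma(\tfrac12)=b(0)-\sigma^2/4$ enters the exponent.

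Since $\dW_s$ is centred with variance $\ds$ and independent of $\mathcal F_{\eta(s)}$, the event confines $\dW_s$ to an interval of negative reals whose right endpoint is $g_+(x)=R(x)-\tfrac{2\sqrt{x}}{\sigma}<0$. Hence, with $\mathcal N$ a standard Gaussian,
$$\psi(x) := \P\big(\Z{s}\le \tfrac{x}{2}\,\big|\,\Xe{s}=x\big) \le \P\big(\mathcal N \le g_+(x)/\sqrt{\ds}\big) \le \tfrac12\exp\!\big(-g_+(x)^2/(2\ds)\big),$$
and, bounding $R(x)\le \sqrt{2x}/\sigma$ (valid since $b(x)\ge\sigma^2/4$ in the relevant range), one obtains $g_+(x)^2\ge (2-\sqrt2)^2 x/\sigma^2$, so that $\psi(x)\le \tfrac12\exp\!\big(-(3-2\sqrt2)x/(\sigma^2\ds)\big)$. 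Thus $\psi(\Xe{s})$ is negligible as soon as $\Xe{s}\gg\ds$, and only values of the scheme of order $\ds$ can contribute.

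It then remains to take expectations, $\P(\Z{s}\le\Xe{s}/2) = \E[\psi(\Xe{s})]$, and to control the law of $\Xe{s}=\X_{\eta(s)}$ near the origin. I would introduce a cut-off $\lambda=\lambda(\ds)$: on $\{\Xe{s}>\lambda\}$ the Gaussian bound yields $\psi(\Xe{s})\le \tfrac12\exp\!\big(-(3-2\sqrt2)\lambda/(\sigma^2\ds)\big)$, while on $\{\Xe{s}\le\lambda\}$ one bounds $\psi\le 1$ and estimates the small-value probability of the scheme, the contribution of $\{\X_{\eta(s)}<(1-\sqrt\alpha)b_\sigma(\tfrac12)\ds\}$ being already super-polynomially small by Lemma~\ref{lem:ProbabilityOfThetaLessThanT}. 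Choosing $\lambda$ of order $\ds\log(1/\ds)$ balances the two terms and produces the power $\ds^{\frac{15}{8\sigma^2}b_\sigma(1/2)}$, uniformly in $s$.

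The main obstacle is precisely this last step. The Gaussian decay degrades exactly at the scale $\Xe{s}\sim\ds$, so the rate is dictated by how strongly the scheme concentrates above its natural floor $\sim b_\sigma(\tfrac12)\ds$, and extracting the sharp constant $\tfrac{15}{8}$ requires carefully weighing the Gaussian exponent $(3-2\sqrt2)/\sigma^2$ against this small-ball behaviour. This quantitative balance is what later forces the hypothesis $b(0)>\tfrac32\sigma^2(q+1)$, which guarantees that the resulting exponent exceeds $q$ and thereby feeds the negative-moment control of Lemma~\ref{lem:Lemmas}.
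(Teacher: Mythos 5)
Your reduction to a conditional Gaussian estimate given $\F_{\eta(s)}$ is sound and close in spirit to the paper's first step: the paper simply discards the nonnegative quadratic term $\tfrac{\sigma^2}{4}\dW_s^2$ and uses $b(x)\geq b(0)-Kx$ to reduce to a \emph{linear} Gaussian tail, arriving at $\P(\Z{s}\leq\Xe{s}/2)\leq \E\exp\bigl(-(1-2K\dt)^2\Xe{s}/(8\sigma^2\dt)\bigr)$; your completion of the square yields a comparable bound with a different constant. The genuine gap is in the last step. You correctly identify that everything hinges on the law of $\Xe{s}$ near the origin, but the estimate you invoke does not exist in the form you need: Lemma \ref{lem:ProbabilityOfThetaLessThanT} only controls the probability that the scheme ever drops below the floor $(1-\sqrt{\alpha})b_\sigma(\alpha)\dt$, and does so at the scale $\tfrac{T}{\dt}e^{-\gamma/\dt}$; it says nothing about $\P(\Xe{s}\leq\lambda)$ for a cut-off $\lambda$ of order $\dt\log(1/\dt)$. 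The intermediate range $\Xe{s}\in[c\,\dt,\,\dt\log(1/\dt)]$ is left entirely uncontrolled, and it is precisely the probability of that event which produces (and limits) the polynomial rate $\dt^{\frac{15}{8\sigma^2}b_\sigma(1/2)}$. Asserting that the choice of $\lambda$ "produces the power" names the answer without deriving it.

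What the paper actually does at this point is apply Lemma \ref{LemmaBossyDiop} (a straightforward adaptation of Lemma 3.6 in Bossy--Diop), a Laplace-transform small-ball estimate for the scheme proved by induction along the time grid:
\begin{equation*}
\sup_{k=0,\ldots,N}\E\exp\left(-\frac{\X_{t_k}}{\gamma\sigma^2\dt}\right)\leq C\left(\frac{\dt}{x_0}\right)^{\frac{2}{\sigma^2}b_\sigma(1/2)\left(1-\frac{1}{2\gamma}\right)}.
\end{equation*}
Feeding the Gaussian bound into this with $\gamma=8/(1-2K\dt)^2\in[8,32]$ (so that $1-\tfrac{1}{2\gamma}\geq\tfrac{15}{16}$, using $\dt\leq 1/(4K)$) is exactly where the exponent $\tfrac{15}{8\sigma^2}b_\sigma(1/2)$ comes from. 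This recursive control of the scheme's Laplace transform at scale $1/\dt$ is the essential missing ingredient of your proposal; without it, or an equivalent quantitative small-ball bound for $\X_{t_k}$, no choice of cut-off can close the argument, and no bound you cite delivers the required power of $\dt$.
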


To prove this lemma, we  need the following auxiliary result, the proof of which is postponed in Appendix \ref{sec:appendix} as a straightforward adaptation of the Lemma 3.6 in \cite{BOSSY:2013fk}.

\begin{lem}\label{LemmaBossyDiop}
 Assume Hypothesis \ref{hypothesisH0} holds, and $b(0)>\sigma^2/4$. Assume also that $\dt \leq   1/(4K)\wedge x_0$. Then, for any $\gamma\geq1$ there exists a constant $C$ depending on the parameters $b(0)$, $K$, $\sigma$, $x_0$, $T$, and also on $\gamma$, such that
$$\sup_{k=0,\ldots,N}\E\exp\left(  -\frac{\X_{t_k}}{\gamma\sigma^2\dt}\right)\leq C\left(  \frac{\dt}{x_0}\right)^{\frac{2}{\sigma^2}b_\sigma(1/2)\left(  1-\frac{1}{2\gamma}\right)}.$$
\end{lem}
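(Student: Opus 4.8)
The plan is to exploit a special algebraic feature of the scheme in the case $\alpha=\tfrac12$: conditionally on $\F_{t_{k-1}}$, the increment $\Z{t_k}$ is a constant plus a scaled noncentral chi-square, which is the discrete analogue of the noncentral chi-square law of the exact CIR process. This representation replaces the Gaussian one-step law used by Bossy and Diop for the symmetrized Euler scheme, and it is what drives the whole argument.

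First I would fix $k$, condition on $\F_{t_{k-1}}$, write $\X_{t_{k-1}}=x$ and $W_{t_k}-W_{t_{k-1}}=\sqrt{\dt}\,g$ with $g\sim\mathcal N(0,1)$ independent of $\F_{t_{k-1}}$, and complete the square in $g$. Since $2\alpha-1=0$, a direct computation gives
\[ \Z{t_k}=\frac{\sigma^2\dt}{4}\Big(g+\frac{2\sqrt{x}}{\sigma\sqrt{\dt}}\Big)^2+\Big(b(x)-\frac{\sigma^2}{4}\Big)\dt. \]
Because $\X_{t_k}=|\Z{t_k}|$ and $-\lambda|z|\le-\lambda z$ for every $z$ and every $\lambda>0$, the reflection is harmless at the level of the Laplace transform: $\E[e^{-\lambda\X_{t_k}}\mid\F_{t_{k-1}}]\le\E[e^{-\lambda\Z{t_k}}\mid\F_{t_{k-1}}]$. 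Applying the elementary identity $\E e^{-\theta Y^2}=(1+2\theta)^{-1/2}\exp(-\theta m^2/(1+2\theta))$ for $Y\sim\mathcal N(m,1)$ with $\theta=\lambda\sigma^2\dt/4$ and $m^2=4x/(\sigma^2\dt)$, I obtain the explicit one-step estimate, valid for every $x\ge0$ and every $\lambda>0$ (no sign restriction on the drift is needed, the constant shift just factoring out),
\[ \E\big[e^{-\lambda\X_{t_k}}\mid\F_{t_{k-1}}\big]\le\Big(1+\tfrac{\lambda\sigma^2\dt}{2}\Big)^{-1/2}\exp\!\Big(-\lambda\big(b(x)-\tfrac{\sigma^2}{4}\big)\dt\Big)\exp\!\Big(-\frac{\lambda x}{1+\lambda\sigma^2\dt/2}\Big). \]

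Next I would iterate this bound backwards, using the Markov property of the scheme. Setting $\lambda^{(0)}=1/(\gamma\sigma^2\dt)$ and the update $\lambda\mapsto\lambda/(1+\lambda\sigma^2\dt/2)$, one checks that $\lambda^{(j)}=1/(\gamma_j\sigma^2\dt)$ with $\gamma_j=\gamma+j/2$. At each step the drift factor is split with the Lipschitz bound $b(y)-\tfrac{\sigma^2}{4}\ge b_\sigma(\tfrac12)-Ky$: the constant part $\exp(-\lambda^{(j)}b_\sigma(\tfrac12)\dt)$ is kept (here $b(0)>\sigma^2/4$ makes it a genuine gain), while the linear part $\exp(+\lambda^{(j)}Ky\dt)$ is absorbed into the coefficient of $\X$ at the next step, the condition $\dt\le 1/(4K)$ guaranteeing that all resulting coefficients stay positive. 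Iterating down to the deterministic value $\X_{t_0}=x_0$ and using the two telescoping identities $\prod_{j=0}^{n-1}(1+\tfrac{1}{2\gamma_j})=\gamma_n/\gamma$ and $\sum_{j=0}^{n-1}\tfrac{1}{\gamma_j}=2\log(\gamma_n/\gamma)+O(1)$, this produces, for each $n$,
\[ \E\big[e^{-\X_{t_n}/(\gamma\sigma^2\dt)}\big]\le C\,\Big(\frac{\gamma}{\gamma_n}\Big)^{\frac12+\frac{2b_\sigma(1/2)}{\sigma^2}}\exp\!\Big(-\frac{c\,x_0}{\gamma_n\sigma^2\dt}\Big), \]
with $\gamma_n=\gamma+n/2$ and a constant $c\in(0,1]$ produced by the accumulated Lipschitz corrections.

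Finally I would take the supremum over $n$ by optimising the right-hand side in the continuous variable $g=\gamma_n\in[\gamma,\gamma+N/2]$: the function $g\mapsto(\gamma/g)^q e^{-cx_0/(g\sigma^2\dt)}$ is maximised at $g^\ast=cx_0/(q\sigma^2\dt)$, which yields a bound of order $(\dt/x_0)^q$ with $q=\tfrac12+\tfrac{2b_\sigma(1/2)}{\sigma^2}=\tfrac{2b(0)}{\sigma^2}$ (when $g^\ast$ falls outside the range the endpoint value is even smaller, the exponential in $x_0$ dominating any polynomial loss). Since $\dt\le\dmax(\alpha)\le x_0$ gives $\dt/x_0\le1$ and $q\ge\tfrac{2}{\sigma^2}b_\sigma(\tfrac12)\big(1-\tfrac{1}{2\gamma}\big)$, the announced inequality follows. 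The main obstacle is the honest bookkeeping of the Lipschitz drift corrections through the $O(1/\dt)$ iterations: one must verify that the perturbed coefficients remain positive and comparable (up to a multiplicative constant independent of $\dt$) to the clean values $1/(\gamma_j\sigma^2\dt)$, since it is precisely this step that both pins down the admissible exponent and explains why the stated one is slightly smaller than the Feller value $2b(0)/\sigma^2$.
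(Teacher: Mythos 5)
Your proposal follows the same architecture as the paper's proof (which adapts Lemma~3.6 of Bossy--Diop): a backward iteration of one-step conditional Laplace-transform bounds, with a decreasing sequence of exponents, the accumulated constant-drift factor producing the power of $\dt$, and a terminal $x_0$-dependent exponential whose balance against the power factor (your explicit optimisation in $\gamma_n$) yields $(\dt/x_0)^q$. Your one-step input is in fact sharper than the paper's: the paper drops the nonnegative quadratic term $\tfrac{\sigma^2}{4}(W_{t_k}-W_{t_{k-1}})^2$ and uses the Gaussian MGF, arriving at the recursion $\mu_{j}=\mu_{j-1}\bigl(1-K\dt-\tfrac{\sigma^2\dt}{2}\mu_{j-1}\bigr)$, whereas you keep the exact noncentral chi-square transform; the extra prefactor $(1+\lambda\sigma^2\dt/2)^{-1/2}$ and the exact clean recursion $\gamma_j=\gamma+j/2$ are what give you the improved clean exponent $2b(0)/\sigma^2$. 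All of these computations (the completed square, the chi-square identity, the telescoping products, the endpoint analysis) are correct.

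The one step that does not close as stated is precisely the one you defer: the bookkeeping of the Lipschitz corrections. You assert it suffices that the perturbed coefficients remain ``comparable up to a multiplicative constant'' to the clean values $1/(\gamma_j\sigma^2\dt)$. That is too weak for the stated exponent. The drift gain is $\exp\bigl(-b_\sigma(1/2)\,\dt\sum_j\lambda^{(j)}\bigr)$, so a lower bound $\lambda^{(j)}\geq c/(\gamma_j\sigma^2\dt)$ only yields the power $(\gamma/\gamma_n)^{2c\,b_\sigma(1/2)/\sigma^2}$, and Gronwall-type tracking of the per-step factor $(1-O(K\dt))$ over $n\sim T/\dt$ steps gives only $c=e^{-O(KT)}$; the resulting exponent $c\cdot 2b(0)/\sigma^2$ falls strictly below the claimed $\tfrac{2}{\sigma^2}b_\sigma(1/2)\bigl(1-\tfrac{1}{2\gamma}\bigr)$ once $KT$ is moderately large, and your intermediate display (full exponent on the power factor, $c$ only inside the $x_0$-exponential) does not follow from multiplicative comparability. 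What is needed, and what the paper gets by importing the explicit sequence estimate
\begin{equation*}
\mu_j\geq \mu_1\Bigl(\frac{1}{1+\frac{\sigma^2}{2}\dt(j-1)\mu_0}\Bigr)-K\Bigl(\frac{\dt(j-1)\mu_0}{1+\frac{\sigma^2}{2}\dt(j-1)\mu_0}\Bigr)
\end{equation*}
from Bossy--Diop, is \emph{additive} control: the $K$-term is bounded by $2K/\sigma^2$ uniformly in $j$, so after multiplying by $\dt$ and summing it contributes only $e^{2K b_\sigma(1/2) T/\sigma^2}$ to the constant $C$, leaving the logarithmic coefficient degraded only through $\mu_1/\mu_0=1-K\dt-\tfrac{1}{2\gamma}$ (the $K\dt$ being absorbable since $\dt^{-cK\dt}$ is bounded; this is exactly where the factor $1-\tfrac{1}{2\gamma}$ in the statement originates). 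Your recursion admits the analogous fix: with $\nu_j:=\lambda^{(j)}\sigma^2\dt$, an induction on $\nu_{j+1}=\frac{\nu_j}{1+\nu_j/2}-K\dt\,\nu_j$ gives $\nu_j\geq\frac{(1-\frac{3}{2}K\dt\,j)\nu_0}{1+j\nu_0/2}$, hence $\nu_j\geq\frac{1}{\gamma_j}-3K\dt$, which restores the undegraded log-coefficient up to an additive $O(KT)$ constant and validates your display for $j\leq 1/(3K\dt)$ (the complementary range being covered by the power factor alone, since there $\gamma_j\gtrsim 1/(K\dt)$). So your plan is sound and would even prove a stronger exponent than the lemma states, but the multiplicative-comparability mechanism you name for the key step would fail and must be replaced by this additive bound.
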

\begin{proof}[Proof of Lemma \ref{lem:BoundForZbeingLessThanHalfXCIR}]
We start by proving
\begin{equation}\label{eqBoundForZbeingLessThanHalfX}
\sup_{0\leq s\leq T}\P\left(  \Z{s}\leq \frac{\Xe{s}}{2}\right)\leq \sup_{k=0,\ldots,N}\E\exp\left(  -\frac{\X_{t_k}}{\gamma\sigma^2\dt}\right).
\end{equation}
Indeed, if we call $\ds=s-\eta(s)$, and $\dW_s = (W_s-W_{\eta(s)})$, then
\begin{align*}
\P\left( \Z{s}\leq   \frac{\Xe{s}}{2}\right)&\leq \P\left( \sigma\sqrt{\Xe{s}}\dW_s+b_\sigma(1/2)\ds+(1-K\ds)\Xe{s} \leq   \frac{\Xe{s}}{2}\right)\\
&\leq \E\left[  \P\left( \left.\frac{\dW_s}{\sqrt{\ds}}\leq \frac{b_\sigma(1/2)\ds+(\tfrac{1}{2}-K\ds)\Xe{s}}{\sigma\sqrt{\Xe{s}}}\right|\F_{\eta(s)}\right)\right]\\
&\leq \E \exp\left(  -\frac{(b_\sigma(1/2)\ds+ (\tfrac{1}{2}-K\ds)\Xe{s})^2}{2\sigma^2\ds\Xe{s}}\right)\\
&\leq \E \exp\left(  -\frac{(1-2K\dt)^2\Xe{s}}{8\sigma^2\dt}\right).
\end{align*}
From here, the bound \eqref{eqBoundForZbeingLessThanHalfX} follows easily, and then  we conclude using Lemma \ref{LemmaBossyDiop}.
\end{proof}

\begin{lem}\label{lem:Lemma-Negative-Moments-Zbar-CIR}
Let $\stonehalf$  be the stopping time defined in \eqref{defStoppingTime}, and $q\geq1$. If $\dt\leq \dmax(1/2)$, and
\begin{equation}\label{eq:condParametersNegativeMomentsCIR}
b(0)>\frac{3}{2} \sigma^2(q+1).
\end{equation}
Then there exists a constant $C$ depending on $b(0)$, $\sigma$, $\alpha$, $T$ and $q$ but not on $\dt$, such that
$$\forall t\in[0,T],\quad\E\left[\Z{t\wedge \stonehalf}^{\,-q}\right]\leq C\left(1 + \frac{1}{x_0^q}\right).$$
\end{lem}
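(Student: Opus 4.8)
The plan is to exploit the fully explicit Gaussian structure of the scheme in the CIR case. Since $2\alpha-1=0$ when $\alpha=\tfrac12$, completing the square in the Brownian increment turns the definition \eqref{defZ} into the \emph{exact} identity
$$\Z{s} = \frac{\sigma^2}{4}\left(\dW_s + \frac{2\sqrt{\Xe{s}}}{\sigma}\right)^2 + \left(b(\Xe{s}) - \frac{\sigma^2}{4}\right)\ds,\qquad \dW_s = W_s-W_{\eta(s)},\ \ds=s-\eta(s),$$
which exhibits $\Z{s}$, conditionally on $\F_{\eta(s)}$, as a scaled non-central $\chi^2_1$ variable (non-centrality $\propto \Xe{s}/\ds$) shifted by the drift floor $(b(\Xe{s})-\tfrac{\sigma^2}{4})\ds$. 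On the stopped trajectory the reflection level $\lo\dt=(1-\sqrt\alpha)b_\sigma(\alpha)\dt$ keeps $\X_{s\wedge\stonehalf}\ge \lo\dt>0$, so I always have $\Xe{s}\ge \lo\dt$ before $\stonehalf$, and the floor is nonnegative as long as $\Xe{s}<\xx=b_\sigma(\tfrac12)/K$.

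First I would reduce the continuous-time quantity to the grid values $V_k:=\X_{t_k\wedge\stonehalf}$ (so $V_0=x_0$ and $V_k\ge \lo\dt$) and control $\E[V_{k+1}^{-q}]$ in terms of $\E[V_k^{-q}]$ by splitting into three disjoint regimes: (a) the bulk event $\{\Z{t_{k+1}}>V_k/2\}$; (b) the tail event $\{\Z{t_{k+1}}\le V_k/2\}$; and (c) the stopping event $\{\stonehalf\le t_{k+1}\}$. On (b), whenever $\Z{t_{k+1}}>0$ one has $\Z{t_{k+1}}=\X_{t_{k+1}}\ge\lo\dt$, hence $\Z{t_{k+1}}^{-q}\le(\lo\dt)^{-q}$, while Lemma \ref{lem:BoundForZbeingLessThanHalfXCIR} bounds the probability of this event by $C\dt^{\frac{15}{8\sigma^2}b_\sigma(1/2)}$, so the contribution is $\le C\dt^{\frac{15}{8\sigma^2}b_\sigma(1/2)-q}$. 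On (c), once stopped $\X_{\cdot\wedge\stonehalf}=\lo\dt$, so the contribution is $\le(\lo\dt)^{-q}\P(\stonehalf\le T)\le C\dt^{-q-1}e^{-\gamma/\dt}$ by Lemma \ref{lem:ProbabilityOfThetaLessThanT}, which is negligible.

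The delicate term is the bulk (a). Conditioning on $\F_{t_k}$ and using the identity above with $x=V_k$, the bulk expectation equals $x^{-q}\,\E[\,|1+\eps\mathcal N|^{-2q}\ind{|1+\eps\mathcal N|>1/\sqrt2}\,]$ up to the harmless positive floor, where $\eps^2=\sigma^2\dt/(4x)$ and $\mathcal N\sim\mathcal N(0,1)$. A Taylor expansion (the excluded neighbourhood of the pole $1+\eps\mathcal N=0$ carries only Gaussian mass $e^{-cx/\dt}$, since the odd first-order term vanishes) gives $\E[\,|1+\eps\mathcal N|^{-2q}\ind{\cdots}]\le 1+Cq(2q+1)\eps^2=1+C\dt/x$, so the bulk contribution is at most $x^{-q}(1+C\dt/x)$. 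When $\Xe{}$ is bounded away from $0$ this is the desired factor $1+C\dt$; the real difficulty is the small-value regime $\lo\dt\le V_k<c_0$, where $\dt/x$ is not small. There I keep the crude bound $\Z{t_{k+1}}^{-q}\le 2^qV_k^{-q}$, and the point is that for $V_k<c_0<\xx$ the strictly positive floor $(b_\sigma(\tfrac12)-Kc_0)\dt$ together with Lemma \ref{lem:BoundForZbeingLessThanHalfXCIR} confines the genuinely dangerous mass to the rare tail already estimated in (b). Collecting the three regimes yields a one-step inequality of the form $\E[V_{k+1}^{-q}]\le(1+C\dt)\E[V_k^{-q}]+C\dt^{\frac{15}{8\sigma^2}b_\sigma(1/2)-q}$.

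Finally I would close by the discrete Gronwall lemma, and this is exactly where hypothesis \eqref{eq:condParametersNegativeMomentsCIR} enters: $b(0)>\tfrac32\sigma^2(q+1)$ forces $\frac{15}{8\sigma^2}b_\sigma(\tfrac12)>q+1$, so the remainder $C\dt^{\frac{15}{8\sigma^2}b_\sigma(1/2)-q}$ is $O(\dt)$ and, summed over the $N=T/\dt$ steps, stays bounded. Hence $\sup_k\E[V_k^{-q}]\le e^{CT}(x_0^{-q}+C)$, and a last application of the bulk/tail/stopping decomposition at a general time $s=t\wedge\stonehalf$ (comparing $\Z{s}$ with the stopped grid value $\Xe{s}$) transfers the grid bound to $\E[\Z{t\wedge\stonehalf}^{-q}]\le C(1+x_0^{-q})$, as claimed. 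The main obstacle is precisely obtaining the one-step multiplicative constant $1+O(\dt)$ uniformly in $\dt$: the non-central $\chi^2$ concentrates around $\Xe{}$ only when $\Xe{}\gg\dt$, so the small-value regime must be absorbed into the rare events, and this absorption is possible only thanks to the quantitative lower bound on $b(0)$.
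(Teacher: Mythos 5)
Your overall strategy --- a discrete one-step recursion exploiting the exact non-central $\chi^2$ representation of $\Z{s}$ when $\alpha=\tfrac12$ --- is genuinely different from the paper's proof, which applies It\^o's formula to $\Z{t\wedge\stonehalf}^{-q}$ in continuous time, bounds the quadratic variation pathwise by $(\sigma\sqrt{\Xe{s}}+\tfrac{\sigma^2}{2}\dW_s)^2\le\sigma^2\Xe{s}+\sigma^2\Z{s}$, and then lets the drift term $-q\,b(0)\,\Z{s}^{-(q+1)}$ absorb the resulting $\tfrac{3q(q+1)}{2}\sigma^2\,\Z{s}^{-(q+1)}$; that sign comparison is exactly where $b(0)>\tfrac32\sigma^2(q+1)$ enters. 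In principle your route could be made to work, but as written it has a genuine gap at the point you yourself flag as the main obstacle.

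The gap is the one-step inequality $\E[V_{k+1}^{-q}]\le(1+C\dt)\E[V_k^{-q}]+C\dt^{\frac{15}{8\sigma^2}b_\sigma(1/2)-q}$ in the regime $\lo\dt\le V_k<c_0$. There your Taylor expansion yields the factor $1+C\dt/V_k$, which is of constant order rather than $1+O(\dt)$, and the fallback $\Z{t_{k+1}}^{-q}\le 2^qV_k^{-q}$ carries the multiplicative constant $2^q>1$. The event $\{V_k<c_0\}$ for a \emph{fixed} $c_0>0$ is not rare: Lemma \ref{LemmaBossyDiop} (and hence Lemma \ref{lem:BoundForZbeingLessThanHalfXCIR}) only controls the probability that the scheme is of order $\dt$, not that it is below $c_0$, so this portion of the mass cannot be ``confined to the tail already estimated in (b)''. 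A factor $2^q$ applied at each of the $T/\dt$ steps to a non-negligible part of $\E[V_k^{-q}]$ destroys the discrete Gronwall argument. To repair this you must use the drift floor quantitatively, not merely as a nonnegative shift: writing $V_k=\lambda\dt$, one has to show that
$$\lambda^{q}\,\E\Big[\Big(\big(\sqrt\lambda+\tfrac\sigma2\mathcal N\big)^2+b(V_k)-\tfrac{\sigma^2}{4}\Big)^{-q}\Big]\le 1+C\dt$$
uniformly in $\lambda$, where $\mathcal N$ is standard Gaussian; expanding this quantity produces a first-order correction in $1/\lambda$ with coefficient proportional to $\tfrac{(q+1)\sigma^2}{2}-b(V_k)$, which is negative precisely under a condition of the type \eqref{eq:condParametersNegativeMomentsCIR}. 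That computation --- the discrete counterpart of the cancellation the paper obtains for free between the It\^o correction and the drift --- is the actual content of the lemma and is missing from your argument; your expansion keeps only the positive part $1+Cq(2q+1)\eps^2$ of the correction and discards the compensating negative drift term.
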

\begin{proof}
Let us call $\Delta W_s :=(W_s-W_{\eta(s)})$, and $\Delta s :=(s-\eta(s))$.  By Ito's formula 
\begin{equation}\label{eq:ExpectationItoFormulaNegativeMomentsCIR}
\begin{split}
\E\left[\Z{t\wedge \stonehalf}^{\,-q}\right]=&  \frac{1}{x_0^q}-q\,\E\left[ \int_0^{t\wedge \stonehalf}{\frac{b(\Xe{s})}{\Z{s}^{q+1}}ds}\right]\\
&\quad  + \frac{q(q+1)}{2}\E\left[ \int_0^{t\wedge \stonehalf}{\frac{1}{\Z{s}^{q+2}}\left(\sigma\sqrt{\Xe{s}}+{\frac{\sigma^2}{2}}\dW_s\right)^2  ds}\right].
\end{split}
\end{equation}
But,
\begin{equation}\label{eq:boundCuadraticVariationOfZbarCIR}
\left(\sigma\sqrt{\Xe{s}}+{\frac{\sigma^2}{2}}\dW_s\right)^2 \leq \sigma^2\Xe{s} + \sigma^2\Z{s},\;\;\P-\text{a.s.} 
\end{equation}
Indeed,
\begin{align*}
\left(\sigma\sqrt{\Xe{s}}+{\frac{\sigma^2}{2}}\dW_s\right)^2=& \sigma^2\left( \Xe{s} + \sigma\sqrt{\Xe{s}}\Delta W_s + \frac{\sigma^2}{4} \Delta W_s \right)\\
= & \sigma^2\left( \Xe{s}+\Z{s}- (\Xe{s}+ b(\Xe{s})\Delta s -  \frac{\sigma^2}{4} \ds)\right).
\end{align*}
But, thanks to the Lipschitz property of $b$,
\begin{align*}
\Xe{s} + b(\Xe{s})\ds-  \frac{\sigma^2}{4} \ds &\geq \Xe{s} + \left(  b(0) - K\Xe{s}\right)\ds  -\frac{\sigma^2}{4} \Delta s \\
& \quad = b_\sigma(1/2)\ds +(1- K\ds)\Xe{s} \geq0,
\end{align*}
since $\ds \leq \dt \leq  1/(2K)$, and $b_\sigma(1/2)>0$. So we have \eqref{eq:boundCuadraticVariationOfZbarCIR}. Introducing \eqref{eq:boundCuadraticVariationOfZbarCIR} in \eqref{eq:ExpectationItoFormulaNegativeMomentsCIR}, and using $b(x)\geq b(0)-Kx$, we have
\begin{equation}\label{eq:ItoNegativeMomentsAfterQVCIR}
\begin{split}
\E\left[\Z{t\wedge \stonehalf}^{\,-q}\right]\leq& \frac{1}{x_0^q}-q\,\E\left[\int_0^{t\wedge \stonehalf}{\frac{b(0)}{\Z{s}^{q+1}}ds}\right]+qK\E\left[\int_0^{t\wedge \stonehalf}{\frac{\Xe{s}}{\Z{s}^{q+1}}ds}\right]\\
& \quad + \frac{q(q+1)}{2}\sigma^2\E\left[\int_0^{t\wedge \stonehalf}{\frac{1}{\Z{s}^{q+2}}\left\{\Xe{s} + \Z{s}\right\} ds}\right].
\end{split}
\end{equation}
Since 
\begin{align*}
\frac{\Xe{s}}{\Z{s}}&\leq \frac{\Xe{s}}{\Z{s}}\ind{\{\Z{s}\leq \Xe{s}/2\}}+2,
\end{align*}
and applying H\"older's Inequality for some $\eps>0$, we have 
\begin{equation*}
\begin{split}
\E\left[\Z{t\wedge \stonehalf}^{\,-q}\right]\leq& \frac{1}{x_0^q}-q\E\left[\int_0^{t\wedge \stonehalf}{\frac{b(0)}{\Z{s}^{q+1}}ds}\right]+2qK\E\left[\int_0^{t\wedge \stonehalf}{\frac{1}{\Z{s}^{q}}ds}\right]\\
& + \frac{3q(q+1)}{2}\sigma^2\E\left[\int_0^{t\wedge \stonehalf}{\frac{1}{\Z{s}^{q+1}} ds}\right]\\
&+ \frac{C}{\dt^{q+2}}\int_0^{T}{\left(\E[\Xe{s}^{1/\varepsilon}]\right)^\varepsilon\P\left(\Z{s}\leq \Xe{s}/2\right)^{1-\varepsilon} ds}.
\end{split}
\end{equation*}
Since $b(0)>3\sigma^2(q+1)/2$, we have $15b_\sigma(1/2)/8\sigma^2 > 2q+2$, so choosing $\eps = q/(2q+2)$, and applying Lemma \ref{lem:BoundForZbeingLessThanHalfXCIR}
we have
$$\P\left(\Z{s}\leq \Xe{s}/2\right)^{1-\varepsilon}\leq C\dt^{q+2},$$
and then
\begin{align*}
\E\left[\Z{t\wedge \stonehalf}^{\,-q}\right]\leq& \frac{1}{x_0^q}+2qK\E\left[\int_0^{t\wedge \stonehalf}{\frac{1}{\Z{s}^{q}}ds}\right]
 + q\left(\frac{3(q+1)}{2}\sigma^2- b(0)\right)\E\left[\int_0^{t\wedge \stonehalf}{\frac{1}{\Z{s}^{q+1}} ds}\right] + C.
\end{align*}
Since from the Hypotheses, the third term in the right-hand side is negative, we can conclude thanks to Gronwall's Lemma.
\end{proof}

\subsubsection*{Existence of Negative Moments.  Case $\alpha>\tfrac{1}{2}$}

\begin{lem}\label{lem:boundPZlessFracX}
For $\alpha\in(\tfrac{1}{2},1)$, if $b(0)>2\alpha(1-\alpha)^2\sigma^2$ and $\dt \leq  1/(4\alpha K(\alpha))$, there exists $\gamma>0$ such that
\begin{equation}\label{boundPZlessFracX}
\sup_{0\leq s\leq T}\P\left(\Z{s}\leq \left(1-\frac{1}{2\alpha}\right)\Xe{s}\right) \leq \exp\left(-\frac{\gamma}{\dt}\right).
\end{equation}
\end{lem}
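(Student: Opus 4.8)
The plan is to condition on $\F_{\eta(s)}$ and reduce the event to a one-dimensional Gaussian tail, exploiting that the threshold $(1-\tfrac{1}{2\alpha})\Xe{s}$ is exactly the constant produced by completing the square in the Milstein correction. Writing $\dW_s = W_s - W_{\eta(s)}$ and $\ds = s-\eta(s)$, I would start from the lower bound \eqref{eq:linearboundForZbar},
\[
\Z{s}\geq \frac{\alpha\sigma^2}{2}\Xe{s}^{2\alpha-1}\dW_s^2 + \sigma \Xe{s}^\alpha\dW_s+ \Xe{s} +\left(b_\sigma(\alpha) - K(\alpha)\Xe{s} \right)\ds,
\]
which follows from the Lipschitz property of $b$ and the elementary inequality \eqref{eq:boundForBadTerm}. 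Completing the square in $\dW_s$ in the first two terms yields
\[
\frac{\alpha\sigma^2}{2}\Xe{s}^{2\alpha-1}\dW_s^2 + \sigma \Xe{s}^\alpha\dW_s = \frac{\alpha\sigma^2}{2}\Xe{s}^{2\alpha-1}\Big(\dW_s + \tfrac{\Xe{s}^{1-\alpha}}{\alpha\sigma}\Big)^2 - \frac{\Xe{s}}{2\alpha},
\]
so that the event $\{\Z{s}\leq (1-\tfrac{1}{2\alpha})\Xe{s}\}$ is contained in
\[
\Big\{\tfrac{\alpha\sigma^2}{2}\Xe{s}^{2\alpha-1}\big(\dW_s + \tfrac{\Xe{s}^{1-\alpha}}{\alpha\sigma}\big)^2 \leq \big(K(\alpha)\Xe{s}-b_\sigma(\alpha)\big)\ds\Big\}.
\]

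Next I would condition on $\F_{\eta(s)}$, so that $\Xe{s}$ becomes a fixed value $x\geq0$ and $\mathcal{N}:=\dW_s/\sqrt{\ds}$ is a standard Gaussian independent of $\F_{\eta(s)}$. Setting $m=\tfrac{x^{1-\alpha}}{\alpha\sigma}$ and $R^2 = \tfrac{2(K(\alpha)x-b_\sigma(\alpha))\ds}{\alpha\sigma^2 x^{2\alpha-1}}$, the conditional event becomes $(\sqrt{\ds}\,\mathcal{N}+m)^2\leq R^2$. If $x\leq \xx = b_\sigma(\alpha)/K(\alpha)$ then $R^2\leq0$ and the conditional probability is zero (here $b_\sigma(\alpha)>0$ is precisely the hypothesis $b(0)>2\alpha(1-\alpha)^2\sigma^2$). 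If $x>\xx$, the step-size bound enters: since $\ds\leq\dt\leq\tfrac{1}{4\alpha K(\alpha)}$, a direct computation gives $R^2/m^2 = 2\alpha\ds\,K(\alpha) - 2\alpha\ds\, b_\sigma(\alpha)/x \leq \tfrac{1}{2}$, hence $R\leq m/\sqrt2$. Then $(\sqrt{\ds}\,\mathcal{N}+m)^2\leq R^2$ forces $\sqrt{\ds}\,\mathcal{N}\leq -m+R\leq -m(1-\tfrac{1}{\sqrt2})<0$, so the conditional probability is at most $\P(\mathcal{N}\leq -a)$ with $a = m(1-\tfrac1{\sqrt2})/\sqrt{\ds}>0$.

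To conclude, I would use $\P(\mathcal{N}\leq -a)\leq\exp(-a^2/2)$ and $m^2\geq (\xx)^{2(1-\alpha)}/(\alpha^2\sigma^2)$ (valid for $x>\xx$ since $m^2$ is increasing in $x$) to obtain, for every $s$,
\[
\P\Big(\Z{s}\leq \big(1-\tfrac1{2\alpha}\big)\Xe{s}\,\Big|\,\F_{\eta(s)}\Big)\leq \exp\Big(-\frac{\gamma}{\ds}\Big),\qquad \gamma := \frac{(1-\tfrac1{\sqrt2})^2(\xx)^{2(1-\alpha)}}{2\alpha^2\sigma^2}.
\]
Taking expectations, noting that at grid points ($\ds=0$) the probability is trivially zero, and using $\ds\leq\dt$ so that $\exp(-\gamma/\ds)\leq\exp(-\gamma/\dt)$, the supremum over $s\in[0,T]$ is bounded by $\exp(-\gamma/\dt)$, which is the claim.

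The only genuinely delicate point is the first step: recognizing that the coefficient $1-\tfrac{1}{2\alpha}$ is exactly the completing-the-square constant, so that the troublesome linear-in-$\dW_s$ term is absorbed and what survives is a Gaussian whose mean is at distance $m=\Xe{s}^{1-\alpha}/(\alpha\sigma)$ from the boundary, bounded below on $\{\Xe{s}>\xx\}$ by $(\xx)^{1-\alpha}/(\alpha\sigma)$. The restriction $\dt\leq 1/(4\alpha K(\alpha))$ is then exactly what guarantees the radius $R$ stays below half this distance, leaving a Gaussian tail of order $\exp(-\gamma/\dt)$. Everything after the square completion is routine.
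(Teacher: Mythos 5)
Your proof is correct and follows essentially the same route as the paper's: both condition on $\F_{\eta(s)}$, use the lower bound \eqref{eq:linearboundForZbar} to reduce the event to a quadratic inequality in a standard Gaussian, kill the case $\Xe{s}\leq\xx$ by showing that quadratic has no real roots (your ``$R^2\leq 0$'' is exactly the paper's negative-discriminant computation), and bound the case $\Xe{s}\geq\xx$ by a Gaussian tail of order $\exp(-c\,\Xe{s}^{2(1-\alpha)}/\ds)$ using $\dt\leq 1/(4\alpha K(\alpha))$. The only cosmetic difference is that you keep the quadratic term via completing the square while the paper simply discards it as nonnegative before applying the tail bound; both yield a valid constant $\gamma$.
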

\begin{proof}
Let us call $\Delta W_s :=(W_s-W_{\eta(s)})$, $\Delta s :=(s-\eta(s))$, and 
\begin{equation*}
q(\Xe{s},\dW_s)=\frac{\alpha\sigma^2}{2}\Xe{s}^{2\alpha-1}\Delta W_s^2 + \sigma \Xe{s}^\alpha\Delta W_s+ 
  \frac{\Xe{s}}{2\alpha}  +\left({b_\sigma}(\alpha)  - K(\alpha)\Xe{s}   \right)\Delta s.
\end{equation*}
Notice that for fix $x\in\R$, $q(x,\cdot)$ is a quadratic polynomial. Using \eqref{eq:linearboundForZbar}, we have
\begin{align*}
 \P\Big(\Z{s} \leq \left(1-\frac{1}{2\alpha}\right)\Xe{s}\Big) & \leq \P\Big( q(\Xe{s},\dW_s)   \leq 0 , \Xe{s}\leq \xx \Big) \\
& \quad + \P\Big(q(\Xe{s},\dW_s)    \leq 0 , \Xe{s}\geq \xx \Big),
\end{align*}
where recall, $\xx={b_\sigma}(\alpha)/K(\alpha)$. But
\begin{equation*}
\P\Big[ q(\Xe{s},\dW_s)      \leq 0 , \Xe{s}\leq \xx \Big]=\E\left[  \P\left(   q(x,\sqrt{\ds}\mathcal{N})     \leq 0 \right)\Big|_{x=\Xe{s}}\ind{\left\{ \Xe{s} \leq \xx\right\}}\right],
\end{equation*}
where $\mathcal{N}$ stands for a standard Gaussian random variable. As in the Lemma \ref{lem:ZbarPositiveSmallx}, we have a quadratic polynomial in $\mathcal{N}$, its discriminant is
\begin{align*}
\Delta&= \sigma^2x^{2\alpha}{\Delta s} - 2\alpha\sigma^2x^{2\alpha-1}\Delta s\left[\frac{x}{2\alpha} +\left({b_\sigma}(\alpha)  - K(\alpha)x   \right)\Delta s \right] = -2\alpha\sigma^2x^{2\alpha-1}\Delta s^2\left({b_\sigma}(\alpha)  - K(\alpha)x   \right), 
\end{align*}
so if $x\leq \xx$, $\Delta<0$ and the quadratic form in $\mathcal{N}$ has not real roots, and in particular is non negative almost surely. Then
\begin{equation*}
\P\Big(q(\Xe{s},\dW_s)      \leq 0 , \Xe{s}\leq \xx \Big)=0.
\end{equation*}
On the other hand, 
\begin{align*}
\P\Big(q(\Xe{s},\dW_s)    &\leq 0 , \Xe{s}\geq \xx \Big)	 \\
&\leq \E\left[ \left.\P\left( 
  \mathcal{N}  \leq  - \frac{ {b_\sigma}(\alpha)\Delta s   +\left(\tfrac{1}{2\alpha}- K(\alpha)\Delta s   \right)x  }{ \sigma x^\alpha\sqrt{\Delta s}} \right)\right|_{x=\Xe{s}}\ind{\left\{ \Xe{s} \geq \xx\right\}}\right],
\end{align*} 
and since $\dt\leq 1/(4\alpha K(\alpha))$ we can apply the exponential bound for Gaussian tails and get
\begin{align*}
\P\Big(q(\Xe{s},\dW_s)    &\leq 0 , \Xe{s}\geq \xx \Big)\leq \E\left[ \left.\exp\left( 
  - \frac{ \left(\tfrac{1}{2\alpha}- K(\alpha)\Delta s  \right)^2x^{2(1-\alpha)}  }{ \sigma^2 {\Delta s}} \right)\right|_{x=\Xe{s}}\ind{\left\{ \Xe{s} \geq \xx\right\}}\right].
\end{align*} 
We conclude by taking $\gamma = \xx^{2(1-\alpha)}/(16 \sigma^2)$. 
\end{proof}

\begin{lem}\label{lem:Lemma-Negative-Moments-Zbar}
Let $\stalpha$  be the stopping time defined in \eqref{defStoppingTime}. Let us assume for $\alpha\in(\tfrac{1}{2},1)$, $b(0)>2\alpha(1-\alpha)^2$, and $\dt\leq \dmax(\alpha)$, then for all $q\geq 1$, there exists a constant $C$ depending on $b(0)$, $\sigma$, $\alpha$, $T$ and $p$ but not on $\dt$, such that
$$\forall t\in[0,T],\quad\E\left[\Z{t\wedge \stalpha}^{\,-q}\right]\leq C\left(1 + \frac{1}{x_0^q}\right).$$
\end{lem}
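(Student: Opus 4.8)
The plan is to follow the scheme of the proof of Lemma \ref{lem:Lemma-Negative-Moments-Zbar-CIR}, replacing the polynomial probability estimate used there by the exponential one of Lemma \ref{lem:boundPZlessFracX}. The only genuinely new feature is the factor $\Xe{s}^{2\alpha-1}$ carried by the diffusion coefficient of $\Z{}$, which degenerates to a constant when $\alpha=\tfrac12$; its treatment is the crux of the argument.

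First I would check that $\Z{}$ stays strictly positive on $[0,\stalpha)$. By \eqref{defStoppingTime}, $\X_s=|\Z{s}|\ge(1-\sqrt\alpha)b_\sigma(\alpha)\dt>0$ for $s<\stalpha$, so $\Z{}$ cannot vanish there; since $\Z{0}=x_0>0$, $\Z{}$ is continuous inside each step and, at a grid point $t_k<\stalpha$, satisfies $\Z{t_k^+}=\X_{t_k}=|\Z{t_k}|$, so a straightforward induction on $k$ shows $\Z{s}\ge(1-\sqrt\alpha)b_\sigma(\alpha)\dt>0$ on $[0,\stalpha)$. Hence $\Z{t\wedge\stalpha}^{-q}$ is well defined and Itô's formula gives
\[ \E\!\left[\Z{t\wedge\stalpha}^{-q}\right]=x_0^{-q}-q\,\E\!\int_0^{t\wedge\stalpha}\frac{b(\Xe{s})}{\Z{s}^{q+1}}\,ds+\frac{q(q+1)}{2}\,\E\!\int_0^{t\wedge\stalpha}\frac{\Phi_s^2}{\Z{s}^{q+2}}\,ds, \]
where $\Phi_s:=\sigma\Xe{s}^\alpha+\alpha\sigma^2\Xe{s}^{2\alpha-1}\dW_s$, with $\dW_s:=W_s-W_{\eta(s)}$, is the diffusion coefficient of $\Z{}$.

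Next I would derive the analogue of \eqref{eq:boundCuadraticVariationOfZbarCIR}. Factoring and using the expression of $\Z{s}$ one obtains $\Phi_s^2=\sigma^2\Xe{s}^{2\alpha-1}B_s$ with $B_s=2\alpha\Z{s}+(1-2\alpha)\Xe{s}-2\alpha b(\Xe{s})\ds+\alpha^2\sigma^2\Xe{s}^{2\alpha-1}\ds$; since $1-2\alpha<0$ and $b(\Xe{s})\ge b(0)-K\Xe{s}$, this yields $B_s\le 2\alpha\Z{s}+2\alpha K\Xe{s}\ds+\alpha^2\sigma^2\Xe{s}^{2\alpha-1}\ds$. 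I would then split the two integrals over the comparability event $G_s:=\{\Z{s}>(1-\tfrac{1}{2\alpha})\Xe{s}\}$, on which $\Xe{s}<c_\alpha\Z{s}$ with $c_\alpha:=\tfrac{2\alpha}{2\alpha-1}$. On $G_s$ the two $\ds$-terms are of lower order: using $\ds\le\dt\le\dmax(\alpha)$, $\Xe{s}\le c_\alpha\Z{s}$, the lower bound $\Z{s}\ge(1-\sqrt\alpha)b_\sigma(\alpha)\dt$, and $2\alpha-1>0$ (so that $\dt^{2\alpha-1}$ stays bounded), one absorbs them into $C\Z{s}$, giving $B_s\le C_1\Z{s}$ and therefore the clean bound $\Phi_s^2\le C_1\sigma^2\Xe{s}^{2\alpha-1}\Z{s}$ on $G_s$, with $C_1$ independent of $\dt$.

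It follows that on $G_s$ the drift and diffusion contributions to the integrand combine into $\tfrac{q}{\Z{s}^{q+1}}\,h(\Xe{s})$, where $h(x)=-b(x)+\tfrac{(q+1)C_1\sigma^2}{2}x^{2\alpha-1}$. Here is the key point, which replaces the Gronwall step of the CIR case: since $h(0)=-b(0)<0$ and $h$ is continuous, there is $\delta>0$, independent of $\dt$, with $h<0$ on $[0,\delta)$. Where $h(\Xe{s})\le0$ the term is discarded; where $h(\Xe{s})>0$ one has $\Xe{s}\ge\delta$, hence $\Z{s}\ge\delta/c_\alpha$, and combining $h(x)\le C(1+x)$ with $\Z{s}\ge\Xe{s}/c_\alpha$ the integrand is bounded by the $\dt$-independent constant $qC\,c_\alpha^{q+1}(\delta^{-(q+1)}+\delta^{-q})$; intuitively, the dangerous factor $\Xe{s}^{2\alpha-1}$ is harmless exactly where $\Xe{s}$, hence $\Z{s}$, is small, because there the constant drift $b(0)>0$ dominates. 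On $G_s^c\cap\{s<\stalpha\}$ I would use $\Z{s}\ge(1-\sqrt\alpha)b_\sigma(\alpha)\dt$ so that $\Z{s}^{-(q+2)}\le C\dt^{-(q+2)}$, together with the finiteness of the moments of $\Xe{s}$ and $\Phi_s$ (Lemma \ref{lem:finitenessOfTheMomentsOfXbar}) and the estimate $\P(G_s^c)\le\exp(-\gamma/\dt)$ of Lemma \ref{lem:boundPZlessFracX}; a Hölder splitting then bounds this contribution by $C\dt^{-(q+2)}\exp(-\gamma'/\dt)$, which is bounded uniformly for $\dt\le\dmax(\alpha)$, so collecting the three pieces yields $\E[\Z{t\wedge\stalpha}^{-q}]\le C(1+x_0^{-q})$ uniformly in $t\in[0,T]$. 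The main obstacle is precisely the term $\Xe{s}^{2\alpha-1}\Z{s}^{-(q+1)}$: bounding $\Xe{s}^{2\alpha-1}$ by a constant, or linearizing it through \eqref{eq:boundForBadTerm}, would force a lower bound on $b(0)$ growing linearly with $q$, in contradiction with the $q$-independent hypothesis $b(0)>2\alpha(1-\alpha)^2\sigma^2$, so it is the zero-crossing argument for $h$, rather than a Gronwall inequality, where the work of the proof concentrates.
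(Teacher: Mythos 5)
Your proof is correct, and its skeleton coincides with the paper's: It\^o's formula for $\Z{t\wedge\stalpha}^{-q}$, the factorization of the squared diffusion coefficient through the expression of $\Z{s}$ (your identity for $B_s$ is exactly the mechanism behind the paper's bound $\Phi_s^2\leq\sigma^2\Xe{s}^{2\alpha}+2\alpha\sigma^2\Xe{s}^{2\alpha-1}\Z{s}$), the splitting over the comparability event $\{\Z{s}>(1-\tfrac{1}{2\alpha})\Xe{s}\}$, and the exponential estimate of Lemma \ref{lem:boundPZlessFracX} combined with the floor $\Z{s}\geq(1-\sqrt{\alpha})b_\sigma(\alpha)\dt$ to kill the bad event. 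Where you genuinely diverge is the key cancellation step. The paper converts every occurrence of $\Xe{s}$ on the good event into a multiple of $\Z{s}$, reducing the integrand to $-qb(0)\Z{s}^{-(q+1)}+A\,\Z{s}^{-(q+2(1-\alpha))}+B\,\Z{s}^{-q}$, observes that $z\mapsto A_1 z^{-(q+2(1-\alpha))}-A_2 z^{-(q+1)}$ is bounded above because $q+2(1-\alpha)<q+1$ when $\alpha>\tfrac12$, and then closes with Gronwall on the remaining $\Z{s}^{-q}$ term. You instead keep the factor $\Xe{s}^{2\alpha-1}$, form $h(x)=-b(x)+\tfrac{(q+1)C_1\sigma^2}{2}x^{2\alpha-1}$, and use $h(0)=-b(0)<0$ to get a zero-free neighbourhood $[0,\delta)$; on $\{h>0\}$ the comparability $\Z{s}\geq\Xe{s}/c_\alpha\geq\delta/c_\alpha$ makes the whole integrand bounded by a $\dt$-independent constant, so you dispense with Gronwall altogether. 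The two mechanisms exploit $2\alpha-1>0$ in equivalent ways (the paper through the exponent gap, you through $x^{2\alpha-1}\to0$ as $x\to0$), and both correctly explain why no $q$-dependent condition on $b(0)$ is needed here, in contrast with the CIR case. Your version is marginally more self-contained (no Gronwall, and the positivity of $\Z{}$ on $[0,\stalpha)$ is spelled out rather than used tacitly); the paper's version is slightly shorter because the unconditional quadratic-variation bound, obtained via \eqref{eq:boundForBadTerm} and the hypothesis $b(0)>2\alpha(1-\alpha)^2\sigma^2$, lets it avoid your $\dt^{2\alpha-1}$ absorption of the $\ds$-terms. Either route is acceptable.
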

\begin{proof}
Let us call $\dW_s:=W_s-W_{\eta(s)}$. By Ito's formula and the Lipschitz property of $b$,
\begin{multline}\label{eq:ItoFormulaNegativeMoments}
\E\left[\Z{t\wedge \stalpha}^{\,-q}\right]\leq \frac{1}{x_0^q}-q\E\left[\int_0^{t\wedge \stalpha}{\frac{b(0)}{\Z{s}^{q+1}}ds}\right] +qK\E\left[\int_0^{t\wedge \stalpha}{\frac{\Xe{s}}{\Z{s}^{q+1}}ds}\right]\\
 + \frac{q(q+1)}{2}\E\left[ \int_0^{t\wedge \stalpha}{\frac{1}{\Z{s}^{q+2}}\left(\sigma\Xe{s}^{\alpha}+{\alpha\sigma^2\Xe{s}^{2\alpha-1}}\dW_s\right)^2  ds}\right].
\end{multline}
Following the same ideas to prove \eqref{eq:boundCuadraticVariationOfZbarCIR},  for all $s\in[0,t]$  we can easily prove that almost surely  
\begin{equation*}
\left(\sigma\Xe{s}^{\alpha}+{\alpha\sigma^2\Xe{s}^{2\alpha-1}}\dW_s\right)^2 \leq \sigma^2\Xe{s}^{2\alpha} + 2\alpha\sigma^2\Xe{s}^{2\alpha-1}\Z{s}.
\end{equation*}
Introducing this bound in the previous inequality, we have
\begin{equation}\label{eq:ItoNegativeMomentsAfterQV}
\begin{split}
\E\left[ \Z{t\wedge \stalpha}^{\,-q}\right]\leq& \frac{1}{x_0^q}-q\E\left[\int_0^{t\wedge \stalpha}{\frac{b(0)}{\Z{s}^{q+1}}ds}\right]+qK\E\left[\int_0^{t\wedge \stalpha}{\frac{\Xe{s}}{\Z{s}^{q+1}}ds}\right]\\
& + \frac{q(q+1)}{2}\sigma^2\E\left[\int_0^{t\wedge \stalpha}{\frac{1}{\Z{s}^{q+2}}\left\{\Xe{s}^{2\alpha} + 2\alpha\Xe{s}^{2\alpha-1}\Z{s}\right\} ds}\right].
\end{split}
\end{equation}
since for $r\in\{1,2\alpha-1,2\alpha\}$, 
\begin{align*}
\left(\frac{\Xe{s}}{\Z{s}}\right)^r
&\leq \left(\frac{\Xe{s}}{\Z{s}}\right)^r\ind{\{\Z{s}\leq \Xe{s}(1-\tfrac{1}{2}\alpha)\}}+\left(\frac{2\alpha}{2\alpha-1}\right)^r.
\end{align*}
we get 
\begin{align*}
\E\left[\Z{t\wedge \stalpha}^{\,-q}\right] \leq  & \frac{1}{x_0^q}-q\E\left[\int_0^{t\wedge \stalpha}{\frac{b(0)}{\Z{s}^{q+1}}ds}\right]+ \frac{2\alpha}{2\alpha-1}qK\E\left[\int_0^{t\wedge \stalpha}{\frac{1}{\Z{s}^{q}}ds}\right]\\
& \quad + \frac{q(q+1)}{2}\sigma^2\frac{(2\alpha)^{2\alpha+1}}{(2\alpha-1)^{2\alpha}}\E\left[\int_0^{t\wedge \stalpha}{\frac{1}{\Z{s}^{q+2(1-\alpha)}}  ds}\right]\\
&\quad + C\E\left[\int_0^{t\wedge \stalpha}{\left\{\frac{\Xe{s}}{\Z{s}^{q+1}}+\frac{\Xe{s}^{2\alpha}}{\Z{s}^{q+2}} + \frac{\Xe{s}^{2\alpha-1}}{\Z{s}^{q+1}}\right\}\ind{\{\Z{s}\leq \Xe{s}(1-\tfrac{1}{2}\alpha)\}} ds}\right].
\end{align*} 
The last term in the previous inequality is bounded because of the definition of $\stalpha$ and the Lemma \ref{lem:boundPZlessFracX}. Indeed,
\begin{align*}
\E\Big[\int_0^{t\wedge \stalpha}&\left\{\frac{\Xe{s}}{\Z{s}^{q+1}}+\frac{\Xe{s}^{2\alpha}}{\Z{s}^{q+2}} + \frac{\Xe{s}^{2\alpha-1}}{\Z{s}^{q+1}}\right\}\ind{\{\Z{s}\leq \Xe{s}(1-\tfrac{1}{2}\alpha)\}}  ds\Big]  \\
&\leq\frac{C}{\dt^{q+2}}\int_0^T\sqrt{\E\left[\left(\Xe{s}+\Xe{s}^{2\alpha}+\Xe{s}^{2\alpha-1}\right)^2\right]\P\left(\Z{s}\leq \Xe{s}\Big(1-\frac{1}{2\alpha}\Big)\right)}ds\\
&\leq \frac{C}{\dt^{q+2}}\exp\left(\frac{\gamma}{\dt}\right)\leq C.
\end{align*}
So, \eqref{eq:ItoNegativeMomentsAfterQV} becomes
\begin{equation}\label{eq:negativeMomentsAlpha}
\begin{split}
\E\left[\Z{t\wedge \stalpha}^{\,-q}\right]\leq& \frac{1}{x_0^q}-q\E\left[\int_0^{t\wedge \stalpha}{\frac{b(0)}{\Z{s}^{q+1}}ds}\right]+\frac{2\alpha}{2\alpha-1}qK\E\left[\int_0^{t\wedge \stalpha}{\frac{1}{\Z{s}^{q}}ds}\right]\\
& + \frac{q(q+1)}{2}\sigma^2\frac{(2\alpha)^{2\alpha+1}}{(2\alpha-1)^{2\alpha}}\E\left[\int_0^{t\wedge \stalpha}{\frac{1}{\Z{s}^{q+2(1-\alpha)}}  ds}\right] + C.
\end{split}
\end{equation} 
But, for any $A_1,A_2>0$, the mapping $z \mapsto \frac{A_1}{z^{q+2(1-\alpha)}}- \frac{A_2}{z^{q+1}}$ 
is bounded, and \eqref{eq:negativeMomentsAlpha} becomes
\begin{equation*}
\E\left[\Z{t\wedge \stalpha}^{\,-q}\right]\leq \frac{1}{x_0^q}+2qK\E\left[\int_0^{t\wedge \stalpha}{\frac{1}{\Z{s}^{q}}ds}\right]+ C,
\end{equation*} 
from where we can conclude applying Gronwall's Lemma.
\end{proof}

\subsection{On the corrected local error process}\label{sec:CorrectedLocalErrorProof}
\begin{proof}[Proof of Lemma \ref{lem:CorrectedLocalError}]
Let us recall the notation in the proof of the main Theorem
\begin{equation} \label{defDeltaSigma}
\dS_s(\X):= \sigma \X_s^\alpha-\sigma \Xe{s}^\alpha-\alpha\sigma^2\Xe{s}^{2\alpha-1}(W_s-W_{\eta(s)}),  
\end{equation}
and also introduce
$$S_{u\wedge \stalpha}(\X):=\sigma \Xe{s\wedge \stalpha}^\alpha+\alpha\sigma^2\Xe{s\wedge \stalpha}^{2\alpha-1}(W_{u\wedge \stalpha}-W_{\eta(s\wedge \stalpha)}),$$
and $\dW_s:= (W_{s}-W_{\eta(s)})$.

Using Lemma \ref{lem:ProbabilityOfThetaLessThanT}, and the finiteness of the moments of $\dS$, is easy to prove
$$\E\left[\dS_s(\X)^{2p}\right] \leq  C\E\left[ \dS_{s\wedge \stalpha}(\X)^{2p}\ind{\{\stalpha\geq\eta(s)\}} \right] +C\dt^{2p}.$$
Then we only have to prove
\begin{equation}
\E\left[  \dS_{s\wedge \stalpha}(\X)^{2p}\ind{\{\stalpha\geq\eta(s)\}} \right]  \leq C\dt^{2p}.
\end{equation}
Notice that $\X_{s\wedge \stalpha} = \Z{s\wedge \stalpha}$, so
$$\dS_{s\wedge \stalpha}(\X)\ind{\{\stalpha\geq\eta(s)\}} =  \left\{\sigma \Z{s\wedge \stalpha}^\alpha-\sigma \Xe{s\wedge \stalpha}^\alpha-\alpha\sigma^2\Xe{s\wedge \stalpha}^{2\alpha-1}\dW_{s\wedge \stalpha} \right\} \ind{\{\stalpha\geq\eta(s)\}}.$$
Then applying Itô's Formula to the function $\sigma|x|^\alpha$ which is $\mathcal{C}^2$ for $x\geq C\dt$, we have
\begin{equation}\label{eq:ItoCeAlpha}
\begin{split}
\dS_{s\wedge \stalpha}(\X)\ind{\{\stalpha\geq\eta(s)\}} = & \Big\{\int_{\eta(s\wedge \stalpha)}^{s\wedge \stalpha}{ \left( \frac{\alpha\sigma}{\Z{u\wedge \stalpha}^{1-\alpha}}- \frac{\alpha\sigma}{\Xe{s\wedge \stalpha}^{1-\alpha}}  \right)\sigma \Xe{s\wedge \stalpha}^\alpha dW_u}\\
&\qquad+ \int_{\eta(s\wedge \stalpha)}^{s\wedge \stalpha}{\frac{ \alpha^2\sigma^3 \Xe{s\wedge \stalpha}^{2\alpha-1}}{\Z{u\wedge \stalpha}^{1-\alpha}}\dW_{u\wedge\stalpha} dW_u}  \\
&\qquad+\int_{\eta(s\wedge \stalpha)}^{s\wedge \stalpha}{\frac{\alpha\sigma}{\Z{u\wedge \stalpha}^{1-\alpha}}b(\Xe{s\wedge \stalpha}) du}\\
&\qquad- \int_{\eta(s\wedge \stalpha)}^{s\wedge \stalpha}{\frac{1}{2}\frac{\alpha(1-\alpha)\sigma}{\Z{u\wedge \stalpha}^{2-\alpha}}S_{u\wedge \stalpha}(\X)^2 du}\Big\}\ind{\{\stalpha\geq\eta(s)\}}\\
&=: J_1+J_2+J_3-J_4.
\end{split}
\end{equation}
Notice that on the event $\{\eta(s)\leq\stalpha\}$ we have $\eta(s)=\eta(s\wedge\stalpha)$, and then
\begin{equation*}
 \E[ |J_1|^{2p}]  
= \E\left[\left|\int_{\eta(s)}^{s\wedge \stalpha}{ \ind{\{\stalpha\geq\eta(s)\}} \left( \frac{\alpha\sigma}{\Z{u\wedge \stalpha}^{1-\alpha}}- \frac{\alpha\sigma}{\Xe{s\wedge \stalpha}^{1-\alpha}}  \right)\sigma \Xe{s\wedge \stalpha}^\alpha dW_u} \right|^{2p}\right].
\end{equation*}
By the Burkholder-Davis-Gundy inequality,  
there exists a constant $C_p$ depending only on $p$ such that
\begin{multline*}
\E\left[\left|\int_{\eta(s)}^{s\wedge \stalpha}{ \ind{\{\stalpha\geq\eta(s)\}} \left( \frac{\alpha\sigma}{\Z{u\wedge \stalpha}^{1-\alpha}}- \frac{\alpha\sigma}{\Xe{s\wedge \stalpha}^{1-\alpha}}  \right)\sigma \Xe{s\wedge \stalpha}^\alpha dW_u} \right|^{2p}\right] \\
\leq(\alpha\sigma^2)^{2p}C_p \E\left[\left(\int_{\eta(s)}^{s\wedge \stalpha}{ \left( \frac{\Xe{s\wedge \stalpha}^{1-\alpha}-\Z{u\wedge \stalpha}^{1-\alpha}}{\Z{u\wedge \stalpha}^{1-\alpha}\Xe{s\wedge \stalpha}^{1-\alpha}} \right)^2\ \Xe{s\wedge \stalpha}^{2\alpha} \ind{\{\stalpha\geq\eta(s)\}} du}\right)^{p}\right],
\end{multline*}
observing that the integrand in the right-hand side is positive. And  we have
\begin{align*}
\E[ |J_1|^{2p}] &\leq  (\alpha\sigma^2)^{2p}C_p \E\left[\left(\int_{\eta(s)}^{s}{ \left( \frac{\Xe{s\wedge \stalpha}^{1-\alpha}-\Z{u\wedge \stalpha}^{1-\alpha}}{\Z{u\wedge \stalpha}^{1-\alpha}} \right)^2\ \Xe{s\wedge \stalpha}^{4\alpha-2} \ind{\{\stalpha\geq\eta(s)\}} du}\right)^{p}\right]\\
&\leq  C \E\left[\left(\int_{\eta(s)}^{s}\left( \frac{\left(\Xe{s\wedge \stalpha}^{1-\alpha}-\Z{u\wedge \stalpha}^{1-\alpha}\right)}{\Z{u\wedge \stalpha}^{1-\alpha}} \frac{\left(\Xe{s\wedge \stalpha}^{\alpha}+\Z{u\wedge \stalpha}^{\alpha}\right)}{\Z{u\wedge \stalpha}^{\alpha}} \right)^2   \Xe{s\wedge \stalpha}^{4\alpha-2} \ind{\{\stalpha\geq\eta(s)\}} du\right)^{p}\right].
\end{align*}
But for $x,y\geq0$, and $\beta\in[0,\tfrac{1}{2})$ it holds $|x^\beta-y^\beta|(x^{1-\beta}+y^{1-\beta})\leq 2|x-y|$, so
\begin{align*}
\E[ |J_1|^{2p}] &\leq C\E\left[\left(\int_{\eta(s)}^{s}{  \left(\Xe{s\wedge \stalpha}-\Z{u\wedge \stalpha}\right)^2\ind{\{\stalpha\geq\eta(s)\}} \frac{\Xe{s\wedge \stalpha}^{4\alpha-2}}{\Z{u\wedge \stalpha}^{2}}\   du}\right)^{p}\right]\\
&\leq C\dt^{p-1} \int_{\eta(s)}^{s}{ \E\left[ \left(\Xe{s\wedge \stalpha}-\Z{u\wedge \stalpha}\right)^{2p} \ind{\{\stalpha\geq\eta(s)\}} \frac{ \Xe{s\wedge \stalpha}^{2p(2\alpha-1)}}{\Z{u\wedge \stalpha}^{2p}} \right]du}.
\end{align*}
Let $a>1$. Thanks to H\"older's inequality we have
\begin{multline*}
\E\left[ \left(\Xe{s\wedge \stalpha}-\Z{u\wedge \stalpha}\right)^{2p} \ind{\{\stalpha\geq\eta(s)\}} \frac{ \Xe{s\wedge \stalpha}^{2p(2\alpha-1)}}{\Z{u\wedge \stalpha}^{2p}} \right] \\ \leq
 \left(\E\left[ \left[\Xe{s\wedge \stalpha}-\Z{u\wedge \stalpha}\right]^{\frac{2ap}{(a-1)}} \ind{\{\stalpha\geq\eta(s)\}}  \right]\right)^{1-1/a}\left(\E\left[\frac{ \Xe{s\wedge \stalpha}^{2ap(2\alpha-1)}}{\Z{u\wedge \stalpha}^{2ap}} \right]\right)^{1/a}.
\end{multline*}
We  use Lemma \ref{lem:localErrorOfTheScheme} to bound the Local Error of the scheme
$$\E\left[ \left(\Xe{s\wedge \stalpha}-\Z{u\wedge \stalpha}\right)^{\frac{2ap}{(a-1)}} \ind{\{\stalpha\geq\eta(s)\}}  \right] \leq  C \dt^{\frac{ap}{(a-1)}},$$
On the other hand, when $\alpha>\tfrac{1}{2}$, we have control of any negative moment of $\Z{u\wedge\stalpha}$, so
$$\E\left[\frac{ \Xe{s\wedge \stalpha}^{4ap(2\alpha-1)}}{\Z{u\wedge \stalpha}^{2ap}} \right] \leq \sqrt{\E\left[\Xe{s\wedge \stalpha}^{2ap(2\alpha-1)} \right]\E\left[\frac{ 1}{\Z{u\wedge \stalpha}^{4ap}} \right]}\leq C,$$
whereas when $\alpha=\tfrac{1}{2}$,  we choose $a>1$, such that
${2b(0)}/{\sigma^2}> 3(2ap+1),$
so we have control of the $2ap$-th negative moment of $\Z{u\wedge\stalpha}$. And then
$$\E\left[\frac{ \Xe{s\wedge \stalpha}^{4ap(2\alpha-1)}}{\Z{u\wedge \stalpha}^{2ap}} \right] =\E\left[\frac{1}{\Z{u\wedge \stalpha}^{2ap}} \right] \leq C.$$
So, in any case we have
$$\E\left[ \left(\Xe{s\wedge \stalpha}-\Z{u\wedge \stalpha}\right)^{2p} \ind{\{\stalpha\geq\eta(s)\}} \frac{ \Xe{s\wedge \stalpha}^{2p(2\alpha-1)}}{\Z{u\wedge \stalpha}^{2p}} \right] \leq C\dt^p.$$
And then we can conclude $\E[ |J_1|^{2p}] \leq C\dt^{2p}.$

Using the same arguments for $\E[ |J_2|^{2p}]$, we have
\begin{align*}
\E[ |J_2|^{2p}] &\leq C_p \E\left[\left(\int_{\eta(s)}^{s\wedge \stalpha}{\ind{\{\stalpha\geq\eta(s)\}}\frac{ \alpha^2\sigma^6 \Xe{s}^{2(2\alpha-1)}}{\Z{u}^{2(1-\alpha)}}\dW_{u}^2 du}\right)^{p} \right] \\
&\leq C\dt^{p-1}\int_{\eta(s)}^s{\E\left[\ind{\{\stalpha\geq\eta(s)\}}\frac{ \Xe{s}^{2(2\alpha-1)p}}{\Z{u\wedge\stalpha}^{2(1-\alpha)p}}\dW_{u\wedge\stonehalf}^{2p} \right] du}  \\
&\leq C\dt^{p-1}\int_{\eta(s)}^s{\sqrt{\E\left(\frac{ \Xe{s}^{4(2\alpha-1)p}}{\Z{u\wedge \stalpha}^{4(1-\alpha)p}} \right)}\sqrt{\E\left(\ind{\{\stalpha\geq\eta(s)\}}\dW_{u\wedge\stonehalf}^{4p} \right)}du} \leq C\dt^{2p}. 
\end{align*}

To bound $\E[| J_3|^{2p}]$ we proceed as follows
\begin{align*}
\E[ |J_3|^{2p}]   &= \E\left[\left(\int_{\eta(s)}^{s\wedge \stalpha}{\ind{\{\stalpha\geq\eta(s)\}}\frac{\alpha\sigma}{\Z{u\wedge \stalpha}^{1-\alpha}}b(\Xe{s\wedge \stalpha}) du}\right)^{2p}\right]\\
&\leq (\alpha\sigma)^{2p}\dt^{2p-1}\int_{\eta(s)}^{s}{\E\left(\frac{1}{\Z{u\wedge \stalpha}^{2(1-\alpha)p}}b\left(\Xe{s\wedge \stalpha}\right)^{2p}\right) du}\\
&\leq (\alpha\sigma)^{2p}\dt^{2p-1}\int_{\eta(s)}^{s}{\E\left(\frac{1}{\Z{u\wedge \stalpha}^{2p}}\right)^{1-\alpha} \E\left(b\left(\Xe{s\wedge \stalpha}\right)^{\frac{2p}{\alpha}}\right)^\alpha du}\\
&\leq C\dt^{2p}.
\end{align*}

Finally for $\E[ |J_4|^{2p}]$ we consider first $\alpha>\tfrac{1}{2}$. In this case we have control of any negative moment of $\Z{u\wedge\stalpha}$.  So proceeding as before
\begin{align*}
\E[| J_4|^{2p}]  &= \E\left[\left(\int_{\eta(s)}^{s\wedge \stalpha}{\ind{\{\stalpha\geq\eta(s)\}}\frac{1}{2}\frac{\alpha(1-\alpha)\sigma}{\Z{u\wedge \stalpha}^{2-\alpha}}S_{u\wedge \stalpha}(\X)^{2} du}\right)^{2p}\right] \\
&\leq C\dt^{2p-1} \int_{\eta(s)}^{s}{\E\left(\frac{1}{\Z{u\wedge \stalpha}^{2p(2-\alpha)}}S_{u\wedge \stalpha}(\X)^{4p} \right)du}\\
&\leq C\dt^{2p}.
\end{align*}
The case $\alpha = \tfrac{1}{2}$ is a little more delicate. Let us recall the identity used in the  proof of \eqref{eq:boundCuadraticVariationOfZbarCIR} 
$$\left(\sigma\Xe{s\wedge\stonehalf}^{1/2}+\frac{\sigma^2}{2}\dW_{u\wedge\stonehalf}\right)^2
= \sigma^2\Z{u\wedge\stonehalf}
 - \sigma^2\left(b(\Xe{s\wedge\stonehalf})-  \frac{\sigma^2}{4} \right)(u\wedge\stonehalf - \eta(s\wedge\stonehalf)),$$
so, we have from the definition of $\stonehalf$
\begin{align*}
S_{u\wedge \stonehalf}(\X)^{4p}  = \Big(\sigma\Xe{s\wedge\stonehalf}^{\alpha}+\frac{\sigma^2}{2}\dW_{u\wedge\stonehalf}\Big)^{4p}
&\leq C\left( \Z{u\wedge\stonehalf}^{2p}
 + \left(b(\Xe{s\wedge\stonehalf})-  \frac{\sigma^2}{4} \right)^{2p}\dt^{2p}\right)\\
 &\leq C\left( 1
 + \left(b(\Xe{s\wedge\stonehalf})-  \frac{\sigma^2}{4} \right)^{2p}\right)\Z{u\wedge\stonehalf}^{2p}.
\end{align*}
Then
\begin{align*}
\E[ |J_4|^{2p}]  
& \leq  C\dt^{2p-1} \int_{\eta(s)}^{s}{\E\left[\frac{1}{\Z{u\wedge \stalpha}^{3p}}\left( \sigma\Xe{s\wedge\stonehalf}^{\alpha}+\frac{\sigma^2}{2}\dW_{u\wedge\stonehalf}   \right)^{4p} \right]du}\\
& \leq   C \dt^{2p-1} \int_{\eta(s)}^{s}{\E\left[\frac{1}{\Z{u\wedge \stalpha}^{p}}\left( 1
  + \left(b(\Xe{s\wedge\stonehalf})-  \frac{\sigma^2}{4} \right)^{2p}\right) \right]du}\\
& \leq C \dt^{2p-1} \int_{\eta(s)}^{s}{\sqrt{\E\left(\frac{1}{\Z{u\wedge \stalpha}^{2p}} \right)}\sqrt{\E\left(  1
 + \left(b(\Xe{s\wedge\stonehalf})-  \frac{\sigma^2}{4} \right)^{4p}\right)}du}\\
& \leq C \dt^{2p}.
\end{align*}
So for every $\alpha\in[\tfrac{1}{2},1)$, 
$\E[J_4^{2p}]\leq C\dt^{2p}$,  from where we conclude on the Lemma.
\end{proof}

\paragraph{Acknowledgements. }
{\it The authors are  grateful to the anonymous Referees for their useful suggestions and comments on the early version of this work. 

\medskip
The second author acknowledges the following institutions for their financial support:  Proyecto Mecesup UCH0607, the Direcci{\'o}n de Postgrado y Post{\'i}tulo de la Vicerrector{\'i}a de Asuntos Acad{\'e}micos de la Universidad de Chile, the Instituto Franc{\'e}s de Chile - Embajada de Francia en Chile, and the Center for Mathematical Modeling CMM.
}

\appendix
\section{Appendix} \label{sec:appendix}
\begin{proof}[Proof of Lemma \ref{lem:finitenessOfTheMomentsOfXbar}] 
Let us recall the notations $\ds = s-\eta(s)$,  and $\dW_s= W_s-W_{\eta(s)}$. Let us  define $\tau_m = \inf\{t\geq0:\X_t\geq m\}$. Then by Itô's Formula,
 Young's inequality and the Lipschitz property of $b$, we have
\begin{equation}\label{eq:eq1}\begin{split}
\E[ \X_{t\wedge \tau_m}^{2p}] & \leq   x_0^{2p} + C\E\left[ \int_0^{t\wedge \tau_m}{\X_s^{2p} + C+ \Xe{s}^{2p}  ds}\right] \\
 &\quad+C\E\left[\int_0^{t\wedge \tau_m}{ \left(\sigma\Xe{s}^\alpha+\alpha\sigma^2\Xe{s}^{2\alpha-1}\dW_s\right)^{2p} ds}\right].
\end{split}
\end{equation}
From the definition of $\X$, a straightforward computation shows that for all $s\in[0,t]$ almost surely 
\begin{equation*}\label{eqMomentOfXbar1}
\X_s^{2p}    \leq C \left(1 +   \Xe{s}^{2p}+ \dW_s^{\frac{2p}{1-\alpha}}+\left(\dW_s^2 -\ds \right)^{\frac{2p}{2(1-\alpha)}} \right).
\end{equation*}
Putting this in \eqref{eq:eq1}, we have
\begin{align*}
\E[ \X_{t\wedge \tau_m}^{2p}] 
&  \leq  x_0^{2p} + C\E\left[ \int_0^{t\wedge \tau_m}{ 1+ \Xe{s}^{2p}  + \left(\sigma\Xe{s}^\alpha+\alpha\sigma^2\Xe{s}^{2\alpha-1}\dW_s\right)^{2p} ds}\right] \\
&\quad +  C\E\left[ \int_0^{t\wedge \tau_m}{\dW_s^{\frac{2p}{1-\alpha}}+\left(\dW_s^2 -(s-\eta(s)) \right)^{\frac{2p}{2(1-\alpha)}}ds}\right] \\
&  \leq x_0^{2p} + C\E\left[\int_0^{t\wedge \tau_m}{ 1+ \Xe{s}^{2p}  + \Xe{s}^{2p\alpha}+\Xe{s}^{2p(2\alpha-1)}\dW_s^{2p} ds}\right] \\
& \quad +  C\int_0^{T}{\E\left[ \dW_s^{\frac{2p}{1-\alpha}}\right]+\E\left[ \left(\dW_s^2 -\ds \right)^{\frac{2p}{2(1-\alpha)}}\right]ds}.
\end{align*}
Since $\alpha\in[\tfrac{1}{2},1)$ we have $\Xe{s}^{2p\alpha} \leq 1+\Xe{s}^{2p},$ and then, using Young's Inequality and the finiteness of the moments of Gaussian random variables, we conclude
\begin{align*}
\E[ \X_{t\wedge \tau_m}^{2p}]&\leq   Cx_0^{2p} + C\E\left[\int_0^{t\wedge \tau_m}{ \Xe{s}^{2p}ds}  \right] 
\leq Cx_0^{2p} + C\int_0^{t}{\sup_{u\leq s}\E[ \X_{u\wedge \tau_m}^{2p}] ds}. 
\end{align*}
Since the right-hand side is increasing, we can take supremum in the left-hand side  and from here, applying Gronwall's inequality, and  taking $m\to \infty$   we get
$$\sup_{t\leq T}\E[ \X_{t}^{2p}] \leq Cx_0^{2p}.$$
From here, following  standard argument using Burkholder-Davis-Gundy inequality we can conclude on Lemma~\ref{lem:finitenessOfTheMomentsOfXbar}. 
\end{proof}

\begin{proof}[Proof of lemma \ref{LemmaBossyDiop}.]
First, from the definition of $\X_{t_k}$ we have
$$\X_{t_k}\geq \X_{t_{k-1}}+ ({b_\sigma}(1/2) - K\X_{t_{k-1}})\dt + \sigma\sqrt{\X_{t_{k-1}}}\left(  W_{t_{k}}-W_{t_{k-1}}\right),$$
then
$$ \E\exp\left(  -\mu_0\X_{t_k} \right) \leq \E\exp\Big( -\mu_0\Big[\X_{t_{k-1}}+
 ({b_\sigma}(1/2) - K\X_{t_{k-1}})\dt + \sigma\sqrt{\X_{t_{k-1}}}\left(  W_{t_{k}}-W_{t_{k-1}}\right)\Big]\Big),$$
where $\mu_0 = 1/\gamma\sigma^2\dt$. From here, just as in Lemma 3.6 in \cite{BOSSY:2013fk}, we conclude
\begin{equation}\label{firstBoundForExponentialMomentAlpha1/2}
 \E\exp\left(  -\mu_0\X_{t_k} \right) \leq \exp\left(  -\mu_0{b_\sigma}(1/2)\dt\right)
 \E\exp\left( -\mu_0\X_{t_{k-1}}\left[1- K\dt - \frac{\sigma^2\dt}{2}\mu_0\right]\right).
\end{equation}
Then if we introduce the same sequence $(\mu_j)_j\geq0$ of Lemma 3.6 in \cite{BOSSY:2013fk}, given by
\begin{displaymath}
\mu_j = \left\{
\begin{array}{ll}
\frac{1}{\gamma\sigma^2\dt},&j=0,\\
\mu_{j-1}\left[1- K\dt - \frac{\sigma^2\dt}{2}\mu_{j-1}\right],&j\geq1.
\end{array}
\right.
\end{displaymath}
We can repeat the proof in \cite{BOSSY:2013fk} and find out that if $\dt\leq 1/(2K)$ then, the sequence $(\mu_j)_j\geq0$ is nonnegative, decreasing and satisfies the following bound
$$\mu_j\geq \mu_1\left(  \frac{1}{1+\frac{\sigma^2}{2}\dt(j-1)\mu_0}\right)- K\left(  \frac{\dt(j-1)\mu_0}{1+\frac{\sigma^2}{2}\dt(j-1)\mu_0}\right),\;\forall j\geq1.$$
On the other hand making the same calculations to obtain \eqref{firstBoundForExponentialMomentAlpha1/2} we can get for any $j\in\{0,\ldots,k-1\}$,
\begin{equation*}
 \E\exp\left(  -\mu_j\X_{t_{k-j}} \right) \leq \exp\left(  -\mu_j{b_\sigma}(1/2)(\tfrac{1}{2})\dt\right)\E\exp\left( -\mu_j\X_{t_{k-j-1}}\left[1- K\dt - \frac{\sigma^2\dt}{2}\mu_{j+1}\right]\right),
 \end{equation*}
from where, by an induction argument we have
$$\E\left(  -\mu_0\X_{t_k}\right)\leq \exp\left(  -{b_\sigma}(1/2)\sum_{j=0}^{k-1}{\mu_j\dt}\right)\exp{\left( x_0\mu_k \right)}.$$
From here, and the bound for the sequence $(\mu_j)_j\geq0$, we have 
$$\E\left(  -\mu_0\X_{t_k}\right)\leq C\left(  \frac{\dt}{x_0}\right)^{\frac{2{b_\sigma}(1/2)}{\sigma^2}\left(  1-\frac{1}{2\gamma}\right)}.$$
From where we see immediately
$$\sup_{k=0,\ldots,N}\E\exp\left(  -\frac{\X_{t_k}}{\gamma\sigma^2\dt}\right)\leq C\left(  \frac{\dt}{x_0}\right)^{\frac{2{b_\sigma}(1/2)}{\sigma^2}\left(  1-\frac{1}{2\gamma}\right)}.\qedhere$$
\end{proof}


\begin{thebibliography}{10}

\bibitem{Alfonsi:2005aa}
A.~Alfonsi.
\newblock {On the discretization schemes for the {CIR} (and {B}essel squared)
  processes}.
\newblock {\em Monte Carlo Methods and Applications}, 11:355--384, December
  2005.

\bibitem{Alfonsi:2013aa}
A.~Alfonsi.
\newblock {Strong order one convergence of a drift implicit Euler Scheme:
  Application to CIR process.}
\newblock {\em Statistic and Probability Letters}, (83):602--607, 2013.

\bibitem{BERKAOUI:2008fk}
A.~Berkaoui, M.~Bossy, and A.~Diop.
\newblock {Euler sheme for {SDE}s with non-Lipschitz diffusion coeffcient :
  strong convergence.}
\newblock {\em ESAIM Probability and Statistics}, 12:1--11, 2008.

\bibitem{borodin2002handbook}
A.N. Borodin and P.~Salminen.
\newblock {\em {Handbook of Brownian Motion: Facts and Formulae}}.
\newblock Operator Theory, Advances and Applications. Birkh{\"a}user, 2002.

\bibitem{BOSSY:2013fk}
M.~Bossy and A.~Diop.
\newblock {Euler sheme for one dimensional {SDE}s with a diffusion coeffcient
  function of the form $|x|^a$, $a$ in $[1/2,1)$.}
\newblock {\em Preprint arXiv:1508.04573}, 2004.

\bibitem{Chassagneux:2015aa}
J.-F. Chassagneux, A.~Jacquier, and I.~Mihaylov.
\newblock {An explicit Euler scheme with strong rate of convergence for
  financial {SDE}s with non-Lipschitz coefficients}.
\newblock {\em Preprint}, 2015.

\bibitem{Cox:1985fk}
J.~Cox, J.~Ingersoll, and S.~Ross.
\newblock {A theory of the Term Structure of Interest Rates}.
\newblock {\em Econometrica}, 53(2):385--407, Mar. 1985.

\bibitem{Delbaen-Shirakawa_02}
F.~Delbaen and H.~Shirakawa.
\newblock {A Note on Option Pricing for the Constant Elasticity of Variance
  Model}.
\newblock {\em Asia-Pacific Financial Markets}, 9(2):85--99, 2002.

\bibitem{Giles:2008aa}
M.~B. Giles.
\newblock {Multilevel Monte Carlo Path Simulation}.
\newblock {\em Operations Research}, 56(3):607--617, 2008.

\bibitem{Giles:2008ab}
Mike Giles.
\newblock {Improved Multilevel Monte Carlo Convergence using the Milstein
  Scheme}.
\newblock In Alexander Keller, Stefan Heinrich, and Harald Niederreiter,
  editors, {\em Monte Carlo and Quasi-Monte Carlo Methods 2006}, pages
  343--358. Springer Berlin Heidelberg, Berlin, Heidelberg, 2008.

\bibitem{Higham:2015aa}
D.J. Higham.
\newblock {An Introduction to Multilevel Monte Carlo for Option Valuation}.
\newblock {\em Preprint}, 2015.

\bibitem{Kahl:2006aa}
C.~Kahl and P.~J{\"a}ckel.
\newblock {Fast strong approximation Monte Carlo schemes for stochastic
  volatility models}.
\newblock {\em Quantitative Finance}, 6(6):513--536, 2006.

\bibitem{Kahl:2006ab}
C.~Kahl and H.~Schurz.
\newblock {Balanced Milstein methods for ordinary {SDE}s}.
\newblock {\em Monte Carlo Methods and Applications}, 12(2):143--170, 2006.

\bibitem{lamberton1996introduction}
D.~Lamberton and B.~Lapeyre.
\newblock {\em {Introduction to Stochastic Calculus Applied to Finance, Second
  Edition}}.
\newblock Chapman \& Hall/CRC Financial Mathematics Series. Taylor \& Francis,
  1996.

\bibitem{Lions20071}
P.-L. Lions and M.~Musiela.
\newblock {Correlations and bounds for stochastic volatility models}.
\newblock {\em Annales de l'Institut Henri Poincare (C) Non Linear Analysis},
  24(1):1 -- 16, 2007.

\bibitem{Milstein:1974uq}
G.~N. Milstein.
\newblock {Approximate Integration of Stochastic Differential Equations}.
\newblock {\em Theory of Probability \& Its Applications}, 19:557--562, 1974.

\bibitem{Pardoux:1998aa}
E.~Pardoux.
\newblock {Backward Stochastic Differential Equations and Viscosity Solutions
  of Systems of Semilinear Parabolic and Elliptic PDEs of Second Order}.
\newblock In Laurent Decreusefond, Bernt {\O}ksendal, Jon Gjerde, and
  Ali~S{\"u}leyman {\"U}st{\"u}nel, editors, {\em Stochastic Analysis and
  Related Topics VI: Proceedings of the Sixth Oslo---Silivri Workshop Geilo
  1996}, pages 79--127. Birkh{\"a}user Boston, Boston, MA, 1998.

\bibitem{Revuz:1991aa}
D.~Revuz and M.~Yor.
\newblock {\em {Continuous Martingales and Brownian Motion}}.
\newblock Grundlehren der mathematischen Wissenschaften. Springer-Verlag Berlin
  Heidelberg, 1st edition, 1991.

\bibitem{Simmons:1972aa}
G.F. Simmons.
\newblock {\em {Differential equations: with applications and historical
  notes}}.
\newblock International series in pure and applied mathematics. McGraw-Hill,
  1972.

\bibitem{talay-96}
D.~Talay.
\newblock {Probabilistic numerical methods for partial differential equations:
  elements of analysis}.
\newblock In D.~Talay and L.~Tubaro, editors, {\em Probabilistic Models for
  Nonlinear Partial Differential Equations}, volume 1627 of {\em Lecture Notes
  in Mathematics}, pages 148--196. Springer-Verlag, CIME Lectures, 1996.

\end{thebibliography}
\end{document}